\newcommand\err{\operatorname{err}}
\newcommand\supp{\operatorname{supp}}
\newcommand\sign{\operatorname{sign}}
\renewcommand\Re{\operatorname{Re}}
\renewcommand\Im{\operatorname{Im}}
\renewcommand\d{\operatorname{d}}
\newcommand\R{\mathbb {R}}
\newcommand\E{\mathbb {E}}
\renewcommand\P{\mathbb {P}}
\newcommand\C{\mathbb {C}}
\newcommand\Var{\textbf{Var }}
\newcommand\Cov{\textbf{Cov }}
\newcommand{\ep}{\varepsilon}
\newcommand{\mj}{\mathfrak J}
\newcommand{\trun}{\text{trun}}
\theoremstyle{plain}
  \newtheorem{theorem}{Theorem}[section]
  \newtheorem{corollary}[theorem]{Corollary}
  \newtheorem{lemma}[theorem]{Lemma}
  \newtheorem{proposition}[theorem]{Proposition}
\theoremstyle{definition}
  \newtheorem{remark}[theorem]{Remark}
\def\Xint#1{\mathchoice
	{\XXint\displaystyle\textstyle{#1}}%
	{\XXint\textstyle\scriptstyle{#1}}%
	{\XXint\scriptstyle\scriptscriptstyle{#1}}%
	{\XXint\scriptscriptstyle\scriptscriptstyle{#1}}%
	\!\int}
\def\XXint#1#2#3{{\setbox0=\hbox{$#1{#2#3}{\int}$ }
		\vcenter{\hbox{$#2#3$ }}\kern-.6\wd0}}
\def\dashint{\Xint-}
\begin{document}

\title{Random polynomials: central limit theorems for the real roots}

\author{Oanh Nguyen}
\address{Department of Mathematics, Princeton University, Princeton, NJ 08544, USA}
\address{Department of Mathematics, University of Illinois at Urbana--Champaign, Urbana, IL 61801, USA}
\email{onguyen@princeton.edu, onguyen@illinois.edu}

\author{Van Vu}
\address{Department of Mathematics, Yale University, New Haven, CT 06520, USA}
\email{van.vu@yale.edu}
\thanks{This work is partially supported by VIASM (Vietnam); O. Nguyen is supported by NSF DMS 1954174; V. Vu is partially supported by NSF  AWD0000777 and AWDD02154}
\begin{abstract}
The number of real roots has been a central subject in the theory of random polynomials and random functions since the fundamental papers of Littlewood-Offord and Kac in the 1940s. 
The main task here  is to determine the limiting distribution of  this random variable.

In 1974, Maslova famously proved a central limit theorem (CLT)  for the number of real roots of Kac polynomials. It has  remained   the only limiting theorem available for the 
number of real roots for more than four decades. 

In this paper,  using  a new approach, we derive a  general  CLT for the number of real roots of a large class of random polynomials with coefficients growing 
polynomially. Our result both generalizes and strengthens Maslova's theorem. 

\end{abstract}

\maketitle

\section{Introduction}\label{sec:intro}
Random polynomials, so simple to define but difficult to understand,  have attracted  generations of mathematicians. 
Typically, a random (algebraic) polynomial has the form 
$$ P_{n} (x) :=    c_n  \xi_n x^n + \dots  + c_1 \xi_1 x + c_0 \xi_0, $$ where $\xi_i$ are iid copies of an ({\it atom}) random variable $\xi$ with zero mean and unit variance, and $c_i$ 
are deterministic coefficients which may depend on both $n$ and $i$. Different  definitions of $c_i$  give rise to  different classes of random polynomials, which have  distinct behaviors. 

When $c_i =1$ for all $i$, the polynomial $P_{n}$ is often referred to as the \textit{Kac polynomial}.  Even this special class provides  great challenges,  which have led to rich literature  (see, for example, the books \cite{BS, Far} and the references therein).

Let  $N_n(\R)$ denote the number of real roots of $P_n$. A key problem in the theory 
of random polynomials is to understand the behavior of the random variable  $N_n(\R)$,  with $n$ tending to infinity. As a matter of fact, this is the problem that started the whole field, with fundamental works of Littlewood-Offord \cite{LO1, LO2, LO3} and Kac \cite{Kac1943average} from  the 1940s.

The first  natural  question is to determine the expectation of $N_n(\R)$. It took more than 20 years 
and the works of Kac \cite{Kac1943average}, Erd\H{o}s-Offord \cite{EO} and Ibragimov-Maslova \cite{Ibragimov1968average, Ibragimov1971average} to settle this problem for the Kac polynomial (the case $c_0 =\dots = c_n =1$). By now, the problem has been solved for many classes of random polynomials, with various choices for $c_i$ and under  very general assumptions for $\xi_i$ (see the introduction of \cite{nguyenvurandomfunction17}; also 
 \cite{HKPV, EK, MP, prosen1996, sodin2005zeroes, TVpoly, GKZ, pritsker1, pritsker2, DOV, soze1, soze2} and the references therein).

	The next, and more important, problem is to determine the variance and limiting distribution of $N_n(\R)$. 
This problem is much  harder  and our understanding is far from complete. In the 1970s, Maslova \cite{Mas2} proved the Central Limit Theorem (CLT) for the Kac polynomial. Here and later, $\xrightarrow{\text{ d }}$ means convergence in distribution; $\mathcal N(0,1)$ denotes
	the standard normal distribution, $\mu_{n}:=\E N_{n}(\R)$, $\sigma_{n}:=\sqrt {\Var N_n(\R)}$.

\begin{theorem} \label{theorem:Maslova} \cite{Mas1, Mas2}
	Let $\ep$ be a positive constant. Consider the \textbf{Kac polynomial} with the random variables $\xi_i$ being iid with mean zero, variance one, bounded $(2+\ep)$ moment, and $\P (\xi_i=0)=0$.   We have, as $n$ tends to infinity,
	\begin{equation*}
	\frac{N_n(\R)-\E N_n(\R)}{(\Var N_n(\R))^{1/2}}  \xrightarrow{\text{ d }} \mathcal N(0,1).
	\end{equation*}

	Furthermore, $\Var N_n(\R) = (K+o(1)) \log n $, where $K= \frac{4}{\pi} \left (1- \frac{2}{\pi}\right )$. 
 \end{theorem}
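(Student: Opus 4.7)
The plan is to reduce the counting of real roots to a sum over ``approximately independent'' blocks in neighborhoods of $\pm 1$, and then apply a classical CLT for independent sums. First, I would exploit the distributional symmetry $P_n(x) \leftrightarrow x^n P_n(1/x)$, a re-labeling of the iid coefficients, to equate (in distribution) the count of roots in $(0,1)$ with that in $(1, \infty)$, and a parallel argument handles roots near $-1$. These symmetries, together with the classical Littlewood--Offord/Kac estimates showing that the number of roots outside small neighborhoods of $\pm 1$ has bounded mean and variance, reduce the problem to proving the CLT for the number $N_n^+$ of real roots of $P_n$ in an interval $I_n = [1 - \delta, 1 - 1/n^{A}]$ approaching $1$ from below (and analogously for $-1$).

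Next, I would change variables via $x = 1 - s/n$, which stretches $I_n$ to an $s$-interval of length $\Theta(\log n)$, and partition this into $M \asymp \log n$ consecutive blocks $B_k$ separated by small buffer gaps. On each block, $P_n(1 - s/n)$ is well-approximated in $L^2$ by a truncation $\widetilde P_{n,k}$ depending only on the $\xi_i$ for $i$ in a carefully chosen window (heuristically, the indices $i$ with $is/n = O(1)$ for $s \in B_k$). Choosing the blocks so that the windows are pairwise disjoint makes the truncated root counts $\widetilde N_{n,k}$ independent. A Lyapunov CLT for the triangular array $\{\widetilde N_{n,k}\}_{k \le M}$---whose Lyapunov condition is verified using the bounded $(2+\ep)$-moment of $\xi$---then yields asymptotic normality of $\sum_k \widetilde N_{n,k}$. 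To pass back to the true count, I would show $\sum_k \widetilde N_{n,k} - N_n^+ = o(\sqrt{\log n})$ in $L^2$, whereupon Slutsky's theorem transfers the CLT to $N_n(\R)$.

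For the variance asymptotic, I would carry out a Kac--Rice-type computation block by block. Under the scaling $x = 1 - s/n$, an appropriate normalization of $P_n(1 - s/n)$ converges (by an invariance principle using the $(2+\ep)$-moment assumption) to a stationary Gaussian process whose one-point zero intensity integrates to reproduce the classical leading term $\frac{2}{\pi} \log n$ for $\E N_n(\R)$, while the two-point intensity of the zero process integrates against itself, minus the square of the mean, to give the variance correction factor; combining the four symmetric contributions near $\pm 1$ and their reciprocals yields exactly $K = \frac{4}{\pi}\bigl(1 - \frac{2}{\pi}\bigr)$.

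The main obstacle is the second step: rigorously justifying the independence approximation. One must show that the error $P_n - \widetilde P_{n,k}$ cannot create or destroy too many sign changes in $B_k$. This requires simultaneous anti-concentration estimates for $P_n$ at the candidate roots (so that they have room to tolerate small perturbations) together with lower bounds on $|P_n'|$ at those points. The non-atomicity condition $\P(\xi = 0) = 0$ is crucial for excluding degenerate configurations where $\widetilde P_{n,k}$ accidentally vanishes on a non-negligible set, and the $(2+\ep)$-moment supplies the quantitative tail estimates needed to make the approximation errors summable over all $\log n$ blocks, so that the total perturbation to the root count is $o(\sqrt{\log n})$ as required.
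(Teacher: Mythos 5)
The paper does not prove Theorem \ref{theorem:Maslova}: it is quoted from Maslova \cite{Mas1, Mas2}, and your outline is, in essence, a reconstruction of Maslova's original strategy --- symmetry reduction to neighborhoods of $\pm 1$, logarithmic blocking, truncation of $P_n$ to disjoint coefficient windows to manufacture independence, a Lyapunov CLT for the resulting triangular array, and a Kac--Rice two-point computation for the variance constant. The paper's own route to this statement (and to its strengthening, which drops $\P(\xi=0)=0$) is genuinely different in its first half: it first proves a universality theorem (Theorem \ref{thm:distribution_universality}, via a Lindeberg swap applied to $\log|P_n|$ sampled at random points and a Green's-function representation of the smoothed root count) to replace the $\xi_i$ by Gaussians, and only then runs the block/truncation/Lyapunov argument (Section \ref{sec:lm:CLT_gau}), where Gaussianity makes the anti-concentration and small-ball estimates essentially free. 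What your route buys is directness; what it costs is exactly the step you flag as the ``main obstacle'': for non-Gaussian $\xi$ one must prove that the truncation error does not create or destroy sign changes, which requires quantitative small-ball probabilities for $P_n(x)$ and joint repulsion estimates --- this is where Maslova needs $\P(\xi=0)=0$ and where the computations become heavily dependent on $c_i\equiv 1$. Two concrete cautions: (i) the substitution $x=1-s/n$ maps $[1-\delta,\,1-1/n^{A}]$ to an $s$-interval of length $\Theta(n)$, not $\Theta(\log n)$; the correct blocking variable is logarithmic, $s=\log\frac{1}{1-x}$ (equivalently $x_j=1-e^{-j\delta}$ as in Section \ref{sec:lm:CLT_gau}), which is also what makes the coefficient windows $\{i: i(1-x)=O(1)\}$ pairwise disjoint across blocks; (ii) the variance asymptotic $\Var N_n(\R)=(K+o(1))\log n$ is not a soft consequence of an invariance principle --- the convergence of two-point zero intensities and the summability of the off-diagonal covariance terms (controlled by $1-r^2(x,y)$ as in the proof of Lemma \ref{lmm:tosignchange}) must be established quantitatively, and for general $\xi$ this again requires the universality of correlation functions rather than a one-line appeal to weak convergence. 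As a sketch of the classical proof your proposal is sound; as a proof it leaves precisely the hard estimates unexecuted.
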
 

The proof of Maslova relied heavily on explicit computation that requires all the $c_i$ to be equal. 
Only very recently, Central Limit Theorems have been established for other classes of polynomials, via new methods. 
 In 2015, Dalmao  \cite{dalmao2015} established the CLT  for binomial  polynomials (the case when $c_i = \sqrt { n \choose i} $), and in 2018,
 Do and the second author \cite{DV} handled Weyl polynomials ($c_i = \frac{1}{\sqrt {i ! } }$)). However, in both papers, the authors need to assume that the random variables $\xi_i$ are standard Gaussian and their arguments rely  strongly on special 
 properties of Gaussian processes. It remains a 
 major challenge to extend these results to other random variables $\xi_i$ (Rademacher, for example).  For related results concerning random trigonometric polynomials, see  \cite{gw2011, al2013, adl2016}.

The goal of this paper is to establish CLT for 
a large class of random polynomials where the deterministic  coefficients $c_i$ grow polynomially. We will only need a mild assumption on the $\xi_i$, which is satisfied by most random variables used in practice 
(in particular, this assumption is weaker than Maslova's).  In fact, we can also handle the more general setting when the 
$\xi_i$ are not  iid.

We consider 
$$
P_{n}(x)=\sum_{i=0}^{n}c_{i}\xi_{i}x^{i}
$$
where $\xi_{i}$ are independent random variables and $c_{i}$ are deterministic coefficients satisfying the following conditions for some positive constants $N_{0}, \tau_{1}, \tau_{2}, \varepsilon$ and some constant $\rho>-1/2$.
   
\begin{enumerate}[label=(A\arabic{*}), ref=A\arabic{*}]
\item  \label{cond_xi} The random variables $\xi_{i}$ are independent (but not necessarily identically distributed) real-valued random variables with unit variance and bounded $ (2+\varepsilon)$ moments, namely $\E |\xi_{i}|^{2+\varepsilon}\leq\tau_{2}$, 
\item   \label{cond_mean_0}$\E \xi_{i}=0$ for all $i\geq N_{0},$

\item  \label{cond_ci_rho} The coefficients $c_{i}$ are deterministic real numbers that grow polynomially, namely
$$|c_{i}|\leq\tau_{2} \quad\text{for all } 0\leq i<N_{0},$$
and
$$\tau_{1}i^{\rho}\leq|c_{i}|\leq\tau_{2}i^{\rho} \quad\text{for all }N_{0}\leq i<n.$$
\end{enumerate}

This class contains many interesting ensembles of polynomials including

\begin{itemize} 
	
	\item the Kac polynomial (all $c_i=1$)
	
	\item semi-Kac polynomials with  $c_{\delta n} = \dots =c_n =1 $ (for some constant $0 <  \delta  < 1$ ) and all other $c_i$  taking arbitrary values from  a fixed set of non-zero constants. (For example, we can have 
	$c_{n/2} = \dots =c_n=1$ and all $c_i, i < n/2$ are either 2 or 3 in arbitrary fashion.) 
	
	\item all derivatives of the Kac polynomial (the zeroes of these polynomials are thus the critical points of the Kac polynomial),
	
	
	\item hyperbolic polynomials $P_n(x) = \sum_{i=0}^{n}\sqrt{\frac{L(L+1)\dots(L+i-1)}{i!}}\xi_i x^{i}$ where $L$ is a positive constant (see \cite{HKPV, DOV, flasche2018real} and the references therein).
	
	\item $c_i$ has the form $f(i) +  g(i)$ where $f(i)$ is a polynomial in $i$ of a fixed degree $d >0$ and $g(i) $ is any function satisfying $|g(i) | = o( |f(i)| ) $.

		\end{itemize}

Our main result establishes the CLT for these random polynomials.

\begin{theorem}\label{thm:CLT_general}
	Assume that the polynomial $P_n$ satisfies Conditions \eqref{cond_xi}-\eqref{cond_ci_rho} and that $\Var N_n(\R) \ge c \log n$ for some constant $c>0$. 
	Then   $  \frac{N_{n}(\R)-\mu_{n}}{\sigma_{n}} \xrightarrow{\text{ d }} \mathcal N(0,1) $ where $\mu_{n}=\E N_{n}(\R)$, $\sigma_{n}=\sqrt {\Var N_n(\R)}$.
	\end{theorem}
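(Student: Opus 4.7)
The plan is to reduce $N_n(\R)$ to a sum of almost-independent summands indexed by dyadic scales, and then apply a Lindeberg/Lyapunov-type CLT. First, using the symmetries $x\mapsto -x$ and $x\mapsto 1/x$ (the latter applied to the reciprocal polynomial $x^nP_n(1/x)$, whose coefficient structure also satisfies \eqref{cond_ci_rho} since $\rho>-1/2$), I would reduce the problem to counting real roots in $(0,1)$. The regions very close to $\pm 1$ and to $0$ or $\infty$ need to be excised and handled separately: by standard first-moment (Kac--Rice) estimates under \eqref{cond_ci_rho} these pieces contribute $o(\sigma_n)$ in expectation and, via a Markov argument, $o(\sigma_n)$ to the fluctuations, so they can be discarded without affecting the CLT.

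On the remaining working interval, say $(0,1-n^{-C})$, I would use a dyadic decomposition into $K=\Theta(\log n)$ blocks $I_k=[1-2^{-k},1-2^{-k-1})$. The heuristic $|c_ix^i|$ is maximized for $i$ of order $2^k$ on $I_k$, so I would split $P_n=Q_k+R_k$ where $Q_k$ retains only the coefficients in an enlarged window $J_k^{\mathrm{ext}}$ of length $\Theta(2^k)$ around $i\sim 2^k$. The core analytical step is to show that $\sup_{x\in I_k}|R_k(x)|$ is, with high probability, strictly smaller than $\inf_{x\in I_k}|P_n(x)|$ on a suitable grid, so that the number of roots $X_k$ of $P_n$ in $I_k$ equals the number of roots of $Q_k$ in $I_k$ except on an event of probability $o(1/\log n)$. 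By choosing the windows $J_k^{\mathrm{ext}}$ so that only consecutive ones overlap, the variables $\{X_k\}_{k\text{ even}}$ and $\{X_k\}_{k\text{ odd}}$ are each collections of independent random variables.

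Given the replacement $N_n(\R)\approx \sum_k X_k$ (up to the excised boundary contributions), I would apply Lyapunov's CLT to the even and odd subseries separately, then recombine. Each $X_k$ is bounded a.s. by the degree of $Q_k$, i.e.\ $O(2^k)$, but polynomial root bounds (a Rolle/Jensen argument) together with tail estimates of the $\xi_i$ yield a much better bound, e.g.\ $\E X_k^{2+\varepsilon}=O(1)$, uniformly in $k$. Combined with the hypothesis $\Var N_n(\R)=\Omega(\log n)=\Omega(K)$, the Lyapunov ratio $\sum_k \E|X_k-\E X_k|^{2+\varepsilon}/\sigma_n^{2+\varepsilon}$ is $O(K/K^{(2+\varepsilon)/2})=o(1)$, delivering the CLT.

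The main obstacle is the truncation / localization step in the second paragraph: showing that $|P_n(x)|$ is typically not small on $I_k$, and that $R_k$ is small, uniformly in $x\in I_k$ and simultaneously for all $O(\log n)$ scales $k$. For Gaussian $\xi_i$ this is standard via the explicit joint density, but here the $\xi_i$ are only assumed to have $(2+\varepsilon)$ moments and need not be identically distributed, so the anti-concentration estimate must come from a Halász-type small-ball inequality applied to the random sum $\sum_{i\in J_k^{\mathrm{ext}}}c_i\xi_i x^i$ at each point $x$, combined with an $\varepsilon$-net argument on $I_k$ and a careful union bound across the $O(\log n)$ scales. Handling boundary effects near $x=\pm 1$ — where the effective number of contributing coefficients grows like $n$ and where Maslova's explicit Kac-formula computations do not carry over — will also require separate, delicate treatment.
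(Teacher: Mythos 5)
Your proposal tries to run the block--truncation--Lyapunov argument directly for the general coefficients $\xi_i$, and this is where it breaks down. The paper's entire architecture exists precisely because that direct route is not available: the repulsion and anti-concentration estimates that make the truncation and moment bounds work (Lemma \ref{multipleroot}, the sign-change approximation of Lemma \ref{lmm:tosignchange}, the truncation comparison of Lemma \ref{lm:truncation}, and the fourth-moment bound of Lemma \ref{lm:boundforthmoment}) are all proved \emph{only for Gaussian} $\xi_i$, using the explicit joint density, hypercontractivity, and the identity $\E\cos(aZ)=e^{-a^2/2}$. For $\xi_i$ with only bounded $(2+\varepsilon)$ moments, the available tool (Lemma \ref{lm:bddness}) gives $\E N_n^k$ on a dyadic block of order $\log^k(1/\delta)$, not $O(1)$, so your claimed uniform bound $\E X_k^{2+\varepsilon}=O(1)$ is unsupported; and your localization criterion ``$\sup_{I_k}|R_k|<\inf_{I_k}|P_n|$'' is vacuous exactly where it matters, since $\inf_{I_k}|P_n|=0$ whenever $I_k$ contains a root (and agreement of sign patterns on a grid does not force equality of root counts when close pairs or near-double roots occur --- controlling that is precisely the content of the repulsion lemma). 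The paper circumvents all of this with the universality/Lindeberg-swapping reduction (Theorem \ref{thm:distribution_universality} and Corollary \ref{cor:moment_universality}), which transfers the distribution of $N_n(\mj)$ to the Gaussian model by comparing only \emph{smooth} functionals of $\log|P_n|$ at sampled points; this central idea is entirely absent from your proposal.

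A second, independent gap is the even/odd recombination. Splitting the blocks into two families each of which is internally independent does not let you ``apply Lyapunov to each subseries and recombine'': the two subseries are dependent and each carries a constant fraction of the variance, and a sum of two dependent asymptotically normal sequences need not be asymptotically normal; moreover $\Var N_n(\R)$ is then not $\sum_k\Var X_k$, so the normalization by $\sigma_n$ is not justified. The paper instead inserts buffer blocks between the main blocks so that \emph{all} main summands $Z_k, W_k$ are mutually independent, and shows the buffers' total variance is $o(\log n)$ (equation \eqref{err3}); you would need an analogous gap-and-discard construction, plus a proof that the discarded buffers are negligible relative to $\sigma_n$. The outer reductions in your first paragraph (symmetries, excising neighborhoods of $0,\pm1,\infty$, Markov for the edge contribution) do match the paper's Proposition \ref{prop:boundedge} and the deduction of Theorem \ref{thm:CLT_general} from \eqref{clt:mj} and \eqref{clt:mj:0}, but the core of the argument as proposed does not go through without the Gaussian reduction.
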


The condition  $\Var N_n(\R) \ge c\log n $ is guaranteed  by the following lemma. 

\begin{lemma} \label{lemma:variance}  
	Assume that the polynomial $P_n$ satisfies Conditions \eqref{cond_xi}-\eqref{cond_ci_rho} and there exist constants $C, \ep >0$ such that for all 
	$i\in \left [n-n\exp\left (-\log^{1/5}n\right ), n-\exp\left (\log^{1/5}n\right )\right ]$,
\begin{equation}\label{cond:ci:kac}
\frac{|c_{i}|}{|c_n|} - 1\le C\exp\left (-\left (\log\log n\right )^{1+\varepsilon}\right ).\end{equation}
Then  $\Var N_n(\R) \ge c\log n $ for some constant $c>0$. 
\end{lemma}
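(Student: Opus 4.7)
The plan is to extract $\Omega(\log n)$ worth of variance from the real roots of $P_n$ in a carefully chosen interval near $x=1$, using condition \eqref{cond:ci:kac} to reduce to a Kac-like setup at each of $\Theta(\log n)$ dyadic scales. Set $L_n := \log^{1/5} n$ and work on
\[
I_n := [1 - e^{-L_n},\; 1 - n^{-1} e^{L_n}].
\]
Writing $y = 1-x$, the value $P_n(1-y) = \sum_i c_i \xi_i (1-y)^i$ for $y$ in the relevant range is dominated by those indices with $iy \lesssim 1$, namely $i \in [e^{L_n}, n e^{-L_n}]$, which is precisely the index range where \eqref{cond:ci:kac} gives $|c_i|/|c_n| = 1 + o(1)$. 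Thus on $I_n$ the rescaled polynomial $P_n/c_n$ is a small perturbation of a Kac-type polynomial.

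Partition $I_n$ into dyadic blocks $J_k := [1 - 2^{-k},\, 1 - 2^{-k-1}]$ for $k$ in an index set $\mathcal K$ with $|\mathcal K| = \Theta(\log n)$. Define the sign-change indicator
\[
S_k := \mathbf 1\bigl[P_n(1-2^{-k})\, P_n(1-2^{-k-1}) < 0\bigr],
\]
and set $T := \sum_{k \in \mathcal K} S_k$, so that $N_n(\R) \ge T$. For each $k$ I would establish, by an approximate-Kac comparison combined with a Berry--Esseen bound available under \eqref{cond_xi}, that the pair $(P_n(1-2^{-k}), P_n(1-2^{-k-1}))$ is close in distribution to a centered Gaussian whose correlation coefficient is bounded away from $\pm 1$. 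This yields $\mathbb P(S_k = 1) \in [c_1, 1-c_1]$ for a universal $c_1 \in (0, 1/2)$, and hence $\Var(S_k) \ge c_2 > 0$.

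The main technical step is a decoupling estimate: for $|k - \ell|$ sufficiently large,
\[
|\Cov(S_k, S_\ell)| \;\le\; F(|k-\ell|), \qquad \sum_{m\ge 1} F(m) < \infty.
\]
Morally, the coefficients $\xi_i$ contributing to $P_n$ at scale $J_k$ (those with $i \approx 2^k$) are nearly disjoint from those contributing at scale $J_\ell$; making this quantitative requires a four-point joint-density estimate via CLT/Berry--Esseen under the $(2+\varepsilon)$-moment hypothesis. Summing then gives $\Var(T) \ge \sum_k \Var(S_k) - 2\sum_{k<\ell}|\Cov(S_k,S_\ell)| = \Omega(\log n)$. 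To pass from $\Var T = \Omega(\log n)$ to $\Var N_n(\R) = \Omega(\log n)$, one shows that the non-negative residual $R := N_n(\R) - T$ satisfies $\Var(R) = o(\Var T)$ (by using the Kac--Rice two-point function to upper-bound the second factorial moment of the root count on the ``unused'' portion of the real line), and then
\[
\Var N_n(\R) \;=\; \Var(T + R) \;\ge\; \bigl(\sqrt{\Var T} - \sqrt{\Var R}\,\bigr)^2 \;=\; (1-o(1))\, \Var T.
\]

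The main obstacle is the decoupling covariance bound in the non-Gaussian setting: unlike for Gaussian polynomials (as in Dalmao or Do--Vu), there are no closed-form joint densities, and one must establish quantitative anti-concentration for two- and four-dimensional projections of linear combinations of the $\xi_i$. This is where condition \eqref{cond:ci:kac} is crucial---it guarantees that within each dyadic block, the dominant contribution to $P_n$ comes from many coefficients of comparable size, allowing a Berry--Esseen argument to bring the joint law close to Gaussian and thereby convert the heuristic ``disjoint-scale'' independence into a summable covariance bound.
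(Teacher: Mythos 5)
Your approach is genuinely different from the paper's: the paper never computes variances directly for general $\xi_i$, but instead reduces to the Gaussian case by the swapping machinery (Corollary \ref{cor:moment_universality} together with Proposition \ref{prop:boundedge}), and then proves the Gaussian lower bound by a second, unconventional swap of the \emph{deterministic} coefficients $c_{n-i}/c_n$ to $1$, so that Maslova's known variance asymptotic for the Kac polynomial can be imported; the only ``honest'' variance estimates are a lower bound inherited from \cite{Mas1} and a covariance bound between the two ends of the polynomial. Your plan of working directly with sign changes and a covariance decay estimate is closer in spirit to Maslova's original argument, and if it worked it would bypass the Gaussian reduction entirely. However, as written it has a fatal gap.

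The gap is in the final step, where you set $R := N_n(\R) - T$ and claim $\Var R = o(\Var T)$. Your $T$ only records sign changes in an interval $I_n \subset (0,1)$, so $R$ contains, among other things, the full root counts on $(-1,0)$, $(1,\infty)$ and $(-\infty,-1)$. For the Kac polynomial (and for every ensemble covered by the lemma) each of these three regions carries $\Theta(\log n)$ expected roots and, by Maslova's theorem, contributes $\Theta(\log n)$ to the variance; the pieces you can actually control (the edge regions and the discrepancy between roots and sign changes inside $I_n$) have $o(\log n)$ second moment and cannot cancel this. Hence $\Var R = \Theta(\log n) = \Theta(\Var T)$, and the inequality $\Var(T+R) \ge \bigl(\sqrt{\Var T}-\sqrt{\Var R}\bigr)^{2}$ yields nothing. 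To repair this you would have to include all four regions in $T$ and then control the cross-covariances between them --- in particular between $(0,1)$ and $(1,\infty)$, which is exactly the delicate estimate \eqref{covPR} in the paper, proved there only by exploiting exact Gaussian identities ($\E\cos(aZ)=e^{-a^{2}/2}$). Two further, smaller issues: (i) a summable bound $|\Cov(S_k,S_\ell)|\le F(|k-\ell|)$ with $\sum_m F(m)<\infty$ does \emph{not} by itself give $\Var T=\Omega(\log n)$, since $2\sum_{k<\ell}|\Cov(S_k,S_\ell)|$ is then of the same order $\Theta(\log n)$ as $\sum_k\Var(S_k)$ and could swamp it; you would need to thin the blocks to make the covariance sum a small fraction of the variance sum. (ii) The decoupling itself, which you correctly identify as the main obstacle, is not an application of Berry--Esseen alone: every $\xi_i$ appears in every $S_k$, so one needs a truncation of $P_n$ adapted to each scale plus anti-concentration of $P_n(x)$ near $0$ to control the effect of truncation on the sign --- this is essentially the content of the paper's Lemmas \ref{lm:obser1} and \ref{lm:truncation}, which are proved only in the Gaussian setting. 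As proposed, the hardest parts of the lemma remain unproved.
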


The condition in this lemma is satisfied by all classes listed above. We obtain

\begin{corollary} The CLT holds for the Kac polynomial and its derivatives.  It also holds for hyperbolic polynomials. 
\end{corollary}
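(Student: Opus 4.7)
The plan is to reduce the corollary to Theorem \ref{thm:CLT_general} by verifying hypotheses (\ref{cond_xi})--(\ref{cond_ci_rho}) for each of the three families and then invoking Lemma \ref{lemma:variance} to supply the variance lower bound $\Var N_n(\R) = \Omega(\log n)$. Conditions (\ref{cond_xi}) and (\ref{cond_mean_0}) are assumptions on the atom variables $\xi_i$ and are part of the implicit setup for these ensembles, so the substantive task is to check the polynomial growth (\ref{cond_ci_rho}) and the finer asymptotic ratio (\ref{cond:ci:kac}) on the coefficient sequence.

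For the Kac polynomial, $c_i \equiv 1$, so (\ref{cond_ci_rho}) holds trivially with $\rho = 0$ and $\tau_1 = \tau_2 = 1$, and $|c_i|/|c_n| = 1$ makes (\ref{cond:ci:kac}) immediate. For the $r$-th derivative (with $r \geq 1$ a fixed integer), reindexing turns the polynomial into one of degree $m = n - r$ with coefficients $c_j = (j+r)!/j! = \prod_{\ell=1}^{r}(j+\ell)$ (a polynomial in $j$ of exact degree $r$) and atom variables $\xi_{j+r}$, which inherit (\ref{cond_xi})--(\ref{cond_mean_0}). Thus (\ref{cond_ci_rho}) holds with $\rho = r$, and for $j$ in the window required by Lemma \ref{lemma:variance},
\[
\frac{c_j}{c_m} = \prod_{\ell=1}^{r}\frac{j+\ell}{m+\ell} = 1 + O\!\left(\frac{m-j}{m}\right) = 1 + O\!\left(\exp(-\log^{1/5} m)\right),
\]
which easily beats (\ref{cond:ci:kac}).

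For the hyperbolic polynomial, $c_i = \sqrt{\binom{L+i-1}{i}}$. Stirling gives $\binom{L+i-1}{i} = \frac{i^{L-1}}{\Gamma(L)}(1+O(1/i))$, so $c_i = \Theta(i^{(L-1)/2})$ and (\ref{cond_ci_rho}) holds with $\rho = (L-1)/2 > -1/2$ since $L > 0$. Writing $c_i^2/c_n^2 = \prod_{j=i+1}^{n} j/(L+j-1)$ and using $\log(j/(L+j-1)) = -\log(1+(L-1)/j) = O(1/j)$, one obtains $\log(c_i^2/c_n^2) = O((n-i)/i)$. For $i \in [n - n\exp(-\log^{1/5} n),\, n - \exp(\log^{1/5} n)]$ this bound is $O(\exp(-\log^{1/5} n))$, again comfortably inside (\ref{cond:ci:kac}).

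With every hypothesis verified in every case, Lemma \ref{lemma:variance} gives $\Var N_n(\R) = \Omega(\log n)$, and Theorem \ref{thm:CLT_general} then delivers the CLT. The argument is really just bookkeeping --- Stirling together with $\log(1+x) = O(x)$ --- so there is no serious obstacle; the content of the corollary is simply that these three canonical ensembles lie comfortably within the generality of Theorem \ref{thm:CLT_general}. The only point requiring a sentence of care is the hyperbolic case, where one must extract the correct exponent $\rho = (L-1)/2$ from the asymptotic of $\binom{L+i-1}{i}$ and check that the multiplicative ratio of coefficients, not just the leading asymptotic, satisfies (\ref{cond:ci:kac}) uniformly across the required range of $i$.
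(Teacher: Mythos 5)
Your proposal is correct and follows exactly the route the paper intends: verify Conditions (\ref{cond_xi})--(\ref{cond_ci_rho}) and the ratio condition \eqref{cond:ci:kac} for each ensemble, then combine Lemma \ref{lemma:variance} with Theorem \ref{thm:CLT_general}. The paper leaves these verifications implicit ("the condition in this lemma is satisfied by all classes listed above"), and your computations of $\rho$ and of the coefficient ratios for the derivative and hyperbolic cases supply precisely the missing bookkeeping, correctly.
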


\begin{remark}  When restricted to  the Kac polynomial with $\xi_i$ being iid copies of an atom variable $\xi$, our result strengthens  Maslova's, as  the condition  $\P (\xi=0) =0$ in Theorem \ref{theorem:Maslova} is removed. 
\end{remark}

\begin{remark}  The real roots of $P_n (x) $ are the real solutions of the equation  $P_n(x)=0$. The flexibility in condition (A2) enables to 
	extend Theorem \ref{thm:CLT_general}  to the equation $P_n(x)= H(x)$, where $H(x)$ is any fixed polynomial with bounded degree. 
	In particular, taking $H(x)= L$ for a constant $L$, we conclude that CLT holds for any level set of $P_n$. 
\end{remark} 

{\bf Related literature.} Random polynomials with coefficients growing polynomially, also known as \textit{generalized Kac polynomials}, have attracted researches in different areas including Probability and Mathematical Physics. For example, we refer to Das \cite{das1972real}, Schehr--Majumdar \cite{schehr2007statistics, MP}, Do and the authors \cite{DOV}. It has been established in \cite{schehr2007statistics, MP} that the real roots of these polynomials are closely connected to zero crossing properties of the diffusion equation with random initial conditions. The connection has been applied in a paper by Dembo and Mukherjee \cite{dembo2015no} to study the probability that these random polynomials have no real roots, which is known as persistence probability as it is related to persistence properties of physically systems. See also \cite{dembo2002random} and the references therein. In \cite{dembo2015no}, the random variables are Gaussian and the persistence probability is $n^{-b+o(1)}$. It is shown that the power exponent $b$ is universal roughly in the sense that it depends on $\rho$ (in Condition \eqref{cond_ci_rho}) rather than the specific values of $c_i$. We also refer to Poplavskyi  and Schehr \cite{poplavskyi2018exact} for a recent development in finding the power exponent for the Kac polynomial. It would be interesting to see if the power exponent is universal in the sense presented in this paper that is if we replace the Gaussian distribution by other distributions.

{\bf Notations.}
We use standard asymptotic notations under the assumption that $n$ tends to infinity.  For two positive  sequences $(a_n)$ and $(b_n)$, we say that $a_n \gg b_n$ or $b_n \ll a_n$ if there exists a constant $C$ such that $b_n\le C a_n$. If $|c_n|\ll a_n$ for some sequence $(c_n)$, we also write $c_n\ll a_n$. 

If $a_n\ll b_n\ll a_n$, we say that $b_n=\Theta(a_n)$.  If $\lim_{n\to \infty} \frac{a_n}{b_n} = 0$, we say that $a_n = o(b_n)$.  If $b_n\ll a_n$, we sometimes employ the notations $b_n = O(a_n)$ and $a_n = \Omega(b_n)$ to make the idea intuitively clearer or the writing less cumbersome; for example, if $A$ is the quantity of interest, we may write $A = A'+ O(B)$ instead of $A - A' \ll B$, and $A = e^{O(B)}$ instead of $\log A\ll B$.

\section{The Universality Method}

The key ingredient of our proof is  the universality method. The general idea of this method is to show that  limiting laws do not depend too much on the distribution of the atom variable $\xi$ (or the variables $\xi_i$ in general, if they are not iid). 
Once universality has been established, then it suffices to prove the desired law for the case in which the $\xi_i$ are Gaussian, and here one can bring extra powerful tools such as 
  properties of Gaussian processes;  see
 \cite{HKPV, EK, MP, prosen1996, sodin2005zeroes, TVpoly, GKZ, sodin2005zeroes, ibragimov2013distribution, kabluchko2014asymptotic, pritsker1, pritsker2}.

The universality method has been powerful in studying local statistics such as the density or correlation functions concerning  the number of roots in a small region (where the expectation is of order $\Theta (1)$) (see, for example, \cite{TVpoly, NNV, DOV, nguyenvurandomfunction17}).  However, universality arguments are  tailored to the local settings and in order to use them to prove the global law in this paper, we need to perform a number of considerably technical 
steps, linking local statistics to the global one.  The proof for the Gaussian case itself also requires new ideas.

To study the real roots of  $P_n$, we divide the real line into two regions: a core region that contains most of the real roots and the remaining one that contains an insignificant number of real roots. Consider small numbers $0\le b_n<a_n<1$ that depend on $n$ and satisfy the following property for all constants $A>0$:
\begin{equation}\label{cond-a-log}
a_n \ll \log^{-A}n. 
\end{equation}
For example, $a_n = \exp\left (-(\log n)^{1/5}\right )$.
We define
\begin{equation}\label{def:mj}
\mj := \mj_{a_n, b_n}:=\pm (1-a_n, 1-b_n) \cup \pm (1-a_n, 1-b_n)^{-1}
\end{equation}
where for any given set $S$, we define $-S:= \{-x: x\in S\}$, $S^{-1} := \{x^{-1}: x\in S\}$, and $\pm S := -S\cup S$. 
For appropriate choices of $a_n$ and $b_n$, this will be our core region.

For a subset $S\subset \C$, let $N_n(S)=N_{P_n}(S) $ be the number of roots of $P_{n}$ in $S$. Let   $\tilde{\xi}_{i}$ be iid  standard Gaussian random variables and set 
$$\tilde{P}_{n}=\sum_{i=0}^{n}c_{i}\tilde{\xi}_{i}x^{i}.$$ 
We denote by $\tilde{N}_{n}(S)=N_{\tilde P _n}(S)$ the number of zeros of $\tilde{P}_n$ in $S$.

Our main result on global universality of the real roots states that on the core $\mj$, the distributions of the roots of $P_n$ and $\tilde P_n$ are approximately the same. 
\begin{theorem} \label{thm:distribution_universality}
Assume that the polynomial $P_n$ satisfies Conditions \eqref{cond_xi}-\eqref{cond_ci_rho}. There exist positive constants $C$ and $c$  such that for every $0\le b_n<a_n<1$ satisfying \eqref{cond-a-log}, for sufficiently large $n$ and every function $F:\mathbb{R}\rightarrow \mathbb{R}$ whose derivatives up to order $3$ are bounded by $1$, we have
$$
\left|\E F(N_n (\mj))-\E F\left (\tilde N_n(\mj)\right )\right|\leq Ca_n^{c}+ Cn^{-c}.
$$
\end{theorem}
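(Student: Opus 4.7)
The proof follows the universality philosophy: it suffices to establish a Lindeberg-style exchange estimate at the atom level for a smoothed proxy of $N_n(\mj)$. Because $N_n(\mj)$ is integer-valued and hence discontinuous in the $\xi_i$, I would first introduce the Kac--Rice smoothed count
$$
\widehat N_n(\mj) := \int_{\mj} \phi_\eta(P_n(x))\, |P_n'(x)|\, dx,
$$
where $\phi_\eta$ is a bump function approximating $\delta$ at a polynomially small scale $\eta=n^{-C}$. The first step is to show that $\E F(\widehat N_n(\mj))$ and $\E F(N_n(\mj))$ agree up to $O(n^{-c})$, and likewise for $\tilde P_n$. This hinges on quantitative anti-concentration of $P_n(x)$: for each $x \in \mj$ the probability that $|P_n(x)|<\eta$ should be polynomially small, which follows from a Paley--Zygmund type argument using the $(2+\ep)$-moments \eqref{cond_xi} and the growth bounds \eqref{cond_ci_rho} on the $c_i$.

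Having reduced to comparing $\E F(\widehat N_n(\mj))$ across the two ensembles, I would carry out the Lindeberg telescope on the atoms. Define the hybrid polynomials
$$
P_n^{(j)}(x) := \sum_{i<j} c_i \tilde\xi_i x^i + \sum_{i\geq j} c_i \xi_i x^i,
$$
so $P_n^{(0)}=P_n$ and $P_n^{(n+1)}=\tilde P_n$. For each $j$, conditioning on all coefficients except $\xi_j$ and $\tilde\xi_j$ and Taylor-expanding $F(\widehat N_n^{(j)}(\mj))$ to third order in the varying atom, the zeroth through second order contributions cancel in expectation because $\xi_j$ and $\tilde\xi_j$ share mean $0$ and variance $1$. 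The third-order remainder is bounded by $\E|\xi_j|^{2+\ep}$ times the supremum of $|\partial^3 F(\widehat N_n^{(j)}(\mj))|$ in the swapped variable; a preliminary truncation of $\xi_j$ at scale $n^{c}$ makes this expansion rigorous under only the $(2+\ep)$-moment assumption, contributing a further $O(n^{-c})$ error.

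The main obstacle is therefore the uniform control of $\partial_{\xi_j}^{3} F(\widehat N_n(\mj))$ on a high-probability event, and the subsequent summation over $j$. This third derivative is a triple integral over $\mj$ involving at most three derivatives of the mollifier $\phi_\eta$ (each bringing a factor $\eta^{-1}$) and of $P_n, P_n'$ with respect to $\xi_j$ (which bring factors $c_j x^j$ and $c_j j x^{j-1}$). After summation one obtains expressions schematically of the form $\eta^{-O(1)} \sum_j c_j^{3} j^{O(1)} \int_{\mj} x^{3j}\, dx$; the geometric series concentrates near $j \sim a_n^{-1}$, which is the source of the factor $a_n^{c}$ in the final bound, while $\eta=n^{-C}$ can be tuned so that the $\eta^{-O(1)}$ loss is absorbed by the $O(n^{-(1+\ep/2)})$ moment gain per swap. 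Carrying out these estimates quantitatively, and in particular establishing uniform-in-$x$ anti-concentration of $P_n(x)$ on the core $\mj$ for possibly discrete, non-identically-distributed atoms, is where the bulk of the technical work is expected to lie.
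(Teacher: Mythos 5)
Your overall philosophy (Lindeberg exchange on a smoothed functional of the roots, with anti-concentration controlling the singular part) matches the paper, but the specific smoothing you choose creates two gaps that I do not see how to close. First, the reduction from $N_n(\mj)$ to the mollified Kac--Rice count $\widehat N_n(\mj)$ with error $O(n^{-c})$ is not justified by pointwise anti-concentration of $P_n(x)$. Writing $\widehat N_n(\mj)=\int\phi_\eta(y)\,\#\{x\in\mj:P_n(x)=y\}\,dy$, the discrepancy with $N_n(\mj)$ is governed by the event that $P_n$ has a critical point in $\mj$ with critical value in $(-\eta,\eta)$ (a near-double real root) or a root near $\partial\mj$; controlling this uniformly over $\mj$ is a genuine repulsion estimate. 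The repulsion bound that is actually available (Lemma \ref{lm:repulsion}, from \cite{DOV}) gives a failure probability that is polynomially small only in the \emph{local scale} $\delta_i\le a_n$, not in $n$. This is precisely why the error in the theorem is $Ca_n^{c}$ (which dominates $n^{-c}$, since $a_n$ may be as large as $\exp(-(\log n)^{1/5})$) rather than $Cn^{-c}$: your claim that the smoothing step costs only $O(n^{-c})$ asserts something strictly stronger than what the theorem itself states, and it is where the missing idea lies.

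Second, the quantitative bookkeeping in your Lindeberg step does not balance. Each derivative of $\phi_\eta$ costs $\eta^{-1}=n^{C}$, so the third-order Taylor remainder carries a factor $n^{O(C)}$. The compensating gain per swap is \emph{not} $O(n^{-(1+\ep/2)})$: after normalizing by $\sqrt{V(x)}$ the relevant small parameter is $|a_j|=|c_jx^{j}|/\sqrt{V(x)}$, and $\sum_j|a_j|^{2+\ep}\le(\max_j|a_j|)^{\ep}\sum_j|a_j|^{2}=(\max_j|a_j|)^{\ep}$, which is only a power of the local scale $1-|x|\le a_n$ --- again far larger than any negative power of $n$. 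So the $\eta^{-O(1)}$ loss cannot be absorbed. The paper avoids this by never mollifying at scale $n^{-C}$: it decomposes $\mj$ into dyadic pieces at scales $\delta_i$, replaces the indicator of each piece by a smooth $\varphi_i$ mollified at the matching scale $\delta_i^{1+\alpha}$, converts $\sum_j\varphi_i(\zeta_j)$ via Green's identity and Monte Carlo sampling into a function of $\log|P_n(w_{ik})|$ whose derivatives cost only $\delta_i^{-O(\alpha)}$ (see \eqref{eq:derivativebound}), and then swaps; the per-swap gain $\delta_i^{\alpha_1\ep}$ from \eqref{eq:delocalization:1} then genuinely beats the derivative losses. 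The multi-scale decomposition and the passage to $\log|P_n|$ are not optional conveniences here --- they are what makes the derivative costs commensurate with the delocalization gains --- and your single-scale, single-interval mollification cannot reproduce that balance.
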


Since $N_n(\mj)$ is always an integer, for every real number $a_0\in \mathbb R$, 
$$
\P \left (N_n(\mj)\leq a_{0}\right )=\P \left (N_n(\mj)\leq\lfloor a_{0}\rfloor\right )=\E (F(N_n(\mj)))
$$
where $F$ is any smooth function that takes values in $[0, 1]$ and $\mathbf{1}_{(-\infty,\lfloor a_{0}\rfloor]}\leq F\leq \mathbf{1}_{(-\infty,\lfloor a_{0}\rfloor+1)}$. Therefore, Theorem \ref{thm:distribution_universality} implies
\begin{equation}\label{eq:dist:N:tildeN}
\left|\P \left (N_n(\mj)\leq a_{0}\right )-\P \left (\tilde N_n(\mj)\leq a_{0}\right )\right|\leq Ca_n^{c} + Cn^{-c}.
\end{equation}

Using Theorem \ref{thm:distribution_universality} (not in the straightforward way), we  deduce the following corollary 

\begin{corollary} \label{cor:moment_universality}
Assume that the polynomial $P_n$ satisfies Conditions \eqref{cond_xi}-\eqref{cond_ci_rho}. Let $k\geq 1$ be an integer. There exist positive constants $C$ and $c$  such that for every $0\le b_n<a_n<1$ satisfying \eqref{cond-a-log} and for sufficiently large $n$, we have
$$
\left|\E \left (N_n^{k}(\mj)\right )-\E \left (\tilde N_n^{k}(\mj)\right )\right|\leq Ca_n^{c}+ Cn^{-c}.
$$
 In particular,
$$\left|\Var \bigg (N_n(\mj)\bigg ) -\Var  \bigg (\tilde N_n(\mj)\bigg )\right |\leq Ca_n^{c}+ Cn^{-c}.$$
\end{corollary}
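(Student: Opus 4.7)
The plan is to combine the CDF comparison from Theorem \ref{thm:distribution_universality} with an a priori tail estimate for $N_n(\mj)$ and $\tilde N_n(\mj)$, via truncation and Abel summation.

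First, I would extract from Theorem \ref{thm:distribution_universality} a pointwise CDF comparison $|\P(N_n(\mj) \leq m) - \P(\tilde N_n(\mj) \leq m)| \leq C a_n^c + C n^{-c}$ valid for every integer $m \geq 0$. Since both random variables are integer-valued, it suffices to apply the theorem to a smooth $F_m$ that equals $1$ on $(-\infty, m]$, vanishes on $[m+1, \infty)$, interpolates smoothly on $[m, m+1]$, and has derivatives up to order $3$ bounded by an absolute constant (after a harmless rescaling by that constant). Then by Abel summation, $\E N_n^k(\mj) - \E \tilde N_n^k(\mj) = \sum_{m=0}^{n-1} [(m+1)^k - m^k]\, (\P(N_n(\mj) > m) - \P(\tilde N_n(\mj) > m))$. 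I would fix $M = B \log n$ for a large absolute constant $B$ and split the sum at $m = M$. The bulk sum $m < M$ is controlled by the CDF comparison, yielding a contribution $\leq C k M^k (a_n^c + n^{-c}) \leq C (\log n)^k (a_n^c + n^{-c})$; the hypothesis $a_n \ll (\log n)^{-A}$ for every $A > 0$ absorbs the polylog factor into $a_n^{c/2}$ and trivially into $n^{-c/2}$, giving an $O(a_n^{c/2} + n^{-c/2})$ bound.

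The tail sum $m \geq M$ is the main obstacle, since the uniform CDF comparison is far too crude once $m$ approaches $n$. Here I would need an a priori strong moment bound of the form $\E N_n^K(\mj),\ \E \tilde N_n^K(\mj) \leq (D \log n)^K$ valid for all integers $K \leq \log n$ with $D$ an absolute constant. For the Gaussian $\tilde P_n$ this can be extracted from the Kac--Rice formula combined with Hadamard-type bounds on the Gaussian $K$-point correlation density; for $P_n$ one transfers via local universality of correlation functions on short sub-intervals tiling $\mj$, as developed in related work such as \cite{nguyenvurandomfunction17}. Markov's inequality applied with $K$ of order $\log n$ then yields $\P(N_n(\mj) > B \log n),\ \P(\tilde N_n(\mj) > B \log n) \leq n^{-c'}$ for $B$ sufficiently large, with $c' = c'(B) \to \infty$ as $B \to \infty$. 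Combined with the trivial deterministic bound $N_n(\mj), \tilde N_n(\mj) \leq n$, the tail sum is bounded by $2k n^k \cdot n^{-c'} \leq n^{-c''}$ once $B$ (hence $c'$) is chosen large enough relative to $k$.

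Finally, the variance assertion follows immediately by writing $\Var X = \E X^2 - (\E X)^2$ for both $N_n(\mj)$ and $\tilde N_n(\mj)$ and combining the $k = 1, 2$ cases of the moment comparison; the extra factor $\E N_n(\mj) + \E \tilde N_n(\mj) = O(\log n)$ coming from the identity $A^2 - B^2 = (A-B)(A+B)$ is once again absorbed into the error exponent. The bulk of the technical work lies in the a priori strong moment bound, which cannot be derived from Theorem \ref{thm:distribution_universality} alone.
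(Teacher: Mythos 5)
Your treatment of the bulk is fine and essentially equivalent to the paper's: instead of the CDF comparison plus Abel summation, the paper applies Theorem \ref{thm:distribution_universality} directly to a smooth function $F$ that equals $x^k$ on $[0,\log^4 n]$ and vanishes outside $[-1,\log^4 n+1]$, rescaled by $(\log n)^{-4k}$; in both versions the polylog loss is absorbed by \eqref{cond-a-log}. The variance deduction is also the same as the paper's.

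The genuine gap is in your tail estimate, and it is twofold. First, the ingredient you propose --- $\E N_n^K(\mj)\le (D\log n)^K$ with $D$ \emph{uniform in $K$} for all $K\le \log n$, so that Markov with $K\asymp\log n$ gives $\P(N_n(\mj)>B\log n)\le n^{-c'}$ with $c'$ arbitrarily large --- is not established anywhere and is far stronger than anything the paper's machinery provides. Lemma \ref{lm:bddness} gives $\E N_n^k(B(z,\delta/2))\le C\log^k(1/\delta)$ only with $C=C(k)$, and local universality of correlation functions controls fixed-order correlations, not $K$-point correlations with $K\to\infty$; for non-Gaussian $\xi_i$ this is well beyond current technology. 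Second, your fallback of pairing the trivial bound $N_n(\mj)\le n$ with the tail probability forces you to need a probability that is polynomially small in $n$. The tail bound that \emph{is} available (from \eqref{eq:tail}, via a union bound over the $M\ll\log n$ dyadic blocks) is $\P\bigl(N_n(\mj)>C\log^2 n\bigr)\ll \delta_0^{A}=a_n^{A}+n^{-A}$, and $a_n^{A}$ need not be $o(n^{-k})$ (take $a_n=\exp(-(\log n)^{1/5})$), so $n^k\cdot a_n^A$ blows up. The paper circumvents both problems at once: it truncates at the polylog level $\log^4 n$, bounds $\P(\mathcal A^c)\ll\delta_0^{A}$ blockwise via \eqref{eq:tail}, and then controls $\E N^k\mathbf 1_{\mathcal A^c}$ by Cauchy--Schwarz against the \emph{fixed-order} moment bound $\E N_i^{2k}\ll\log^{2k}(1/\delta_i)$ from \eqref{eq:secondmoment}, which yields $\delta_0^{A/2}\log^{4k}n\ll\delta_0^{c}$. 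Replacing your Markov-with-growing-$K$ step by this Cauchy--Schwarz argument repairs the proof using only tools already proved in Section \ref{sec:distribution_universality}.
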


Next, we show that the contribution outside of the core is negligible.
\begin{proposition}\label{prop:boundedge}
	Assume that the polynomial $P_n$ satisfies Conditions \eqref{cond_xi}-\eqref{cond_ci_rho}. Let $k\ge 2$ be an integer. There exists a positive constant $C$ such that for every $0\le b_n<a_n<1$ satisfying \eqref{cond-a-log} and for sufficiently large $n$, we have
\begin{equation}\label{eq:prop:boundedge}
	\E N_{n}^{k}\left (\R\setminus \mj\right )\le \begin{cases}
C\left ( \left (\log a_n \right )^{2k} + \log^{k} (nb_n)\right )\quad \text{if } b_n\ge 1/n,\\
C \left (\log a_n \right )^{2k} \quad \text{if } b_n < 1/n.
\end{cases}
\end{equation}
 \end{proposition}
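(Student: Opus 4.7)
The plan is to partition $\R\setminus\mj$ into a bulk region, two edge windows around $\pm 1$, and a tail region, and to bound the $k$-th moment of the root count on each piece separately by combining Jensen's formula for analytic functions with an anti-concentration estimate for linear combinations of the $\xi_i$'s. Explicitly, write $\R\setminus\mj = I_1\cup I_2\cup I_3$ with bulk $I_1 = (-(1-a_n), 1-a_n)$, edge windows $I_2 = [1-b_n, (1-b_n)^{-1}]\cup[-(1-b_n)^{-1}, -(1-b_n)]$, and tail $I_3 = \R\setminus[-(1-a_n)^{-1}, (1-a_n)^{-1}]$. Since $(x+y+z)^k\le 3^{k-1}(x^k+y^k+z^k)$, it suffices to bound $\E N_n^k(I_j)$ for each $j$ separately. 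The tail $I_3$ is reduced to an $I_1$-type bound via $x\mapsto 1/x$: the real zeros of $P_n$ in $I_3$ biject with the nonzero real zeros in $(-(1-a_n), 1-a_n)$ of the reversed polynomial $Q_n(x) := x^n P_n(1/x) = \sum_{i=0}^n c_{n-i}\xi_{n-i}x^i$, which satisfies \eqref{cond_xi}--\eqref{cond_ci_rho} after the relabeling $i\mapsto n-i$.

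For the bulk, I cover $[0, 1-a_n]$ by dyadic intervals $J_j := [1-2^{-j+1}, 1-2^{-j}]$ for $j = 1, \dots, M$ with $M := \lceil\log_2 a_n^{-1}\rceil$, and symmetrically on the negative side. For each $j$ pick a real probe $z_j$ inside $J_j$ and apply Jensen's inequality to concentric complex disks $D_j\subset D_j'$ centered at $z_j$ of radii $\asymp 2^{-j}$, chosen so that $J_j\subset D_j$ and $D_j'\subset\{|z|<1\}$:
\[
N_{P_n}(J_j) \le N_{P_n}(D_j) \le C\bigl(\log\max_{\partial D_j'}|P_n| - \log|P_n(z_j)|\bigr).
\]
By \eqref{cond_ci_rho}, the variance $\sigma(z_j)^2 := \sum c_i^2 z_j^{2i}$ satisfies $\log\sigma(z_j)\asymp j$. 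A moment estimate for $\max_{\partial D_j'}|P_n|$ using \eqref{cond_xi} gives $\|\log\max_{\partial D_j'}|P_n| - \log\sigma(z_j)\|_{L^k} = O(j)$; anti-concentration for the linear combination $P_n(z_j) = \sum c_i z_j^i\xi_i$, valid under the weak $(2+\varepsilon)$-moment hypothesis in \eqref{cond_xi}, gives $\|\log(\sigma(z_j)/|P_n(z_j)|)\|_{L^k} = O(1)$. Combining, $\|N_{P_n}(J_j)\|_{L^k} = O(j)$. By Minkowski's inequality, $\|N_{P_n}(I_1)\|_{L^k}\le\sum_{j=1}^{M} O(j) = O(M^2) = O((\log a_n^{-1})^2)$, so $\E N_n^k(I_1)\ll(\log a_n^{-1})^{2k}$; the same argument for $Q_n$ handles $I_3$.

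For $I_2$, I separate into the cases $b_n < 1/n$ and $b_n\ge 1/n$. When $b_n < 1/n$, the entire window lies in disks of radius $O(1/n)$ around $\pm 1$, and a single Jensen application gives $\E N_n^k(I_2) = O(1)$, absorbed in the bulk bound. When $b_n\ge 1/n$, cover each edge piece by $O(\log(nb_n))$ dyadic annuli at distances $2^{-j}b_n$ from $\pm 1$, for $j = 0, \dots, \lceil\log_2(nb_n)\rceil$, and apply the same Jensen/anti-concentration scheme at the edge scale. Here $\sigma(z)$ and $\max|P_n|$ are of comparable order with no additional $j$-factor (the self-similar behavior at the $1/n$-scale near $\pm 1$), so per-window $\|N_n\|_{L^k} = O(1)$, summing to $\|N_n(I_2)\|_{L^k} = O(\log(nb_n))$ and $\E N_n^k(I_2)\ll\log^k(nb_n)$. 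The main technical obstacle is obtaining the per-window anti-concentration estimate for the weighted sum $\sum c_i z_j^i\xi_i$ uniformly in the dyadic index $j$ under only the weak $(2+\varepsilon)$-moment hypothesis \eqref{cond_xi} (rather than a sub-Gaussian or Gaussian assumption), combined with the precise variance scaling $\log\sigma(z_j)\asymp j$ that exploits the polynomial growth in \eqref{cond_ci_rho}; a secondary point is verifying that the reversed polynomial $Q_n$ indeed falls under the bulk argument despite the shifted coefficient indexing.
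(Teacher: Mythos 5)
Your decomposition (bulk / edge windows near $\pm1$ / tail, with the tail reduced to the bulk via the reversed polynomial) matches the paper's in spirit, and your bulk bound $\E N_n^k(I_1)\ll(\log a_n^{-1})^{2k}$ via dyadic intervals, Jensen's inequality, and Minkowski in $L^k$ is essentially the paper's argument (their Lemma on bounded numbers of roots gives exactly $\|N_n(J_j)\|_{L^k}=O(j)$ per dyadic window). However, there are genuine gaps in the two places where the real work lies.

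First, your anti-concentration step $\|\log(\sigma(z_j)/|P_n(z_j)|)\|_{L^k}=O(1)$ is asserted at a \emph{single real point} $z_j$, and under only the $(2+\varepsilon)$-moment hypothesis this is not available: for discrete atom variables the small-ball probability $\P(|P_n(z_j)|\le\sigma e^{-t})$ need not decay in $t$ fast enough (and uniformly in $n$) to make all moments of the logarithm bounded. The paper's device is different and essential: Jensen is applied with $M_2=\sup_{B(z,\delta/2)}|P_n|$, a supremum over a disk, and the lower bound on $M_2$ comes from an anti-concentration lemma that only guarantees the existence of \emph{one good angle} $\theta'$ in an arc contained in the disk at which $|P_n(re^{i\theta'})|$ is not too small with high probability. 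You flag this as ``the main technical obstacle'' but do not resolve it; the single-point formulation would have to be replaced by the sup-over-arc formulation for the argument to go through.

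Second, and more seriously, your edge bound is not proved. The claim that each dyadic edge window at scale $\delta\in[a_n/n,b_n]$ contributes $\|N_n\|_{L^k}=O(1)$ ``with no additional $j$-factor'' because of ``self-similar behavior'' does not follow from the Jensen/anti-concentration scheme, which only yields $O(\log(1/\delta))=O(\log n)$ per window and hence $\E N_n^k(I_2)\ll\log^{2k}n$ after summing --- far larger than the target $\log^k(nb_n)$ (which is $O(1)$ when $b_n\asymp1/n$). To get the per-window $O(1)$ $k$-th moment and the first moment $\ll\log(nb_n)$, the paper first invokes the universality result (Corollary \ref{cor:moment_universality} restricted to these intervals) to replace the $\xi_i$ by Gaussians, and only then uses the Kac--Rice formula (with a precise estimate of the integrand) for the first moment, together with a Gaussian-only small-ball lemma (Rolle's theorem plus Gaussian density and hypercontractivity) giving $\P(N_n(x,y)\ge j)\ll(C\delta)^{js}$ for the higher moments. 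Similarly, for the piece within distance $a_n/n$ of $1$, a ``single Jensen application'' gives only $O(\log^kn)$, not $O(1)$; the paper needs the Kac--Rice computation $\E\tilde N_n(1-a_n/n,1)\ll a_n$ interpolated against the crude moment bound. Without the universality reduction and the Gaussian machinery, the two-case form of the estimate --- which is the whole content of the proposition --- is not obtained.
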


To prove Theorem \ref{thm:CLT_general} and Lemma \ref{lemma:variance}, we use the universality results stated in Theorem \ref{thm:distribution_universality}, Corollary \ref{cor:moment_universality} and Proposition \ref{prop:boundedge} to reduce to the Gaussian case (i.e., the case in which the $\xi_i$ are iid standard Gaussian) with 
roots restricted to  the core $\mj$. In particular, we prove

\begin{lemma} \label{lm:CLT_gau}
Assume that the polynomial $P_n$ satisfies Conditions \eqref{cond_xi}-\eqref{cond_ci_rho}. Let $c<1$ be any positive constant, then for any $a_n, b_n$ satisfying 
\begin{equation}\label{eq:cond:ab:CLT}
	(\log n)^{2}/n\le b_n<a_n\le \exp\left (-(\log n)^{c}\right ), \quad \log\frac{a_n}{b_n}=\Theta(\log n), \quad \text{and} \quad \Var \tilde N_n(\mj) \gg \log n,
\end{equation}
we have
$$\frac{\tilde{N}_{n}(\mj)-\E \tilde{N}_{n}(\mj)}{\sqrt{\Var\tilde{N}_{n}(\mj)}}   \xrightarrow{\text{ d }}  \mathcal N(0,1).$$ 
\end{lemma}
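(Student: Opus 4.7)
The plan is to prove the Gaussian CLT via a Bernstein-type blocking argument combined with a Gaussian decorrelation estimate for the field $(\tilde P_n(x))_{x\in\mj}$. By the symmetry $x\mapsto -x$ of the Gaussian atoms and the reciprocal symmetry $x\mapsto 1/x$ of real roots, it suffices to handle the count on one component, say $I_n=(1-a_n,1-b_n)$. I pass to the logarithmic coordinate $s=-\log(1-x)$, under which $I_n$ is carried to an interval $T_n\subset [(\log n)^{c}, \log n-2\log\log n]$ of length $\log(a_n/b_n)=\Theta(\log n)$ by \eqref{eq:cond:ab:CLT}. In this coordinate the normalized Gaussian process $s\mapsto \tilde P_n(1-e^{-s})/\sqrt{\E\tilde P_n(1-e^{-s})^2}$ is approximately stationary with normalized correlation decaying as $O(e^{-\gamma|s-t|})$ for some $\gamma>0$ throughout $T_n$; this can be extracted from the explicit covariance $\sum_{i=0}^n c_i^2(xy)^i$ together with the polynomial growth $|c_i|\asymp i^\rho$.

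Partition $T_n$ into an alternating sequence of $M$ big blocks of length $L$ and $M-1$ gaps of length $\ell$, with $\ell\to\infty$, $L/\ell$ diverging fast enough that $M\ell=o(\sqrt{\log n})$, and $ML=\Theta(\log n)$; for instance $\ell=(\log\log n)^2$ and $L=\sqrt{\log n}\,(\log\log n)^3$. Let $Y_j$ be the number of real zeros of $\tilde P_n$ in the pullback to $x$-coordinates of the $j$-th big block, and let $Z_j$ be the analogous count on the $j$-th gap. A Kac--Rice first-moment estimate in the bulk gives density $O(1)$ per unit of $s$, whence $\E\bigl(\sum_j Z_j\bigr)^2=O((M\ell)^2+M\ell)=o(\log n)$. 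Therefore $\tilde N_n(I_n)=\sum_j Y_j+o(\sigma_n)$ in $L^2$, and the CLT reduces to one for $\sum_j Y_j$.

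Next I decouple. Let $\tilde P_n^{(j)}$ be independent copies of $\tilde P_n$, and let $Y_j'$ be the corresponding block count computed from $\tilde P_n^{(j)}$. Via Gaussian interpolation along a path between the coupled field $(\tilde P_n,\dots,\tilde P_n)$ and the decoupled field $(\tilde P_n^{(1)},\dots,\tilde P_n^{(M)})$, applied to a mollified Kac--Rice representation of $Y_j$ so that the functional is smooth in the Gaussian data, the difference of joint characteristic functions of $(Y_j)$ and $(Y_j')$ is controlled by $\sum_{j\ne j'}\sup_{x,y}r(x,y)^2$, where $r$ is the cross-block normalized correlation. By the exponential decay established above this sum is $O(M^2e^{-2\gamma\ell})=o(1)$. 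It then remains to apply the Lindeberg--Feller CLT to $\sum_j Y_j'$: the lower bound $\sum_j\Var Y_j'=\Omega(\log n)$ follows from $\Var\tilde N_n(\mj)=\Omega(\log n)$ together with the preceding $L^2$ comparisons, and uniform $L^{2+\delta}$ control of each $Y_j'$ is a standard Kac--Rice factorial moment bound on an $O(1)$-length $s$-interval.

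The hardest step is the decorrelation. Zero counts are highly non-linear functionals of the Gaussian field, so the Gaussian interpolation must be performed on a carefully mollified version of the Kac--Rice density, with the mollification errors controlled uniformly across $T_n$ and the interpolation bounds propagating cleanly through the logarithmically many blocks. Moreover, in the full range $\rho>-1/2$ the covariance kernel $\sum c_i^2(xy)^i$ has no closed form, so the required uniform exponential decay of the normalized correlation $r$ must be obtained by careful asymptotic analysis of this sum, remaining valid all the way out to the scale $s=\log n$. These two uniformity issues are where essentially all of the technical work will sit.
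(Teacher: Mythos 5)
Your overall architecture (pass to the logarithmic coordinate, Bernstein blocking with big blocks and small gaps, kill the gaps in $L^2$, decouple the big blocks, apply Lindeberg--Feller, and recover the variance lower bound from the hypothesis $\Var \tilde N_n(\mj)=\Omega(\log n)$ plus the $L^2$ comparisons) matches the paper's Step 3, and your correlation-decay computation for $r(x,y)$ in the variable $s=-\log(1-x)$ is consistent with what the paper establishes via $s_{ij}=\sqrt{(1-x^2)(1-y^2)}/(1-xy)$. Where you genuinely diverge is the decoupling mechanism: the paper does not attempt a soft Gaussian comparison between the coupled and decoupled fields. Instead it first replaces the zero count by the number of sign changes $N_n^{\sign}$ on a mesh of width $\delta=(\log n)^{-\beta}$ (Lemma \ref{lmm:tosignchange}, powered by the small-ball/repulsion estimate of Lemma \ref{multipleroot}), and then replaces $P_n$ by a hard truncation $Q(x)=\sum_{j=m_x}^{M_x}c_j\xi_j x^j$ whose index windows are disjoint once $\log\frac{1-x}{1-y}\ge 2\alpha\log\log n$ (Lemma \ref{lm:obser1}); the resulting block sums are then \emph{exactly} independent, and no comparison inequality for non-linear functionals of correlated Gaussians is ever needed.

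The reason the paper goes to this trouble is precisely the step you leave as an assertion, and as stated that step has a genuine gap. You claim the difference of joint characteristic functions of $(Y_j)$ and $(Y_j')$ is ``controlled by $\sum_{j\ne j'}\sup_{x,y}r(x,y)^2$.'' A Gaussian interpolation (smart-path) bound controls $|\E h(F)-\E h(G)|$ by covariance differences \emph{times second derivatives of $h$}; for a mollified Kac--Rice functional those derivatives carry factors of $\ep^{-O(1)}$ from the approximate delta, and the mollification error itself must be made $o(1)$ after summing over an $s$-interval of length $\Theta(\log n)$, which forces $\ep$ to be at least polylogarithmically small (and a priori possibly polynomially small in $n$, since $\tilde P_n'$ has size $n^{O(1)}$ near $1-b_n$). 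Your bound $O(M^2e^{-2\gamma\ell})=o(1)$ ignores these factors entirely; whether $e^{-2\gamma(\log\log n)^2}$ beats $\ep^{-O(1)}$ is exactly the uniformity question you defer, and it is not a routine verification -- it requires quantitative anti-concentration for $(\tilde P_n(x),\tilde P_n'(x))$ uniformly over $\mj$ and a careful accounting of how many point-pairs per block-pair enter the interpolation. A secondary (fixable) inaccuracy: your big blocks have $s$-length $L=\sqrt{\log n}\,(\log\log n)^3$, not $O(1)$, so the uniform $L^{2+\delta}$ bound on $Y_j'$ is not a single-interval factorial-moment estimate; the paper instead proves a fourth-moment bound of order $L^2(\log\log n)^2$ per block (Lemma \ref{lm:boundforthmoment}) by a further sub-blocking, and you would need an analogue of this to run Lyapunov's condition.
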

And we also prove the following special case of Lemma \ref{lemma:variance} for Gaussian.
\begin{lemma} \label{lm:var_gau}
	Assume that the polynomial $P_n$ satisfies Conditions \eqref{cond_xi}-\eqref{cond_ci_rho} and there exist constants $C, \ep >0$ such that for all 
	$i\in \left [n-n\exp\left (-\log^{1/5}n\right ), n-\exp\left (\log^{1/5}n\right )\right ]$,
	\begin{equation} 
		\frac{|c_{i}|}{|c_n|} - 1\le C\exp\left (-\left (\log\log n\right )^{1+\varepsilon}\right ).\nonumber
	\end{equation}
	Then
\begin{equation}
 \Var\tilde{N}_{n}(\R) \gg \log n.\nonumber
\end{equation}
\end{lemma}

To  illustrate the method of universality, we include here the short proof of Theorem \ref{thm:CLT_general} and Lemma \ref{lemma:variance} assuming the Gaussian case (Lemma \ref{lm:CLT_gau} and Lemma \ref{lm:var_gau}) together with the universality results (Corollary \ref{cor:moment_universality} and Proposition \ref{prop:boundedge}). 
\begin{proof}[Proof of  Lemma \ref{lemma:variance}]
	We first choose $a_n$ and $b_n$ that satisfy all the conditions in Corollary \ref{cor:moment_universality} and make the right-hand side of \eqref{eq:prop:boundedge} as small as $o(\log n)$ when $k=2$. In particular, we let 
	$$a_n = \exp\left (-\log^{1/5}n\right), \quad  b_n = \frac{1}{na_n},$$
	and
	\begin{equation} 
	\mj =\pm (1-a_n, 1-b_n) \cup \pm (1-a_n, 1-b_n)^{-1}\nonumber.
	\end{equation}
	By the triangle inequality on the 2-norm, we obtain
	\begin{equation}\label{eq:var:diff}
	\left |\sqrt{\Var N_n(\R)}-\sqrt{\Var N_n(\mj)} \right |\le \sqrt{\Var N_n(\R\setminus \mj)} \le \sqrt{\E N_n^{2}(\R\setminus \mj)} = o\left (\sqrt{\log n}\right )
	\end{equation}
	where in the last equation, we used Proposition \ref{prop:boundedge}.
	Since $\tilde P_n$ is just a special case of $P_n$ (where the random variables $\xi_i$ are iid Gaussian), we also have
	\begin{equation} 
	\left |\sqrt{\Var \tilde N_n(\R)}-\sqrt{\Var \tilde N_n(\mj)} \right |=o\left (\sqrt{\log n}\right ).\nonumber
	\end{equation}
	Combining this with Lemma \ref{lm:var_gau}, we obtain
\begin{equation}\label{eq:var:mj:gau}
\sqrt{\Var \tilde N_n(\mj)} = \sqrt{\Var \tilde N_n(\R)} + o\left (\sqrt{\log n}\right )\gg \sqrt{\log n}.
\end{equation}
Applying Corollary \ref{cor:moment_universality} and \eqref{eq:var:mj:gau} yields
$$\Var N_{n}(\mj)=\Var \tilde N_{n}(\mj) + O(a_n^{c}) = \Var \tilde N_{n}(\mj) + o(\log n) \gg \log n.$$
From this and \eqref{eq:var:diff}, 
$$\sqrt{\Var N_n(\R)} = \sqrt{\Var N_n(\mj)}  + o\left (\sqrt{\log n}\right )\gg \sqrt{\log n}.$$
 This completes the proof.
\end{proof}

\begin{proof}[Proof of Theorem \ref{thm:CLT_general}]
	Let $a_n, b_n$ and $\mj$ be as in the proof of Lemma \ref{lemma:variance}. By the assumption that $\sigma_n = \sqrt{\Var N_n(\R)}\gg \sqrt{\log n}$ and by \eqref{eq:var:diff}, we have
	$$\sqrt{\Var N_n(\mj)} = \sigma_n(1+o(1))\gg \sqrt{\log n}.$$
	 By this and Corollary \ref{cor:moment_universality}, we also have 
	 \begin{equation}\label{eq:verify}
	 \sqrt{\Var \tilde N_n(\mj)} = \sqrt{\Var N_n(\mj)} + o(1) = \sigma_n(1+o(1))\gg \sqrt{\log n}.
	 \end{equation}
	 Thus, \eqref{eq:cond:ab:CLT} holds and so we can apply Lemma \ref{lm:CLT_gau} to get
	$$
	\frac{\tilde N_{n}(\mj)-\E \tilde{N}_{n}(\mj)}{\sqrt{\Var \tilde{N}_{n}(\mj)}}\xrightarrow{\text{ d }}\mathcal{N}(0,1) .
	$$
	Hence,
	$$
	\frac{N_{n}(\mj)-\E \tilde{N}_{n}(\mj)}{\sqrt{\Var \tilde{N}_{n}(\mj)}}\xrightarrow{\text{ d }}\mathcal{N}(0,1)
	$$
	because by \eqref{eq:dist:N:tildeN}, for any fixed $a\in \R$,
	$$\P\left (\frac{N_{n}(\mj)-\E \tilde{N}_{n}(\mj)}{\sqrt{\Var \tilde{N}_{n}(\mj)}}\le a\right ) = \P\left (\frac{\tilde N_{n}(\mj)-\E \tilde{N}_{n}(\mj)}{\sqrt{\Var \tilde{N}_{n}(\mj)}}\le a\right ) + o(1) \xrightarrow [n\to\infty]{} \P(\mathcal{N}(0,1) \le a).$$
	By Corollary \ref{cor:moment_universality}, 
	$$\E N_{n}(\mj)-\E \tilde{N}_{n}(\mj) = o(1).$$

	Combining these with \eqref{eq:verify}, we get
\begin{equation}\label{clt:mj}
\frac{N_{n}(\mj)-\E N_{n}(\mj)}{\sigma_{n}}\xrightarrow{\text{ d }}\mathcal{N}(0,1) .
\end{equation}
	From Proposition \ref{prop:boundedge}, we have
$$\E N_{n}(\R\setminus \mj) \ll \log^{2/5}n.$$
By Markov's inequality, for any fixed $a>0$, we have
\begin{eqnarray}
	\P\left (\left |\frac{N_{n}(\R\setminus \mj)-\E N_{n}(\R\setminus \mj)}{\sigma_{n}}\right |\ge a\right ) &\le& \frac{1}{a\sigma_n} \E\left |N_{n}(\R\setminus \mj)-\E N_{n}(\R\setminus \mj)\right |\nonumber\\
	& \ll& \frac{\log^{2/5}n}{a\log^{1/2}n}\xrightarrow [n\to\infty]{} 0.\nonumber
\end{eqnarray}
	Thus,
\begin{equation}\label{clt:mj:0}
\frac{N_{n}(\R\setminus \mj)-\E N_{n}(\R\setminus \mj)}{\sigma_{n}}\xrightarrow{\text{ d }}0.
\end{equation}
Adding \eqref{clt:mj} and \eqref{clt:mj:0} completes the proof. 
 	\end{proof}

 In Section \ref{sec:prop:var_gau},  we use universality again to prove Lemma \ref{lm:var_gau}. But in this case, we will  reduce general coefficients $c_i$ to the case
 when  $c_i=1$. In other words,  we  could swap random variables with different means or variances. This deviates   significantly from standard swapping arguments that swap random variables with the  same mean and variance.

The rest of the paper is organized as follows. Section \ref{sec:distribution_universality} is devoted to the proof of Theorem \ref{thm:distribution_universality}, Section \ref{sec:cor:moment_universality} for Corollary \ref{cor:moment_universality}, Section \ref{sec:prop:boundedge} for Proposition \ref{prop:boundedge}, Section \ref{sec:lm:CLT_gau} for Lemma \ref{lm:CLT_gau}, and Section \ref{sec:prop:var_gau} for Lemma \ref{lm:var_gau}.

 \section{Proof of Theorem \ref{thm:distribution_universality}} \label{sec:distribution_universality}
Under the hypothesis of Theorem \ref{thm:distribution_universality}, we need to show that
\begin{equation}\label{eq:f1:1}
\left|\E F(N_n (\mj))-\E F\left (\tilde N_n(\mj)\right )\right|\leq Ca_n^{c}+ Cn^{-c}.
\end{equation}

We first restrict to the interval $(0, 1)$ and prove that
\begin{equation}\label{eq:f1:2}
\left|\E F(N_n (\mj\cap (0, 1)))-\E F\left (\tilde N_n(\mj\cap (0, 1))\right )\right|\leq Ca_n^{c}+ Cn^{-c}.
\end{equation}

The proof of \eqref{eq:f1:1} follows from the same arguments with some (merely technical) modifications explained in Section \ref{section:reduction:01}.  We choose to start by presenting the proof of \eqref{eq:f1:2} as it already captures all of the ideas without having to deal with the tedious technical and notational complications detailed in Section \ref{section:reduction:01}. This way, it would make the proofs clearer and easier to follow.

\subsection{Partition into dyadic intervals and preliminary results} \label{subsec:partition}

Recall that 
$$\mj\cap (0, 1) = (1-a_n, 1-b_n).$$
We start by partitioning the interval $(1-a_n, 1-b_n)$ into dyadic intervals: $(1-a_n, 1-a_n/2), [1-a_n/2, 1-a_n/4), \dots$ To be more specific, let $\delta_{i}:=a_n/2^{i}$ for $i=0, \dots, M-1$ where $M$ is the smallest number such that $a_n/2^{M}\leq \max\{1/n, b_n\}$. Let $\delta_M := \max\{1/n, b_n\}$. Note that $M\ll \log n$. For each $i\le M-1$, let $N_{i}$ be the number of real roots of $P_{n}$ in the interval $[1-\delta_{i-1}, 1-\delta_{i})$. Let $N_{M}$ be the number of real roots of $P_{n}$ in the interval $[1-\delta_{M-1}, 1-b_n)$. We have, $N_{n} (1-a_n, 1-b_n)=N_{1}+\cdots+N_{M}$.\quad \footnote{ If $a_n\le 1/n$, we set $M = 1, \delta_0 = \delta_1 = 1/n$ and $N = N_1$ to be the number of real roots of $P_n$ in the interval $(1-a_n, 1-b_n)$.\newline Generally, there is no difference in our proof if an interval of interest includes one of its endpoints or not. So, for example, if one cares about $N_n[1-a_n, 1-b_n)$ instead of $N_n(1-a_n, 1-b_n)$, one can use the exact same analysis.}

For a dyadic interval $(1 - \delta, 1 - \delta/2)$, we can control the moments of the number of roots. More generally, the following result works not just for dyadic intervals but also for balls on the complex plane.
	\begin{lemma}[Bounded number of roots]\label{lm:bddness} For any positive constants $A$ and $k$, there exists a constant $C$ such that for every $n \ge C$, every $ 1/n\le \delta\le 1/C$ and $z\in \C$ with $1-2\delta\le |z|\le 1-\delta+1/n$, we have
	\begin{equation}\label{eq:tail}
	\P\left  (N_{n}\left (B\left (z, \delta/2\right )\right )\ge C\log (1/\delta)\right )\le C\delta^{A},
	\end{equation}
	and
	\begin{equation}
	\E N_{n}^{k}\left (B\left (z, \delta/2\right )\right )\le C\log^{k} (1/\delta)\label{eq:secondmoment}
	\end{equation}
			where $B(z, R)$ is the disk with center $z$ and radius $R$ in the complex plane.
\end{lemma}

As a consequence, for $ 1/n\le \delta\le 1/C$ and for the dyadic interval $[1 - \delta, 1 - \delta/2]$, applying Lemma \ref{lm:bddness} for $z = 1 - 3\delta/2$, we obtain
\begin{equation} 
\P\left  (N_{n}\left (1 - \delta, 1 - \delta/2\right )\ge C\log (1/\delta)\right )\le C\delta^{A},\nonumber
\end{equation}
and
\begin{equation}
\E N_{n}^{k}\left (1 - \delta, 1 - \delta/2\right )\le C\log^{k} (1/\delta).\nonumber
\end{equation}
\begin{proof}[Proof of Lemma \ref{lm:bddness}]

We shall prove that for a large constant $C$ and for every $a\in [1, n\delta]$, 
\begin{equation}
\P\left (N_{n}\left (B(z, \delta/2)\right ) \ge Ca-C\log \delta\right )\ll a^{-A}\delta^{A}\label{eq:bddness:1}
\end{equation}
where the implicit constant only depends on $A$ and $C$.
Setting $a=1$, we obtain \eqref{eq:tail}. Setting $A = 2k$, letting $a$ run from $1$ to $n\delta$ and using the fact that $N_{n}(B(z, \delta/2))\le n$ with probability 1, we obtain 
\begin{eqnarray}
\E N_{n}^{k}\left (B\left (z, \delta/2\right )\right )&\le& \left (C-C\log \delta\right )^{k}+\sum_{a = 1}^{n\delta} \left (C(a+1)-C\log \delta\right )^{k} \left (a^{-2k}\delta^{2k}\right ) + n^{k}(n\delta)^{-2k}\delta^{2k}\nonumber\\
&\ll& \log^{k} (1/\delta)\nonumber
\end{eqnarray}
where in the first inequality, the first term bounds $\E N_{n}^{k}\left (B\left (z, \delta/2\right )\right )$ on the event $$N_{n}\left (B\left (z, \delta/2\right )\right )\le C-C\log \delta,$$ the second term comes from the events $Ca-C\log \delta\le N_{n}\left (B\left (z, \delta/2\right )\right )< C(a+1)-C\log \delta$ for each $a$, and the third term comes from the event $N_{n}\left (B\left (z, \delta/2\right )\right )\ge C(n\delta+1)-C\log \delta$.
This proves \eqref{eq:secondmoment}, completing the proof. 

It remains to prove \eqref{eq:bddness:1}. To that end, we use the following version of Jensen's inequality which asserts that for every entire function $f$, every $z\in \C$ and $0<r< R$, 
\begin{equation}
N_{f}\left (B(z, r)\right )\le \frac{\log \frac{M_1}{M_2}}{\log \frac{R^{2}+r^{2}}{2Rr}} \label{eq:Jensen}
\end{equation}
where $M_1 = \sup_{w\in B(z, R)}|f(w)|$ and $M_2 = \sup_{w\in B(z, r)}|f(w)|$. This is a consequence of the classical Jensen's formula (see, for example, \cite{Ru}). We add a proof of this inequality in Appendix \ref{app:jensen} for completeness.

Applying Jensen's inequality to the polynomial $P_n$ gives
\begin{equation}
N_{n}\left (B(z, \delta/2)\right )\ll \log \frac{M_1}{M_2}\label{eq:bddness:0}
\end{equation}
where $M_1=\sup_{w\in B(z, 2\delta/3)}|P_n(w)|$ and $M_2 = \sup_{w\in B(z, \delta/2)}|P_n(w)|$.

From \eqref{eq:bddness:0}, to prove \eqref{eq:bddness:1}, it suffices to show that
\begin{equation}
\P\left (M_1\ge \exp\left (Ca-C\log \delta\right )\right )\ll a^{-A}\delta^{A}\label{eq:bddness:2}
\end{equation}
and 
\begin{equation}
\P\left (M_2\le \exp\left (-Ca+C\log \delta\right )\right )\ll a^{-A}\delta^{A}\label{eq:bddness:3}.
\end{equation}

Since 
$$M_1\le \sum_{i=0}^{n} |c_i||\xi_i||z|^{i},$$
it follows that $\E M_1\ll \delta^{-O(1)}$ by Conditions \eqref{cond_xi} and \eqref{cond_ci_rho}. The bound \eqref{eq:bddness:2} then follows from Markov's inequality.

For \eqref{eq:bddness:3}, writing $z = re^{i\theta}$ and observing that the set $\{w=re^{i\theta'}: \theta'\in [\theta-\delta/10, \theta+\delta/10]\}$ is a subset of $B(z, \delta/2)$, we have
\begin{eqnarray}
&&\P\left (M_2 \le \exp\left (-Ca+C\log \delta\right )\right )  \nonumber\\
&& \quad \le\P\left (\sup_{\theta'\in [\theta-\delta/10, \theta+\delta/10]}  \left |\sum_{j=0}^{n} c_j \xi_j r^{j}e^{ij\theta'}  \right | \le \exp\left (-Ca+C\log \delta\right )\right ) \nonumber.
\end{eqnarray}
By taking the supremum outside, the right-hand side is at most 
$$\sup_{\theta'\in [\theta-\delta/10, \theta+\delta/10]}\P\left (\left |\sum_{j=0}^{n} c_j \xi_j r^{j}e^{ij\theta'}\right | \le \exp\left (-Ca+C\log \delta\right )\right )$$
and hence, by projecting onto the real line and conditioning on the random variables $(\xi_j)_{j\notin [1, a/\delta]}$, it is bounded by 
$$\sup_{\theta'\in [\theta-\delta/10, \theta+\delta/10]}\sup_{Z\in \R}\P\left (\left |\sum_{j=1}^{a/\delta} c_j \xi_j r^{j}\cos(j\theta')-Z\right | \le \exp\left (-Ca+C\log \delta\right )\right ).$$

We use the following anti-concentration lemma from \cite{nguyenvurandomfunction17}.
\begin{lemma}\cite[Lemma 9.2]{nguyenvurandomfunction17} \label{lmanti_concentration}
	Let $\mathcal E$ be an index set of size $M\in \mathbb N$, and let $(\xi_j)_{j\in \mathcal E}$ be independent random variables satisfying Condition \eqref{cond_xi}. Let $(e_j)_{j\in \mathcal E}$ be deterministic (real or complex) coefficients with $|e_j|\ge \bar e$ for all $j$ and for some number $\bar e\in \R_+$. Then for any constant $B\ge 1$, any interval $I\subset\R$ of length at least $M^{-B}$, there exists $\theta'\in I$ such that
	$$\sup_{Z\in \R}\P\left (\left| \sum_{j\in \mathcal E} e_j \xi_j \cos(j\theta')-Z\right |\le \bar e M^{-16B^{2}}\right )\ll M^{-B/2},$$  
	where the implicit constant depends only on $B$ and the constants in Condition \eqref{cond_xi}. 
\end{lemma}
Applying Lemma \ref{lmanti_concentration} with $B = 2A, \mathcal E = [1, a/\delta], M = a/\delta$, $I = [\theta-\delta/10, \theta+\delta/10]$, $e_j = c_jr^{j}$ and $\bar e = \frac{\delta}{ae^{3a}}$ (where we use Condition \eqref{cond_ci_rho} and the assumption that $r = |z|\ge 1-2\delta$ to get  $|e_j|\ge \bar e$), we obtain $\theta'\in [\theta-\delta/10, \theta+\delta/10]$ such that for a sufficiently large constant $C$,
\begin{equation}
\sup_{Z\in \R}\P\left (\left |\sum_{j=1}^{a/\delta} c_j \xi_j r^{j}\cos(j\theta')-Z\right | \le \exp\left (-Ca+C\log \delta\right )\right )  \ll (a/\delta)^{-A}= a^{-A}\delta^{A}\nonumber
\end{equation}
which gives \eqref{eq:bddness:3} and completes the proof of Lemma \ref{lm:bddness}.
\end{proof}

\subsection{A generalization}
Theorem \ref{thm:distribution_universality} is deduced from the following more general result that can be of independent interest.
\begin{proposition}\label{prop:univ:1M}
Let $\hat{F}$ : $\mathbb{R}^{M}\rightarrow \mathbb{R}$  be any function whose every partial derivative up to order $3$ is bounded by $1$. We have
$$
\left |\E \hat{F}(N_{1}, \ldots, N_{M})-\E \hat{F}\left (\tilde{N}_{1}, \ldots,\tilde{N}_{M}\right )\right |\ll \delta_0^{c}.
$$
\end{proposition}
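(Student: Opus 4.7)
The plan is to prove Proposition~\ref{prop:univ:1M} by a Lindeberg-type swapping argument combined with a smoothing of the integer-valued counts $N_i$. The overall structure is: (i) approximate $\hat F(N_1,\ldots,N_M)$ by a smooth functional $G(\xi_0,\ldots,\xi_n)$ up to additive error $O(\delta_0^c)$ in expectation; (ii) swap the independent coordinates $\xi_0,\ldots,\xi_n$ one at a time into independent standard Gaussians, controlling the change in $\E G$ at each step by Taylor expanding to third order and using the matched first two moments together with the $(2+\varepsilon)$-moment bound; (iii) sum the telescoping errors.

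For step (i), I would cover each dyadic interval $I_i=[1-\delta_{i-1},1-\delta_i)$ by a deterministic grid of polynomial (in $n$) size and argue, via anti-concentration and regularity estimates for $P_n$ near $x=1$, that with probability at least $1-\delta_0^c$ every real root of $P_n$ in $I_i$ lies far from every grid point, $P_n$ has no double root in $I_i$, and $|P_n'|$ is not too small at its roots. On this good event $N_i$ equals the grid sign-change count $\sum_k\mathbf 1\{P_n(x_k)P_n(x_{k+1})<0\}$. Replacing each indicator by a smoothed bump depending on $(P_n(x_k),P_n(x_{k+1}))$, mollified at a scale $\eta$ chosen below, produces a smooth functional $\widehat N_i$ with $\E|N_i-\widehat N_i|\le\delta_0^c$ (up to polylogarithmic factors from $M=O(\log n)$). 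Because the partial derivatives of $\hat F$ are bounded by $1$, composing $\hat F$ with $(\widehat N_1,\ldots,\widehat N_M)$ gives the desired smooth $G$ and shrinks the question from the rough count to a $C^3$ functional of $(\xi_0,\ldots,\xi_n)$.

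With $G$ smooth, the Lindeberg telescoping yields
\begin{equation*}
\bigl|\E G(\xi_0,\ldots,\xi_n)-\E G(\tilde\xi_0,\ldots,\tilde\xi_n)\bigr|\le\sum_{i=0}^{n}\bigl|\E R_i(\xi_i)-\E R_i(\tilde\xi_i)\bigr|,
\end{equation*}
where $R_i$ is the order-$3$ Taylor remainder of $G$ in the $i$-th variable (after integrating the others out). Since the first two moments of $\xi_i$ and $\tilde\xi_i$ agree for $i\ge N_0$, a standard $(2+\varepsilon)$-moment Lindeberg estimate (truncating $|\xi_i|$ at some level $t$ and using the second-derivative norm on the tail and the third-derivative norm on the bulk) bounds each summand by a quantity proportional to $|c_i|^{2+\varepsilon}$ times $\sup_k|x_k^i|^{2+\varepsilon}$ times a negative power of the mollification scale $\eta$. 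Because $|x_k^i|\le(1-b_n)^i$ concentrates the main contribution on $i\lesssim\delta_0^{-1}$ and the variance of $P_n(x_k)$ at core points grows like a positive power of $\delta_0^{-1}$, the \emph{normalised} sensitivity decays like a positive power of $\delta_0$; optimising $\eta$ and $t$ against the grid size and $M$ gives total swap error $O(\delta_0^c)$, which combines with the smoothing error of step (i) to prove the proposition.

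The main obstacle is step (i), and more specifically producing the joint anti-concentration / ``no near-singular root'' estimate uniformly over all distributions satisfying Conditions~\eqref{cond_xi}--\eqref{cond_ci_rho}. A single-point small-ball bound on $\P(|P_n(x)|\le t)$ for $x\in I_i$ is classical, but what is needed is a simultaneous bound on $\P(|P_n(x)|+|P_n'(x)|\le t)$ together with a structural ruling out of double real roots at scale $\delta_0^C$, uniformly in the law of the $\xi_j$. Once this regularity input is established (presumably via Kac--Rice-type second-moment computations for the zero set, exploiting the polynomial growth of the $c_i$'s), the rest of the argument is mechanical; the $(2+\varepsilon)$-moment assumption is precisely what is needed for the single-swap Taylor error to beat the matching of the first two moments.
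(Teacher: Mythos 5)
Your overall architecture (smooth the integer count, then do a third-order Lindeberg swap exploiting matched first and second moments plus the $(2+\varepsilon)$-moment bound and the delocalization $\max_j|c_jx^j|/\sqrt{V(x)}\ll\delta_i^{\alpha_1}$) is the same as the paper's, and your accounting of where the $(2+\varepsilon)$ moments and the variance growth enter is accurate. The paper, however, implements the smoothing step very differently: instead of a real grid and mollified sign changes, it approximates $N_i$ by $\sum_j\varphi_i(\zeta_j)$ for smooth compactly supported test functions of the \emph{complex} roots, then uses Green's identity and a Monte Carlo sampling lemma to rewrite this as an average of $\log|P_n(w_{ik})|$ at $m_i=\delta_i^{-11\alpha}$ randomly sampled complex points, so that the Lindeberg swap is performed on a function of the values $P_n(w_{ik})/\sqrt{V(w_{ik})}$ with explicitly controlled derivative losses $\delta_i^{-O(\alpha)}$ (Lemmas \ref{lemma:aproximation1}, \ref{lm:log-universality-general}, \ref{lm:log-universality-smooth}).

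The genuine gap is in your step (i), and you have correctly located it but proposed a route that does not work. To identify $N_i$ with a grid sign-change count up to error $O(\delta_0^c)$ you must rule out, with probability $1-\delta_0^{c}$, the event that some grid cell contains two or more real roots (and control the expected number of roots in the thin boundary slivers). For general atom variables satisfying only Conditions \eqref{cond_xi}--\eqref{cond_ci_rho} -- in particular for discrete $\xi_i$ such as Rademacher -- this repulsion estimate \emph{cannot} be obtained by Kac--Rice-type second-moment computations, since Kac--Rice requires the joint density of $(P_n(x),P_n'(x))$, which need not exist here. This estimate is itself a local universality statement and is the deep input of the whole argument: the paper imports it as Lemma \ref{lm:repulsion} (\cite[Lemma 5.1]{DOV}) and likewise uses \cite[Theorem 2.4]{DOV} to transfer the expected count on the slivers $S_2\cup S_3$ to the Gaussian case before applying Kac--Rice there. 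Without supplying this input your reduction to a smooth functional is unjustified, and everything downstream of it is conditional. A secondary, fixable but unaddressed point: your mollified sign-change functional has derivatives of order $\eta^{-a}$ in the normalized values $P_n(x_k)/\sqrt{V(x_k)}$, and the simultaneous anti-concentration needed to take $\eta$ small enough (uniformly over polynomially many \emph{real} grid points, for possibly discrete $\xi_i$) only yields smallness at scale comparable to $\delta_i^{1/2}$, so the balancing of $\eta$ against the grid size, the number of intervals $M\ll\log n$, and the delocalization exponent must actually be carried out rather than asserted; the paper's choice $m_i=\delta_i^{-11\alpha}$ with derivative bounds \eqref{eq:derivativebound} is exactly this bookkeeping.
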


To deduce \eqref{eq:f1:2} (which is essentially Theorem \ref{thm:distribution_universality} as mentioned at the beginning of this section), let  $\hat{F}$ be the function defined by $\hat{F}(x_{1}, \ldots, x_{M})=F(x_{1}+\cdots+x_{M})$. It is easy to check that  $\left\Vert\partial^{(3)}\hat{F}\right\Vert_{\infty}\le 1$ where $\left\Vert\partial^{(3)}\hat{F}\right\Vert_{\infty} = \max_{\alpha: |\alpha|\le 3} ||\partial^{\alpha}\hat F||_{\infty}$ being the supremum of all partial derivatives up to order 3 of $F$. By applying Proposition \ref{prop:univ:1M} to this $\hat F$, \eqref{eq:f1:2} follows. \qed

The rest of this section is devoted to the proof of Proposition \ref{prop:univ:1M}.
 
 \subsection{Approximate the indicator function by smooth functions}
 To apply analytical tools, we first approximate the indicator function in counting the number of real roots by smooth functions.

 Let $\alpha$ be a sufficiently small positive constant. Let $\varphi_{0}$ be a smooth function taking values in $[0, 1]$, supported on $[-1, 1]$ and equal $1$ at $0$ with $\Vert \phi_{0}^{(a)}\Vert_{\infty}=O(1)$ for all $0\le a\le 3$. For example, we can take the classical bump function
 $$\varphi_0(x) = \begin{cases}
 \exp\left (-\frac{x^{2}}{1 - x^{2}}\right ), \quad\text{if } x\in (-1, 1)\\
 0, \quad\text{otherwise}.
 \end{cases}$$
 
 For $1\leq i\leq M$, let $\phi_{i}$ be a smooth function taking values in $[0, 1]$, supported on $[1-\delta_{i-1}-\delta_{i}^{1+\alpha}, 1-\delta_{i}+\delta_{i}^{1+\alpha}]$ and equal $1$ on $[1-\delta_{i-1}, 1-\delta_{i}]$ with $\Vert \phi_{i}^{(a)}\Vert_{\infty}=O\left (\delta_{i}^{-a(1+\alpha)}\right )$ for all $0\leq a\leq 3$. An example of $\phi_i$ can be obtained by translating and scaling $\phi_0$ as follows.
 $$\varphi_i(x) = \begin{cases}
 1, \quad\text{if } x\in [1-\delta_{i-1}, 1-\delta_{i}]\\
 0, \quad\text{if } x\in [1-\delta_{i-1}-\delta_{i}^{1+\alpha}, 1-\delta_{i}+\delta_{i}^{1+\alpha}], \\
 \phi_0\left (\frac{x - (1-\delta_i)}{\delta_i^{1+\alpha}}\right ), \quad\text{if } x\in [1-\delta_{i}, 1-\delta_{i}+\delta_{i}^{1+\alpha}], \\
 \phi_0\left (\frac{x - (1-\delta_{i-1})}{\delta_i^{1+\alpha}}\right ), \quad\text{if } x\in [1-\delta_{i-1}-\delta_{i}^{1+\alpha}, 1-\delta_{i-1}]. \\
 \end{cases}$$

The indicator of the dyadic interval  $(1-\delta_{i-1}, 1-\delta_{i}]$ shall be approximated by the following function defined on the complex plane
$$\varphi_{i}(z):=\phi_{i}({\rm Re}(z))\varphi_{0}\left (\frac{\mbox{Im}(z)}{\delta_{i}^{1+\alpha}}\right ).$$ 
In other words, the number of roots in $(1-\delta_{i-1}, 1-\delta_{i}]$, which is just $N_i$, is approximated by 
$$\sum_{j=1}^{n}\varphi_{i}(\zeta_{j})$$
where $(\zeta_{j})_{j=1}^{n}$ be the roots of $P_{n}$.

To control the error terms in this approximation, note that for all $0\leq a\leq 3$, 
\begin{equation}\label{eq:der:varphi}
\left\Vert\partial^{(a)}\varphi_{i}\right\Vert_{\infty}=O\left (\delta_{i}^{-a(1+\alpha)}\right ).
\end{equation}

 The following lemma estimates the error term in approximating $N_i$ by $ \sum_{j=1}^{n}\varphi_{i}(\zeta_{j})$.
 
\begin{lemma} \label{lemma:aproximation1} We have
\begin{equation}\label{eq:smooth1}
 \E \hat{F}(N_{1}, \ldots, N_{M})-\E  \hat F\left(\sum_{j=1}^{n}\varphi_{1}(\zeta_{j}), \ldots,\sum_{j=1}^{n}\varphi_{M}(\zeta_{j})\right) \ll \delta_0^{\alpha/8} .
\end{equation}

\end{lemma}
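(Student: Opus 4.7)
The plan is to exploit the Lipschitz property of $\hat{F}$ coming from the bound on its first derivatives, and then to control the mismatch in each coordinate $i$ separately. Since every partial derivative of $\hat F$ is bounded by $1$, we have the pointwise bound
$$\left|\hat F(x_1,\ldots,x_M) - \hat F(y_1,\ldots,y_M)\right|\le \sum_{i=1}^M |x_i - y_i|,$$
so it suffices to prove the (pathwise) estimate
$$\sum_{i=1}^M \E \left|N_i - \sum_{j=1}^n \varphi_i(\zeta_j)\right| \ll \delta_0^{\alpha/8}.$$

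For each fixed $i$, the cutoff $\varphi_i$ equals $1$ on $[1-\delta_{i-1},1-\delta_i]$ and vanishes outside a thin thickening of this segment into the complex plane. Hence real roots lying in $[1-\delta_{i-1},1-\delta_i)$ cancel exactly between $N_i$ and $\sum_j\varphi_i(\zeta_j)$, and the mismatch is bounded by the sum of two contributions: first, the number of real roots of $P_n$ in the two transition intervals $[1-\delta_{i-1}-\delta_i^{1+\alpha},1-\delta_{i-1})$ and $(1-\delta_i,1-\delta_i+\delta_i^{1+\alpha}]$ on which $\phi_i$ interpolates between $0$ and $1$; second, the number of non-real roots lying in the thin complex strip with real part in $[1-\delta_{i-1}-\delta_i^{1+\alpha},1-\delta_i+\delta_i^{1+\alpha}]$ and $|\Im|\le\delta_i^{1+\alpha}$, where the Gaussian bump in the imaginary direction is supported.

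To bound the real-root contribution, I would invoke the standard Kac-Rice type density estimate for real roots of $P_n$ near $1-\delta_i$, which follows from conditions \eqref{cond_xi}-\eqref{cond_ci_rho}: the expected number of real roots in an interval of length $h$ near $1-\delta_i$ is $O(h/\delta_i)$ whenever $h\le \delta_i$, yielding $O(\delta_i^{\alpha})$ per transition interval. For the complex-root contribution, I would appeal to a density estimate of the type developed in earlier universality works (for example Tao-Vu, Do-Nguyen-Vu, or the companion paper of Nguyen-Vu), showing that the expected number of complex zeros in such a thin rectangle is $O(\delta_i^{c})$ for some $c>0$. The strategy is to tile the rectangle by squares of side $\delta_i^{1+\alpha}$ and apply a small-ball/anti-concentration estimate for $(P_n,P_n')$ on each square, exploiting conjugate symmetry to turn closeness of a complex root to the real axis into simultaneous smallness of $P_n$ and $P_n'$ at a nearby real point.

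Finally, summing over $i=1,\ldots,M$ with $M\ll \log n$ and $\delta_i=\delta_0/2^i$, the resulting sum $\sum_i \delta_i^{c}$ is geometric and absorbs the $\log n$ loss from $M$ to give $O(\delta_0^{c})$ for the relevant exponent; the exponent is relaxed from $\alpha$ down to $\alpha/8$ to absorb polylogarithmic losses in the density estimates and any mild dependence of the bump derivatives on $\delta_i^{-(1+\alpha)}$. The hardest step will be the complex-root estimate: unlike the real-root density, which is controlled directly by Kac-Rice, bounding the number of complex zeros in a narrow horizontal strip near $1-\delta_i$ requires a quantitative anti-concentration argument for the pair $(P_n(x),P_n'(x))$ at real $x$, for which the moment-based arguments used for real roots alone are insufficient.
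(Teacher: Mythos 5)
Your reduction via the Lipschitz bound on $\hat F$ and the decomposition of $\bigl|N_i-\sum_j\varphi_i(\zeta_j)\bigr|$ into (i) real roots in the two transition intervals and (ii) non-real roots in the thin strip $\{|\Im z|\le\delta_i^{1+\alpha}\}$ is exactly the paper's decomposition into $S_2\cup S_3$ and $S_1$, and the final summation over $i$ with the geometric series in $\delta_i$ is also the same. However, there are two concrete gaps in how you propose to bound each piece. First, for the complex strip: tiling by squares of side $\delta_i^{1+\alpha}$ and applying a small-ball/anti-concentration estimate for $(P_n,P_n')$ (equivalently, the conjugate-pair repulsion bound $\P(N_n(B(x,2\delta_i^{1+\alpha}))\ge 2)\ll\delta_i^{3\alpha/2}$ from \cite{DOV}) only controls the \emph{probability} that the strip contains a root. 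To convert this into a bound on the \emph{expectation} $\E N_n(S_1)$, which is what the Lipschitz reduction demands, you need a uniform moment or tail bound on the number of roots in a small disk — otherwise the event could carry up to $n$ roots and the trivial bound $\E N_n(S_1)\le n\,\P(N_n(S_1)>0)$ is useless. The paper supplies this with a separate lemma (proved via Jensen's inequality for the zero-counting function together with anti-concentration of $|P_n|$ on an arc), giving $\E N_n^2(B)\ll\log^2(1/\delta_i)$, and then applies Cauchy--Schwarz; this step is where the exponent degrades from $\alpha/2$ to $\alpha/8$. Your sketch asserts an expectation bound but the mechanism you describe only yields a probability bound, so this intermediate input is missing.

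Second, for the transition intervals: the Kac--Rice formula in the form you invoke computes $\E N_n(a,b)$ only when the coefficients are Gaussian; under Conditions \eqref{cond_xi}--\eqref{cond_ci_rho} alone there is no ``standard Kac--Rice density'' for $P_n$. The correct route (and the one the paper takes) is to first apply the local universality of the expected number of real roots (\cite[Theorem 2.4]{DOV}) to replace $\E N_n(S_2\cup S_3)$ by $\E\tilde N_n(S_2\cup S_3)+O(\delta_i^{\alpha/2})$, and only then evaluate the Gaussian expectation by Kac--Rice. Both gaps are repairable with results you allude to, but as written the argument does not close: the passage from probability to expectation for the strip, and the reduction to the Gaussian case before Kac--Rice, each require a named lemma that your sketch does not supply.
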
 

\begin{proof}[Proof of Lemma \ref{lemma:aproximation1}] By the derivative assumption on $\hat F$, we have 
	$$
	\hat{F}(N_{1}, \ldots, N_{M})- \hat F\left(\sum_{j=1}^{n}\varphi_{1}(\zeta_{j}), \ldots,\sum_{j=1}^{n}\varphi_{M}(\zeta_{j})\right) \ll  \sum_{i=1}^{M}\left|N_{i}-\sum_{j=1}^{n}\varphi_{i}(\zeta_{j})\right|.
	$$
	
	For each $i\leq M, |N_{i}-  \sum_{j=1}^{n}\varphi_{i}(\zeta_{j})|$ is bounded by the number of roots of $P_{n}$ in the union of the sets $S_{1}, S_2$, $S_3$ where $S_1$ is the set of all complex numbers whose real part lies in $[1-\delta_{i-1}-\delta_{i}^{1+\alpha}, 1-\delta_{i}+\delta_{i}^{1+\alpha}]$ and imaginary part in $[-\delta_{i}^{1+\alpha}, \delta_{i}^{1+\alpha}]\setminus \{0\}$, $S_{2} :=[1-\delta_{i-1}-\delta_{i}^{1+\alpha}, 1-\delta_{i-1}]$, and $S_{3}:=[1-\delta_{i}, 1-\delta_{i}+\delta_{i}^{1+\alpha}].$
	
	Thus, Lemma \ref{lemma:aproximation1} follows by proving that for $k=1, 2, 3,$
	$$\E N_n(S_k)\ll \delta_0^{\alpha/8} .$$
	
	To this end, we use the following lemma from \cite{DOV}.
	\begin{lemma}\label{lm:repulsion} \cite[Lemma 5.1]{DOV}
		There exists a constant $\alpha_0>0$ such that for all $0<\alpha\le \alpha_0$ and all  $x\in \R$ with $|x|\in  [1-\delta_{i-1}-\delta_{i}^{1+\alpha}, 1-\delta_{i}+\delta_{i}^{1+\alpha}]$, 
		$$\P  \left(N_{n}(B(x,2\delta_i^{1+\alpha}))\ge 2\right) \ll  \delta_i^{3\alpha/2}.$$
	\end{lemma}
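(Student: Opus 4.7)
The plan is the classical root-repulsion strategy: having two roots of $P_n$ in $B(x,r)$ with $r := 2\delta_i^{1+\alpha}$ forces $|P_n(x)|$ to be anomalously small compared to its typical scale $\sqrt{\Var P_n(x)}$, and anti-concentration of the independent sum $P_n(x)=\sum_{i=0}^{n} c_i\xi_i x^i$ then yields the desired probability bound. Throughout I use that the hypothesis $|x|\in[1-\delta_{i-1}-\delta_i^{1+\alpha},\,1-\delta_i+\delta_i^{1+\alpha}]$ places $x$ at distance $1-|x|\asymp\delta_i$ from $\sign(x)\in\{-1,1\}$.

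For the deterministic reduction, if $P_n$ has two roots (counted with multiplicity, allowing a complex conjugate pair since $P_n$ has real coefficients) $z_1,z_2\in B(x,r)$, I factor $P_n(z)=(z-z_1)(z-z_2)Q(z)$ and fix an auxiliary scale $r_0:=\delta_i/4$, so that the contour $\partial B(x,r_0)$ stays at distance $\asymp\delta_i$ from $\pm 1$ and in particular in a region where the variance of $P_n$ has the same order as at $x$. Cauchy's formula applied to $Q=P_n/[(z-z_1)(z-z_2)]$ on this contour gives $|Q(x)|\le r_0^{-2}M(x)$ with $M(x):=\sup_{|z-x|=r_0}|P_n(z)|$, and hence
\[
|P_n(x)|\le r^2|Q(x)|\le (r/r_0)^2 M(x)=O_\alpha\bigl(\delta_i^{2\alpha}\bigr)\,M(x).
\]

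For the probabilistic step, Condition \eqref{cond_ci_rho} gives by direct computation $\sigma_0:=\sqrt{\Var P_n(x)}\asymp\delta_i^{-(\rho+1/2)}$, with the same asymptotic holding uniformly for $z$ on $\partial B(x,r_0)$. A Markov bound applied to the contour integral of $|P_n|^{2+\varepsilon/2}$, controlled via Rosenthal's inequality using \eqref{cond_xi}, gives $\P(M(x)>L\sigma_0)\ll L^{-(2+\varepsilon/2)}$, so with $L=\delta_i^{-\alpha/4}$ the bad event has probability far smaller than $\delta_i^{3\alpha/2}$. Off the bad event, two roots in $B(x,r)$ force $|P_n(x)|\le \delta_i^{7\alpha/4}\sigma_0$. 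A one-dimensional Berry--Esseen bound for the independent sum $P_n(x)=\sum c_i\xi_i x^i$ then gives $\P(|P_n(x)|\le t\sigma_0)\ll t+n^{-c}$; applied with $t=\delta_i^{7\alpha/4}$ and using $\delta_i\ge 1/n$ to absorb the $n^{-c}$ term, this yields $O_\alpha(\delta_i^{3\alpha/2})$ for $\alpha$ sufficiently small.

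The main obstacle I expect is the Berry--Esseen step for general $\xi_i$ with only $(2+\varepsilon)$ moments: its success rests on the variance $\sigma_0^2\asymp\sum c_i^2 x^{2i}$ being truly carried by $\Theta(\delta_i^{-1})$ summands clustered near $i\sim\delta_i^{-1}$, so that no single summand dominates and the Lyapunov condition holds uniformly in $x$ over the whole stated range. A secondary bookkeeping issue is the uniformity in $z$ of $\E|P_n(z)|^2$ as $z$ traverses the contour, which is handled by $r_0\ll\delta_i$ (so $r_0$ is a small fraction of $1-|x|$) together with a standard second-moment computation from \eqref{cond_ci_rho}.
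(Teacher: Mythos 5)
The paper does not actually prove this lemma: it is imported verbatim from \cite[Lemma 5.1]{DOV}, so there is no internal argument to compare against. Your strategy (factor out the two roots, bound $|P_n(x)|\ll (r/r_0)^2 M(x)$ with $M(x)=\sup_{|z-x|=r_0}|P_n(z)|$ via the maximum principle, then apply anti-concentration to the independent sum $P_n(x)$) is a legitimate alternative route --- it is essentially how the paper proves Lemma \ref{lm:bddness} --- and both your deterministic reduction and your Berry--Esseen step are sound. (Minor point: the Berry--Esseen error is the Lyapunov ratio, which here is $O(\delta_i^{c_0})$ for some $c_0=c_0(\rho,\varepsilon)>0$, not $O(n^{-c})$; this is harmless once $\alpha_0$ is chosen small in terms of $\varepsilon$ and $\rho$.)

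The genuine gap is the tail estimate for $M(x)$, and it is not cosmetic. Under Condition \eqref{cond_xi} the best available bound is $\P(M(x)>L\sigma_0)\ll L^{-(2+\varepsilon)}$ (Rosenthal plus the sub-mean-value property of $|P_n|^2$ give $\E M(x)^{2+\varepsilon}\ll\sigma_0^{2+\varepsilon}$; there is no hypercontractivity for general $\xi_i$). With your choice $L=\delta_i^{-\alpha/4}$ your own stated tail $L^{-(2+\varepsilon/2)}$ evaluates to $\delta_i^{\alpha/2+\alpha\varepsilon/8}$, which is \emph{larger} than $\delta_i^{3\alpha/2}$ unless $\varepsilon>8$; the assertion that this is ``far smaller than $\delta_i^{3\alpha/2}$'' is false. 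No choice of $L$ repairs this: you need simultaneously $L^{-(2+\varepsilon)}\le\delta_i^{3\alpha/2}$ and $\delta_i^{2\alpha}L\le\delta_i^{3\alpha/2}$, i.e.\ $\delta_i^{-3\alpha/(2(2+\varepsilon))}\le L\le\delta_i^{-\alpha/2}$, and this window is nonempty only when $\varepsilon\ge 1$. Optimizing $L$ gives the exponent $\tfrac{2(2+\varepsilon)}{3+\varepsilon}\alpha$, which is strictly below $\tfrac{3\alpha}{2}$ for every $\varepsilon<1$, whereas Condition \eqref{cond_xi} allows arbitrarily small $\varepsilon>0$. (That weaker exponent does exceed $\alpha$, so it would still survive the union bound over $\Theta(\delta_i^{-\alpha})$ balls in the proof of Lemma \ref{lemma:aproximation1}, but it does not prove the lemma as stated.) Recovering the exponent $3\alpha/2$ for all $\varepsilon>0$ requires a genuinely bivariate input --- joint smallness of $P_n(x)$ and $P_n'(x)$, i.e.\ a two-point correlation estimate --- rather than a single small-ball bound for $P_n(x)$ alone, which is the route taken in \cite{DOV}.
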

	
	To show that $\E N_n(S_1)\ll \delta_0^{\alpha/8}$, we note that $S_1$ is contained in a union of $\Theta(\delta_i^{-\alpha})$ small balls of radius $2\delta_i^{1+\alpha}$. By Lemma \ref{lm:repulsion}, the union bound and the fact that the complex roots come in conjugate pairs, the probability that $N_n(S_1)$ is nonzero is in fact negligible
	\begin{eqnarray} 
	\P\left (N_n(S_1)>0\right ) &\le& \sum_{\text{small balls}} \P\left (\text{number of roots in a small ball is at least 2}\right )\nonumber\\
	&\ll& \delta_i^{-\alpha}  \delta_i^{3\alpha/2} = \delta_{i}^{\alpha/2}.\nonumber
	\end{eqnarray}
	
	Thus, $N_{n}(S_{1})=0$ except on an event, named $\mathcal A_1$, of probability at most $O\left (\delta_{i}^{\alpha/2}\right ).$

The expectation on the tail event $\mathcal A_1$ is controlled as the higher moments of $N_n(S_1)$ are bounded by Lemma \ref{lm:bddness}. More specifically, since $S_1\subset B\left (1 - \frac{3\delta_i}{2}, \frac{\delta_i}{2}+ \delta_i^{1+\alpha}\right )$, applying \eqref{eq:secondmoment} to this ball and H\"older's inequality, we obtain 
	\begin{equation}\label{eq:s:1}
	\E N_{n}(S_{1})= \E N_{n}(S_{1})\textbf{1}_{\mathcal A_1}\le \left (\E N_n^{2} (S_1)\right )^{1/2} \left (\P(\mathcal A_1)\right )^{1/2} \ll \delta_i^{\alpha/4}\log \frac{1}{\delta_i} \ll \delta_{i}^{\alpha/8}.
	\end{equation}
	
	For $S_2\cup S_3$, \cite[Theorem 2.4]{DOV} implies that for such intervals as $S_2$ and $S_3$, the expected number of real roots is universal in the sense that 
	$$\E N_{n}(S_{2} )=\E \tilde N_{n}(S_{2} )+O\left (\delta_{i}^{\alpha/2}\right )\quad \text{and}\quad \E N_{n}(S_{3} )=\E \tilde N_{n}(S_{3} )+O\left (\delta_{i}^{\alpha/2}\right )$$ 
	where $\tilde N_n(S)$ is the number of real roots of $\tilde P_n$ in a set $S$. It thus remains to show that $\E \tilde N_{n}(S_{2}\cup S_{3})\ll \delta_0^{\alpha/8}$. To this end, we  use  Kac-Rice formula (see \cite{Kac1943average, EK}; here we use \cite[Formula 3.12]{Far}),
	\begin{eqnarray} \label{eq:KacRice} 
	\E \tilde N_n(a, b) &=&  \frac{1	}{\pi}\int_{a} ^{b}\frac{\sqrt{\Var P_n(t)\Var P'_n(t) - (\Cov (P_n(t), P'_n(t))^{2}}}{(\Var P_n(t))^{2}}dt \nonumber\\
	&=&   \frac{1	}{\pi}\int_{a} ^{b} \frac{\sqrt{\sum_{i=0}^{n}\sum_{j=i+1}^{n} c_i^{2} c_j^{2}(j-i)^{2} t^{2i+2j-2}}}{\sum_{i=0}^{n} c_i^{2}t^{2i}}dt.
	\end{eqnarray}    
	Algebraic manipulations show that 
	\begin{equation}\label{eq:S23}
	\E \tilde N_{n}(S_{2}\cup S_{3})\ll \delta_{i}^{\alpha/2}.
	\end{equation}
	We add the verification of this estimate in Appendix \ref{app:proof:S23} for completeness.
	Putting the bounds together gives
	$\E  N_{n}(S_{2}\cup S_{3})\ll \delta_{i}^{\alpha/2}$.
	
	By combining this with \eqref{eq:s:1}, it follows that  the left-hand side of \eqref{eq:smooth1} is bounded by $O\left(  \sum_{i=1}^{M}\delta_{i}^{\alpha/8}\right)=O\left (\delta_0^{\alpha/8}\right )$, proving \eqref{eq:smooth1} and Lemma \ref{lemma:aproximation1}.
	\end{proof}

\subsection{Reducing to an explicit function of the random polynomial: log$|P_n|$}\qquad\newline 
 In light of Lemma \ref{lemma:aproximation1}, to show Proposition \ref{prop:univ:1M}, it remains to show that
\begin{equation}\label{smooth2}
\E \hat F \left(\sum_{j=1}^{n}\varphi_{1}(\zeta_{j}), \ldots,\sum_{j=1}^{n}\varphi_{M}(\zeta_{j})\right)=\E \hat F\left(\sum_{j=1}^{n}\varphi_{1}\left (\tilde{\zeta}_{j}\right ), \ldots,\sum_{j=1}^{n}\varphi_{M}\left (\tilde{\zeta}_{j}\right ) \right )+O(\delta_0^{\alpha})
\end{equation}
where $\tilde{\zeta}_{j}$ are the roots of $\tilde{P}_{n}.$

In this section, we reduce the sums $ \sum_{j=1}^{n}\varphi_{i}(\zeta_{j})$ to an explicit function of $P_n$. The starting point for this reduction is to apply the Green's second identity to the compactly supported function $\varphi_i$
\begin{equation}
\sum_{j=1}^{n}\varphi_i (\zeta_{j})= \frac{1}{2\pi}\int_{\mathbb C}\log |P_n(z)| \triangle \varphi (z)dz  =  \frac{1}{2\pi}\int_{B(1- 3\delta_{i}/2, 2\delta_{i}/3)}\log |P_n(z)|\triangle \varphi (z)dz\label{eq:green:0}
\end{equation}
where we note that $\varphi_i$ is supported on $B(1- 3\delta_{i}/2, 2\delta_{i}/3)$.

We show that the integral on the right-most side is well approximated by its Riemann sum. In particular, we prove that for $m_{i}:=\delta_{i}^{-11\alpha},$
\begin{equation}\label{rc1}
\left|  \sum_{j=1}^{n}\varphi_{i}(\zeta_{j})-\frac{2\delta_{i}^{2}}{9m_{i}}\sum_{k=1}^{m_{i}}\log|P_n(w_{ik})|\triangle\varphi_{i}(w_{ik})\right|=O(\delta_i^{\alpha})
\end{equation}
with probability at least $1-O\left (\delta_i^{\alpha}\right )$, where $w_{ik}$ are chosen independently, uniformly at random from the ball $B(1- 3\delta_{i}/2, 2\delta_{i}/3)$ and are independent of all previous random variables. 

\begin{proof}[Proof of equation \eqref{rc1}] This proof is based on \cite[Equation (4.20)]{DOV}.

For notational convenience, we skip the subscript $i$ and write $\delta :=\delta_i$, $\varphi:=\varphi_i$, and $m:=m_i$. Let $x_0 = 1-3\delta_i/2$, the center of the ball.

Since $\varphi$ is compactly supported in $B(x_0, 2\delta/3)$, by the Green's second identity, it holds that
\begin{equation}
\sum_{j=1}^{n}\varphi (\zeta_{j})= \frac{1}{2\pi}\int_{\mathbb C}\log |P_n(z)| \triangle \varphi (z)dz  =  \frac{1}{2\pi}\int_{B(x_0, 2\delta/3)}\log |P_n(z)|\triangle \varphi (z)dz.\label{eq:green}
\end{equation}
 
 We shall think about the integral on the right-most side of \eqref{eq:green} as an expectation with respect to $dz$, up to some rescale; so in approximating an expectation by a sample mean (which is the second summation in \eqref{rc1}), it is sufficient to control the variance. To this end, we would need to require that $\log|P_n(z)|$ is bounded above and below. For that purpose, we introduce the good event  $\mathcal T$ on which the following hold for $c_1 = \alpha/2$:
\begin{enumerate} [label=(T\arabic{*}), ref=T\arabic{*}]
	\item \label{Tupper}$\log|P_n(z)|\le \frac{1}{2}\delta^{-c_1}$ for all $z \in B\left ( x_0, \frac{4\delta}{5}\right ) $.
	\item \label{Tlower}$\log|P_n(x_1)|\ge -\frac{1}{2} \delta^{-c_1}$ for some $x_1\in B\left ( x_0, \frac{\delta}{100}\right ) $.
\end{enumerate}
By Jensen's inequality \eqref{eq:Jensen}, these conditions imply 
\begin{equation}\label{Troot}
N_n \left (B\left ( x_0, \frac{3\delta}{4}\right ) \right )\ll  \delta^{-c_1}.
\end{equation}
We will show later that 
\begin{equation}\label{eq:prob:T}
\P\left (\mathcal T\right ) = 1-O\left (\delta^{\alpha}\right ).
\end{equation}
Assuming \eqref{eq:prob:T}, it suffices to show that \eqref{rc1}, conditioned on $\mathcal T$, holds with probability $1-O\left (\delta^{\alpha}\right )$. The following lemma provides the required variance bound, conditioned on $\mathcal T$. 

\begin{lemma}\label{f5} On the event $\mathcal T$, we have
	\begin{equation}\label{norm2}
	\int_{B( x_0, 2\delta/3)}\left (\log |P_n(z)|\right )^{2}dz\ll \delta^{-8c_1+2}.
	\end{equation}
\end{lemma}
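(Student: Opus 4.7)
The plan is to isolate the zeros of $P_n$ inside $B(x_0,3\delta/4)$ and decompose $\log|P_n|$ into a singular ``root'' contribution and a harmonic remainder. Let $\zeta_1,\ldots,\zeta_M$ be the zeros of $P_n$ in $B(x_0,3\delta/4)$; by \eqref{Troot}, on $\mathcal{T}$ we have $M \ll \delta^{-c_1}$. Factoring
\[
P_n(z) = Q(z) \prod_{j=1}^{M}(z-\zeta_j),
\]
the polynomial $Q$ is entire and non-vanishing on $B(x_0,3\delta/4)$, so $\log|Q|$ is harmonic there. Combining $\log|P_n| = \log|Q| + \sum_{j=1}^{M}\log|z-\zeta_j|$ with $(a+b)^2 \le 2a^2 + 2b^2$ reduces the lemma to bounding the two integrals
\[
I_\zeta := \int_{B(x_0,2\delta/3)} \Bigl(\sum_{j=1}^{M}\log|z-\zeta_j|\Bigr)^{2} dz
\quad\text{and}\quad
I_Q := \int_{B(x_0,2\delta/3)} (\log|Q(z)|)^{2} dz.
\]

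The root integral $I_\zeta$ is routine: Cauchy--Schwarz gives $I_\zeta \le M\sum_j \int_{B(x_0,2\delta/3)}(\log|z-\zeta_j|)^{2} dz$, and polar coordinates around $\zeta_j$ yield each inner integral $\ll \delta^{2}(\log\delta)^{2}$. Hence $I_\zeta \ll M^{2}\delta^{2}(\log\delta)^{2} \ll \delta^{2-2c_1}(\log\delta)^{2} \ll \delta^{2-8c_1}$, absorbing the polylogarithmic factor into a small power of $\delta$ (using $c_1>0$). The integral $I_Q$ is then reduced to the pointwise estimate $|\log|Q(z)|| \ll \delta^{-c_1}\log(1/\delta)$ on $B(x_0,2\delta/3)$, which immediately gives $I_Q \ll \delta^{2-8c_1}$ in the same way.

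To obtain the pointwise estimate on $\log|Q|$, the \emph{upper} bound comes from the maximum modulus principle: on $\partial B(x_0,4\delta/5)$ every $\zeta_j \in B(x_0,3\delta/4)$ satisfies $|z-\zeta_j| \ge \delta/20$, so \eqref{Tupper} yields
\[
|Q(z)| = \frac{|P_n(z)|}{\prod_{j}|z-\zeta_j|} \le e^{\delta^{-c_1}/2}(20/\delta)^{M} \quad\text{on}\quad \partial B(x_0,4\delta/5),
\]
and the same bound persists throughout $B(x_0,4\delta/5)$ because $Q$ is holomorphic there. The \emph{lower} bound is more delicate. At the single point $x_1$, since each $|x_1-\zeta_j| \le 3\delta/4 + \delta/100 \le 1$, the decomposition gives $\log|Q(x_1)| \ge \log|P_n(x_1)| \ge -\tfrac12 \delta^{-c_1}$ by \eqref{Tlower}. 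To spread this one-point estimate to all of $B(x_0,2\delta/3)$, I would apply Harnack's inequality to the non-negative harmonic function $A-\log|Q|$ (where $A$ is the upper bound just obtained) on the ball $B(x_1,7\delta/10) \subset B(x_0,3\delta/4)$. Since $B(x_0,2\delta/3) \subset B(x_1,2\delta/3+\delta/100)$ and the radii $7\delta/10$ and $2\delta/3+\delta/100$ differ by a constant multiple of $\delta$, the Harnack constant is $O(1)$, and the desired uniform bound $|\log|Q(z)|| \ll \delta^{-c_1}\log(1/\delta)$ follows.

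The main obstacle is precisely this lower bound. The event $\mathcal{T}$ only guarantees that $|P_n|$ is non-negligible at the single point $x_1$, whereas $|P_n|$ itself can be arbitrarily small throughout $B(x_0,2\delta/3)$ because of its many zeros. Peeling off those zeros via $P_n = Q\prod(z-\zeta_j)$ replaces the merely subharmonic $\log|P_n|$ by the honest harmonic $\log|Q|$, and this harmonicity is what lets Harnack transport the one-point estimate at $x_1$ into a uniform bound. The remaining work is purely quantitative bookkeeping: checking that the radii $\delta/100$, $2\delta/3$, $7\delta/10$, $3\delta/4$ are sufficiently separated to keep the Harnack constant $O(1)$.
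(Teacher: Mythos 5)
Your proof is correct, and it follows the same overall strategy as the paper: factor out the nearby zeros of $P_n$, bound the root contribution by a direct $L^2$ computation, and control the harmonic remainder by combining a maximum-modulus upper bound with the one-point lower bound at $x_1$ via Harnack's inequality. The one genuine difference is in how the zero-free region for the harmonic factor is arranged, and your version is cleaner. The paper only removes the zeros lying in $B(x_1,r-\eta)$, where $r$ is chosen by pigeonhole (using \eqref{Troot}) so that the annulus $B(x_1,r+\eta)\setminus B(x_1,r-\eta)$ is zero-free with $\eta\gg\delta^{1+c_1}$; the price is a Harnack constant of size $\delta^{-c_1}$ and a boundary separation from the zeros of only $2\eta$, which is why the paper ends up with $|\log|g||\ll\delta^{-3c_1}$ and the bound $\delta^{2-6c_1}$. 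You instead strip \emph{all} zeros in $B(x_0,3\delta/4)$ and evaluate the maximum-modulus bound on the strictly larger circle $\partial B(x_0,4\delta/5)$, where every removed zero is at distance at least $\delta/20$; this is legitimate because the maximum principle for the polynomial $Q$ is indifferent to any of its remaining zeros in the intermediate annulus, while $Q$ is genuinely zero-free on $B(x_0,3\delta/4)\supset B(x_1,7\delta/10)$, so your Harnack constant is $O(1)$. The radii check out ($7\delta/10+\delta/100<3\delta/4$ and $2\delta/3+\delta/100<7\delta/10$ with a gap $\Theta(\delta)$), and you obtain the slightly sharper pointwise bound $|\log|Q||\ll\delta^{-c_1}\log(1/\delta)$, hence $\delta^{2-2c_1}\log^2(1/\delta)\ll\delta^{2-8c_1}$ for the integral. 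Both arguments consume exactly the same inputs from $\mathcal T$, namely \eqref{Tupper}, \eqref{Tlower} and the root count \eqref{Troot}; yours simply dispenses with the annulus pigeonhole at no cost.
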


Assuming Lemma \ref{f5} and the fact that $\left\Vert \triangle \varphi \right\Vert_{\infty}\ll \delta ^{-2(1+\alpha)}$ by the definition of $\varphi$, we conclude that on the event $\mathcal T$,
\begin{equation} \label{eq:variance:MC}
\dashint_{B(x_0, 2\delta/3)}\log^{2} |P_n(z)|\cdot |\triangle \varphi(z)|^{2}dz \ll \delta^{-4-8\alpha} 
\end{equation}
where $\dashint_{B(x_0, 2\delta/3)} f(z)dz: = \frac{1}{|{B(x_0, 2\delta/3)}|}\int_{B(x_0, 2\delta/3)} f(z)dz$ is the average of $f$ on the domain of integration.

Having bounded the $2$-norm, we now use the following sampling lemma which is a direct application of Chebyshev's inequality.
\begin{lemma}[Monte Carlo sampling Lemma] (\cite[Lemma 38]{TVpoly})
	Let $(X, \mu)$ be a probability space, and $F: X\to \C$ be a square integrable function. Let $m\ge 1$, let $x_1, \dots, x_m$ be drawn independently at random from $X$ with distribution $\mu$, and let $S$ be the empirical average 
	$$S: =  \frac{1}{m}\left (F(x_1)+\dots+ F(x_m)\right ).$$
	Then $S$ has mean $\int_{X} Fd\mu$ and variance $\frac{1}{m}\int_{X} \left (F-\int_{X} Fd\mu\right )^{2}d\mu$. In particular, by Chebyshev's inequality, we have
	$$\P\left (\left |S-\int_{X} Fd\mu\right |\ge \lambda\right )\le \frac{1}{m\lambda^{2}}\int_{X} \left (F-\int_{X} Fd\mu\right )^{2}d\mu.$$
\end{lemma}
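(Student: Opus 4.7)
The plan is to directly verify the three asserted properties (mean, variance, tail bound) using only elementary tools: linearity of expectation, independence, and Chebyshev's inequality. Since the lemma is a standard Monte Carlo statement, there is no real obstacle; the main point is just to be careful with the complex-valued case.

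First, I would compute the mean. By linearity of expectation and the fact that each $x_i$ has distribution $\mu$,
\begin{equation*}
\E S = \frac{1}{m}\sum_{i=1}^{m}\E F(x_i) = \E F(x_1) = \int_{X} F\, d\mu.
\end{equation*}
Next, for the variance, write $S - \E S = \frac{1}{m}\sum_{i=1}^{m}(F(x_i) - \int_X F\, d\mu)$. Since $x_1,\dots,x_m$ are independent, the cross terms in $\E|S - \E S|^2$ vanish: for $i \neq j$, independence gives $\E[(F(x_i)-\E F(x_i))\overline{(F(x_j)-\E F(x_j))}] = 0$. Thus
\begin{equation*}
\E |S - \E S|^2 = \frac{1}{m^2}\sum_{i=1}^{m} \E\left|F(x_i) - \int_X F\, d\mu\right|^2 = \frac{1}{m}\int_{X}\left|F - \int_X F\, d\mu\right|^2 d\mu,
\end{equation*}
which is the claimed variance formula (interpreting variance of a complex random variable $Z$ as $\E|Z-\E Z|^2$).

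Finally, the tail bound follows immediately from Chebyshev's (or Markov's) inequality applied to $|S - \E S|^2$:
\begin{equation*}
\P(|S - \E S| \ge \lambda) = \P(|S - \E S|^2 \ge \lambda^2) \le \frac{\E|S - \E S|^2}{\lambda^2} = \frac{1}{m\lambda^2}\int_{X}\left|F - \int_X F\, d\mu\right|^2 d\mu,
\end{equation*}
completing the proof. The only subtlety worth flagging is that $F$ may be complex-valued, so one should consistently use $|\cdot|^2$ rather than squaring directly; no other issue arises, and there is no genuinely hard step.
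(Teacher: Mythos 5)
Your proof is correct and is exactly the standard argument the paper has in mind: the lemma is only cited (from \cite{TVpoly}) and described as ``a direct application of Chebyshev's inequality,'' which is precisely your computation of the mean by linearity, the variance via vanishing cross terms from independence, and the Chebyshev tail bound. Your remark about using $|\cdot|^2$ for the complex-valued case is a reasonable clarification of the paper's slightly loose notation.
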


Conditioning on $\mathcal T$ and applying this sampling lemma with $\lambda = \delta^{\alpha-2}$ together with \eqref{eq:variance:MC}, we obtain 
\begin{equation} 
\dashint_{B(x_0, 2\delta/3)}\log |P_n(z)|\triangle \varphi(z)dz - \frac{1}{m}\sum_{k=1}^{m}\log|P_n(w_{ik})|\triangle\varphi(w_{ik}) \ll \delta^{\alpha-2}\nonumber
\end{equation}
with probability at least $1-\frac{\delta^{-4-8\alpha}}{m\delta^{2\alpha-4}} = 1-\delta^{\alpha}$ where we recall that $m = \delta^{-11\alpha}$.

Combining this with \eqref{eq:green} gives \eqref{rc1}, conditioned on $\mathcal T$ as claimed. 
\end{proof}

It is left to verify \eqref{eq:prob:T} and Lemma \ref{f5}.
\begin{proof}[Proof of \eqref{eq:prob:T}] 
	Since
	$$\sup_{z\in B(x_0, 4\delta/5)} |P_n(z)|\le \sum_{i=0}^{n} |c_i||\xi_i|\left (1-  7\delta/10\right )^{i}$$
	has mean at most $\delta^{-O(1)}$, applying Markov's inequality to the random variable $$\sum_{i=0}^{n} |c_i||\xi_i|\left (1-  7\delta/10\right )^{i},$$ we conclude that the event \eqref{Tupper} happens with probability at least $1 - O_A\left (\delta^A\right )$ for any constant $A>0$.
	
	For \eqref{Tlower}, writing $x_0 = re^{i\theta}$ and observing that the set $\{w = re^{i\theta'}: \theta'\in [\theta-\delta/100, \theta+\delta/100]\}$ is a subset of $B(x_0, \delta/100)$, we have
	\begin{eqnarray}
	&&\P\left (\text{\eqref{Tlower} fails}\right )  \le \P\left (\sup_{\theta'\in [\theta-\delta/100, \theta+\delta/100]}\left |\sum_{j=0}^{n} c_j \xi_j r^{j}e^{ij\theta'}\right | \le \exp\left (-\delta^{-c_1}/2\right )\right ).
	\end{eqnarray}
	
	By taking the supremum outside, the right-hand side is at most
	$$ \sup_{\theta'\in [\theta-\delta/100, \theta+\delta/100]}\P\left (\left |\sum_{j=0}^{n} c_j \xi_j r^{j}e^{ij\theta'}\right | \le \exp\left (-\delta^{-c_1}/2\right )\right )$$
	and hence is bounded by
	$$  \sup_{\theta'\in [\theta-\delta/100, \theta+\delta/100]}\sup_{Z\in \R}\P\left (\left |\sum_{j=1}^{1/\delta} c_j \xi_j r^{j}\cos(j\theta')-Z\right | \le \exp\left (-\delta^{-c_1}/2\right )\right )$$
	by projecting onto the real line and conditioning on the random variables $(\xi_i)_{i\notin [1, 1/\delta]}$.

	Applying the anti-concentration Lemma \ref{lmanti_concentration} with $B = 4\alpha, \mathcal E = [1, 1/\delta], M = 1/\delta$, $I = [\theta-\delta/100, \theta+\delta/100]$, $e_j = c_jr^{j}$ and $\bar e = \delta$ (where we use Condition \eqref{cond_ci_rho} and the assumption that $r = |z|\ge 1-3\delta$ to get  $|e_j|\ge \bar e$), we obtain $\theta'\in [\theta-\delta/100, \theta+\delta/100]$ such that for all $Z\in \C$,
	\begin{equation}
	\P\left (\left |\sum_{j=1}^{1/\delta} c_j \xi_j r^{j}\cos(j\theta')-Z\right | \le \exp\left (-\delta^{-c_1}/2\right )\right )  \ll \delta^{2\alpha}.\nonumber
	\end{equation}
	This proves that \eqref{Tlower} holds with probability at least $1- O\left (\delta^{2\alpha}\right )$, concluding the proof of \eqref{eq:prob:T}. 
\end{proof}

\begin{proof} [Proof of Lemma \ref{f5}] This proof is based on \cite[Lemma 4.8]{DOV}.
	Since $x_1\in B(x_0, \delta/100)$, it suffices to show that 
	\begin{equation}
	\int_{B( x_1, 2\delta/3+\delta/100)}\left (\log |P_n(z)|\right )^{2}dz\ll \delta^{-8c_1+2}.\nonumber
	\end{equation}
	By \eqref{Troot}, there exists an $r\in [2\delta/3+\delta/100, 3\delta/4-\delta/100]$ such that $P_n$ does not have zeros in the (closed) annulus $B(x_1, r+\eta)\setminus B(x_1, r-\eta)$ with center at $x_1$ and radii $r\pm \eta$ where $\eta \gg \delta^{1+c_1}$.  
	
	It is now sufficient to show that 
	\begin{equation}\label{norm2:1}
	\int_{B( x_1, r )}{\log ^2| P_n(z)|}dz  \ll \delta^{-8c_1+2}.
	\end{equation}
	Let $\zeta_1,\dots, \zeta_k$ be all zeros of $P_n$ in $B(x_1, r-\eta)$, then $k \ll \delta^{-c_1}$ and $P_n(z) = (z-\zeta_1)\dots(z-\zeta_k)g(z)$ where $g$ is a polynomial having no zeros on the closed ball $B(x_1, r+\eta)$.
	By triangle inequality,
	\begin{eqnarray}
	\left (\int_{B( x_1, r )}\log ^2| P_n(z)| dz \right )^{1/2}&\le& \sum_{i=1}^k 	\left (\int_{B( x_1, r )}\log ^2|z-\zeta_i|dz \right )^{1/2} + \left (\int_{B( x_1, r )}\log ^2|g(z)| dz \right )^{1/2}\nonumber\\
	&\ll& \delta^{1 - 2c_1}+ \left (\int_{B( x_1, r )}\log ^2|g(z)| dz \right )^{1/2},\label{a1}
	\end{eqnarray}
	where in the last inequality, we used
	\begin{eqnarray}
	\int_{B( x_1, r)}\log^2 |z-\zeta_i|\text{d}z \le \int_{B(0,3\delta/2)} \log ^2 |z|\text{d}z \ll \delta^{2-2c_1}. \nonumber
	\end{eqnarray}

	Next, we will bound $\int_{B( x_1, r)}{\log ^2| g(z)|}dz$ by finding a uniform upper bound and lower bound for $\log |g(z)|$. Since $\log |g(z)|$ is harmonic in $B(x_1, r)$, it attains its extrema on the boundary. Thus, 
	\begin{equation}\label{a2}
	\left (\int_{B( x_1, r )}\log ^2| g(z)| dz \right )^{1/2}  \ll \delta\max_{z\in \partial B(x_1, r)}|\log|g(z)||.
	\end{equation} 
	Notice that $\log|g(z)|$ is also harmonic on the ball $B(x_1, r+\eta)$. For the upper bound of $\log |g(z)|$, we claim that for all $z$ in $B(x_1, r+\eta)$,  
		\begin{eqnarray}
		\log |g(z)| \le \delta^{-2c_1}\label{f3}.
		\end{eqnarray}
 
	Indeed, since a harmonic function attains its extrema on the boundary, we can assume that $z\in\partial B(x_1, r+\eta)$.
		By Condition \eqref{Tupper}, $\log|P_n(z)|\le \delta^{-c_1}$. Additionally, by noticing that $|z-\zeta_i|\ge 2\eta$ for all $1\le i\le k$, we get
		\begin{eqnarray}
		\log |g(z)| = \log|P_n(z)| - \sum_{i=1}^k \log |z-\zeta_i|\le \delta^{-c_1} - k\log(2\eta) \le \delta^{-2c_1}
		\end{eqnarray}
		as claimed.

	As for the lower bound, let $u(z) = \delta^{-2c_1} - \log |g(z)|$, then $u$ is a non-negative harmonic function on the ball $B(x_1, r+\eta)$. By Harnack's inequality (see \cite[Chapter 11]{Ru}) for the subset $B(x_1, r)$ of the above ball, we have that for every $z\in B(x_1, r)$,
	\[\alpha u(x_1)\le u(z)\le \frac{1}{\alpha} u(x_1),
	\]
	where $\alpha =\frac{\eta}{2r+\eta}\gg \delta^{c_1}$.
	Hence,
	\[\alpha \left(\delta^{-2c_1} - \log |g(x_1)|\right)\le \delta^{-2c_1} - \log |g(z)|\le \frac{1}{\alpha} \left(\delta^{-2c_1} - \log |g(x_1)|\right).
	\]
	
	And so,
	\begin{equation}\label{a3}
	|\log |g(z)||\le \frac{1}{\alpha}|\log |g(x_1)||+ \frac{1}{\alpha }\delta^{-2c_1} \ll \delta^{-c_1}|\log |g(x_1)|| + \delta^{-3c_1}.
	\end{equation}
	Thus, we reduce to bounding $|\log |g(x_1)||$.
	From Lemma \ref{f3} and condition \eqref{Tlower}, we have
	\begin{eqnarray}
	\delta^{-2c_1}\ge \log |g(x_1)| &=& \log |P(x_1)| - \sum_{i=1}^k\log |x_1 - \zeta_i|\ge \log |P(x_1)| \ge -\frac{1}{2}\delta^{-c_1}.\nonumber
	\end{eqnarray}
	And so, $|\log|g(x_1)||\le \delta^{-2c_1}$, which together with \eqref{a3} give
	\begin{equation}\label{a3-1}
	|\log |g(z)||\ll \delta^{-3c_1}.
	\end{equation}
	
	From \eqref{a1}, \eqref{a2}, and \eqref{a3-1}, we obtain \eqref{norm2:1} and hence Lemma \ref{f5}. 
\end{proof}

 \subsection{Universality of log $|P_n|$}

Since $\sum_{i=1}^{M} \delta_i^{\alpha}\ll \delta_0^{\alpha}$, by applying \eqref{rc1}, the left-hand side of \eqref{smooth2} equals
\begin{equation}
\E K(\log |P_n(w_{ik})|)_{\substack{i=1,\ldots,M\\k=1,\dots,m_{i}}}+O(\delta_0^{\alpha})\nonumber
\end{equation}
and the right-hand side of \eqref{smooth2} equals 
\begin{equation} 
\E K(\log |\tilde P_n(w_{ik})|)_{\substack{i=1,\ldots,M\\k=1,\dots,m_{i}}}+O(\delta_0^{\alpha})\nonumber
\end{equation}
where 
\begin{equation}\label{eq:K:def}
K(x_{ik})_{\substack{i=1,\ldots,M\\k=1,\dots ,m_{i}}} := \hat F\left(\frac{2\delta_{1}^{2}}{9m_{1}}\sum_{k=1}^{m_{1}} x_{1k}\triangle\varphi_{1}(w_{1k}), \ldots, \frac{2\delta_{M}^{2}}{9m_{M}}\sum_{k=1}^{m_{M}} x_{Mk}\triangle\varphi_{M}(w_{Mk})\right).
\end{equation}

In this section, we show that the difference between these two identities is small (Lemma \ref{lm:log-universality-general}). Before stating the result, note that by \eqref{eq:der:varphi} and the assumption on the derivatives of $\hat F$, it holds that
\begin{equation}\label{eq:derivativebound}
\begin{gathered}
\Vert K\Vert_{\infty}=O(1) ,\quad \left\Vert\frac{\partial K}{\partial x_{ik}}\right\Vert_{\infty}=O\left (\delta_{i}^{-2\alpha}\right ) ,\quad \left\Vert\frac{\partial^{2}K}{\partial x_{ik}\partial x_{i'k'}}\right\Vert_{\infty}=O\left (\delta_{i}^{-2\alpha}\delta_{i'}^{-2\alpha}\right ) ,\\
\text{and } \left\Vert\frac{\partial^{3}K}{\partial x_{ik}\partial x_{i'k'}\partial x_{i''k''}}\right\Vert_{\infty}=O\left (\delta_{i}^{-2\alpha}\delta_{i'}^{-2\alpha}\delta_{i''}^{-2\alpha}\right )
\end{gathered}
\end{equation}
for all $i, i', i'', k, k', k''$.

\begin{lemma} [Universality of log $|P_n|$] \label{lm:log-universality-general}
There exists a constant $\alpha_{0}>0$ such that for every constant $\alpha\in (0,  \alpha_{0}]$, every function $K : \mathbb{R}^{m_{1}+\cdots+m_{M}}\rightarrow \mathbb{R}$  that satisfies \eqref{eq:derivativebound}  and every $w_{ik}$  in $B(1-3\delta_{i}/2,2\delta_{i}/3)$,  we have
$$
|\E K(\log|P_n(w_{ik})|)_{ik}-\E K(\log|\tilde P_n(w_{ik})|)_{ik}|=O(\delta_0^{\alpha}).
$$
 \end{lemma}
 
In order to prove  Lemma \ref{lm:log-universality-general}, we first prove the following smooth version where the log function is replaced by a smooth function. The proof of Lemma \ref{lm:log-universality-general} follows by a routine smoothening argument that we defer to Appendix \ref{app:log_universality}. Both proofs are based on \cite{TVpoly}.

\begin{lemma} \label{lm:log-universality-smooth}
	There exists a constant $\alpha_{0}>0$  such that for every $\alpha\in (0,  \alpha_{0}]$, every smooth \footnote{By ``smooth", we mean that $L$ has continuous derivatives up to order 3.} function $L : \mathbb{C}^{m_{1}+\cdots+m_{M}}\rightarrow \mathbb{R}$  that satisfies \eqref{eq:derivativebound} and every $w_{ik}$  in $B(1-3\delta_{i}/2,2\delta_{i}/3)$, we have
\begin{equation}\label{eq:lm:log-universality-smooth}
\left|\E L\left (\frac{P_n(w_{ik})}{\sqrt{V(w_{ik})}}\right )_{\substack{i=1,\ldots,M\\k=1,\dots,m_{i}}}-\E L\left (\frac{\tilde{P}_n(w_{ik})}{\sqrt{V(w_{ik})}}\right )_{\substack{i=1,\ldots,M\\k=1,\dots,m_{i}}}\right|=O(\delta_0^{\alpha}) ,
\end{equation}
where $V(w):=  \sum_{j=N_{0}}^{n}|c_{j}|^{2}|w|^{2j}$ and $N_{0}$  is the constant in Conditions \eqref{cond_mean_0} and \eqref{cond_ci_rho}.
\end{lemma}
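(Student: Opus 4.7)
The plan is a Lindeberg-type replacement carried out at the level of the coefficients. For $0\le s\le n+1$ define the hybrid polynomial
$$P_n^{(s)}(w):=\sum_{j<s}c_j\tilde\xi_j w^j+\sum_{j\ge s}c_j\xi_j w^j,$$
so that $P_n^{(0)}=P_n$ and $P_n^{(n+1)}=\tilde P_n$, and let $Y^{(s)}$ denote the vector with entries $P_n^{(s)}(w_{ik})/\sqrt{V(w_{ik})}$. The left-hand side of \eqref{eq:lm:log-universality-smooth} telescopes as $\sum_{s=0}^{n}\bigl(\E L(Y^{(s)})-\E L(Y^{(s+1)})\bigr)$, so it suffices to bound each individual swap. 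The at most $N_0=O(1)$ swaps with $s<N_0$ are negligible because $|c_s|=O(1)$ and there are only finitely many of them, so they contribute at most a bounded number of crude Taylor corrections.

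For $s\ge N_0$, decompose $Y^{(s)}=Z^{(s)}+\xi_s v^{(s)}$ and $Y^{(s+1)}=Z^{(s)}+\tilde\xi_s v^{(s)}$, where $Z^{(s)}$ is independent of $(\xi_s,\tilde\xi_s)$ and $v^{(s)}$ has entries $c_s w_{ik}^s/\sqrt{V(w_{ik})}$. Expanding $L$ to third order in the direction $v^{(s)}$, the zeroth, first and second order terms in $\xi_s$ vs.\ $\tilde\xi_s$ agree in expectation because $\E\xi_s=\E\tilde\xi_s=0$ and $\E\xi_s^2=\E\tilde\xi_s^2=1$ for $s\ge N_0$. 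By \eqref{eq:derivativebound} the remainder is bounded by
$$C|c_s|^3\bigl(\E|\xi_s|^3+\E|\tilde\xi_s|^3\bigr)\Bigl(\sum_{i,k}\delta_i^{-2\alpha}\,|w_{ik}|^s/\sqrt{V(w_{ik})}\Bigr)^3.$$
Since only $(2+\varepsilon)$ moments of $\xi_s$ are assumed, I would truncate by replacing $\xi_s$ with $\xi_s\mathbf{1}_{|\xi_s|\le T_s}$ at a threshold $T_s=n^{o(1)}$, which yields $\E|\xi_s|^3\le T_s^{1-\varepsilon}\E|\xi_s|^{2+\varepsilon}\le\tau_2 T_s^{1-\varepsilon}$ on the truncated event. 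The tail event contributes at most $\|L\|_\infty\,\P(|\xi_s|>T_s)\le C T_s^{-(2+\varepsilon)}$, and the small deviations in first and second moments induced by truncation are absorbed by a standard recentering whose error is itself of third-order size.

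To finish, one has to sum the remainder over $s\ge N_0$ and show the total is $O(\delta_0^\alpha)$. The triple sum above factors nicely, and expanding the cube one may bound the cross terms $|w_{ik}|^s|w_{i'k'}|^s|w_{i''k''}|^s$ by an AM--GM-type inequality, reducing matters to controlling, for each evaluation point $w_{ik}$, the single sum $\sum_s|c_s|^3 T_s^{1-\varepsilon}|w_{ik}|^{3s}/\sqrt{V(w_{ik})}$. Here the identity $V(w_{ik})=\sum_{s\ge N_0}|c_s|^2|w_{ik}|^{2s}$, combined with the polynomial growth $|c_s|\asymp s^\rho$ and the exponential decay of $|w_{ik}|^{3s}$ for $s\gtrsim 1/\delta_i$ (since $|w_{ik}|\le 1-\delta_i/2$), localizes the sum to $s\sim\delta_i^{-1}$ and yields a power of $\delta_i$. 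After summing over $(i,k)$, using $m_i=\delta_i^{-11\alpha}$, $M\ll\log n$ and $T_s=n^{o(1)}$, the total bound is of order $\delta_0^{1/2-O(\alpha)}$, which is $O(\delta_0^\alpha)$ for sufficiently small $\alpha=\alpha_0$. The main obstacle is precisely this last accounting: every smoothing introduces a $\delta^{-O(\alpha)}$ loss (from the derivative bounds on $L$, from the $m_i$ evaluation points, and from the truncation threshold $T_s$), so the plan must extract a genuine positive power of $\delta_0$ from the $V$-normalization that overwhelms all of these losses simultaneously.
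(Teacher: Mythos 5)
Your overall architecture is the same as the paper's: a Lindeberg swap of the atom variables one index at a time, telescoping over $n+2$ hybrids, with the zeroth/first/second order Taylor terms cancelling for $s\ge N_0$ and the third-order remainder controlled by the smallness of $a_{ik,s}:=c_sw_{ik}^s/\sqrt{V(w_{ik})}$. The gap is in how you handle the fact that only $(2+\varepsilon)$ moments are available, and it is quantitative, not cosmetic. With a truncation threshold $T_s=n^{o(1)}$ neither side of your split closes. On the tail side, the events $\{|\xi_s|>T_s\}$ contribute (at best) $\sum_{s=0}^{n}\P(|\xi_s|>T_s)\gg n\cdot T_s^{-(2+\varepsilon)}=n^{1-o(1)}\to\infty$; even the sharper second-moment tail bound gives $T_s^{-\varepsilon}\sum_s A_s^2$ with $A_s=\sum_{ik}|a_{ik,s}|\delta_i^{-2\alpha}$, and $\sum_sA_s^2$ carries a factor $\delta_M^{-O(\alpha)}$ which can be a positive power of $n$. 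On the main-term side, the amplification $T_s^{1-\varepsilon}$ is fatal: the gain you extract from the $V$-normalization is only a power of $\delta_0=a_n$, and by \eqref{cond-a-log} $a_n$ need only be smaller than every power of $\log n$ --- so $\delta_0^{1/2}$ does not beat even $n^{1/\log\log n}$, let alone the $T_s^{1-\varepsilon}$ you would need to make the tail sum over $n$ swaps converge. Your claimed final tally $\delta_0^{1/2-O(\alpha)}$ silently drops this factor.

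The fix is exactly the paper's device: instead of a global truncation, interpolate the two remainder bounds $|\err_2|\ll|\xi_s|^2A_s^2$ and $|\err_2|\ll|\xi_s|^3A_s^3$ to get $|\err_2|\ll|\xi_s|^{2+\varepsilon}A_s^{2+\varepsilon}$, so that the per-swap cost is bounded directly by the assumed $(2+\varepsilon)$ moment times $A_s^{2+\varepsilon}$ with no spurious large factor. (Equivalently, truncate at the swap-dependent threshold $T_s\asymp A_s^{-1}$ and use the second-moment tail bound; a fixed $T_s=n^{o(1)}$ cannot work.) One then sums over $s$ using the exact identity $\sum_{s\ge N_0}|a_{ik,s}|^2=1$ together with the delocalization bound $|a_{ik,s}|\ll\delta_i^{\alpha_1}$ of \eqref{eq:delocalization:1}, so that the extra $\varepsilon$ power of $|a_{ik,s}|$ supplies the positive power of $\delta_i$ that absorbs all the $\delta_i^{-O(\alpha)}$ losses, leaving only a $\log^{O(1)}n$ prefactor, which \eqref{cond-a-log} kills. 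A secondary point: for the $O(1)$ swaps with $s<N_0$ it is not enough that there are finitely many of them with $|c_s|=O(1)$; since $\E\xi_s$ need not vanish there, the first-order term survives and each such swap must itself be shown to be $O(\delta_0^{\alpha})$, which requires invoking $|a_{ik,s}|\ll\delta_i^{\alpha_1}$ (i.e., the lower bound $V(w_{ik})\gg\delta_i^{-1-2\rho}$), not merely the boundedness of $c_s$.
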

\begin{proof}[Proof of Lemma \ref{lm:log-universality-smooth}.] We use the Lindeberg swapping argument. Let $P_{i_{0}}(z)=\sum_{i=0}^{i_{0}-1}c_{i}\tilde{\xi}_{i}z^{i}+\sum_{i=i_{0}}^{n}c_{i}\xi_{i}z^{i},$ for $0 \le i_0 \le n+1$. 
	Then  $P_{0}=P_n$ and $P_{n+1}=\tilde{P}_n$ and $P_{i_0 +1} $ is obtained from $P_{i_0} $ by replacing the random variable $\xi_{i_0} $ by $\tilde \xi_{i_0} $.  Let 
$$
I_{i_{0}}:=\left|\E L\left(\frac{P_{i_{0}}(w_{ik})}{\sqrt{V(w_{ik})}}\right)_{ik}-\E L\left(\frac{P_{i_{0}+1}(w_{ik})}{\sqrt{V(w_{ik})}}\right)_{ik}\right|.
$$

The left-hand side of \eqref{eq:lm:log-universality-smooth} is bounded by $\sum_{i_{0}=0}^{n}I_{i_{0}}$.
Fix $i_{0}\in[N_{0}, n+1]$ (where $N_0$ is the constant in Conditions \eqref{cond_mean_0} and \eqref{cond_ci_rho}) and let 
$$Y_{ik}:= \frac{P_{i_{0}}(w_{ik})}{\sqrt{V(w_{ik})}}-\frac{c_{i_{0}}\xi_{i_{0}}w_{ik}^{i_{0}}}{\sqrt{V(w_{ik})}}$$
 for $1\leq i\leq M, 1\leq k\leq m_{i}$. We have $$ \frac{P_{i_{0}+1}(w_{ik})}{\sqrt{V(w_{ik})}}=Y_{ik}+\frac{c_{i_{0}}\tilde{\xi}_{i_{0}}w_{ik}^{i_{0}}}{\sqrt{V(w_{ik})}}. $$  Conditioned on the $\xi_{j}$ and $\tilde{\xi}_{j}$ for all $j\neq i_{0}$, the $Y_{ik}$ are fixed. To bound $I_{i_0}$, we reduce to bounding 
\begin{equation}
d_{i_{0}}:=\left|\E _{\xi_{i_{0}},\tilde{\xi}_{i_{0}}}\hat{L}\left(\frac{c_{i_{0}}\xi_{i_{0}}w_{ik}^{i_{0}}}{\sqrt{V(w_{ik})}}\right)_{ik}-\E _{\xi_{i_{0}},\tilde{\xi}_{i_{0}}}\hat{L}\left(\frac{c_{i_{0}}\tilde{\xi}_{i_{0}}w_{ik}^{i_{0}}}{\sqrt{V(w_{ik})}}\right)_{ik}\right|.\label{def_di0}
\end{equation}
where  $\hat{L}=\hat{L}_{i_{0}}(x_{ik})_{ik} :=L(Y_{ik}+x_{ik})_{ik}$. Note that this function $\hat{L}$ also satisfies \eqref{eq:derivativebound} because $L$ does.

Let $a_{ik,i_{0}}= \frac{c_{i_{0}}w_{ik}^{i_{0}}}{\sqrt{V(w_{ik})}}$. By Condition \eqref{cond_ci_rho}, we have
\begin{equation}\label{eq:bound:V:below}
V(w_{ik}) \gg  \sum_{j=\delta_i^{-1}}^{2\delta_i^{-1}} j^{2\rho} \big(1-13\delta_i/6\big)^{2j}  \gg \delta_i^{-1-2\rho}
\end{equation}
and
\begin{equation}\label{eq:bound:cw}
\left |c_{i_0} w_{ik}^{i_{0}}\right |\ll i_0^{\rho} \big(1- \delta_i/6\big)^{i_0}\ll  i_0^{\rho} \exp\left (- i_0\delta_i/6\right ) \ll \max\{1,  \delta_i^{-\rho}\}.
\end{equation}
Since $\rho>-1/2$, we have from \eqref{eq:bound:V:below} and \eqref{eq:bound:cw} that 
\begin{equation}\label{eq:delocalization:1}
|a_{ik,i_{0}}|\ll \delta_{i}^{\alpha_{1}}
\end{equation}
 for some constant $\alpha_{1}>0$. Taylor expanding $\hat{L}$ around the origin, we obtain 
\begin{equation}\label{jul1}
\hat{L}(a_{ik,i_{0}}\xi_{i_{0}})_{ik}=\hat{L}(0)+\hat{L}_{1}+\err_{1},
\end{equation}
where
$$
\hat{L}_{1}=\left.\frac{\d\hat{L}(a_{ik,i_{0}}\xi_{i_{0}}t)_{ik}}{\d t}\right|_{t=0}=\sum_{ik}\frac{\partial\hat{L}(0)}{\partial\Re(z_{ik})}\Re(a_{ik,i_{0}}\xi_{i_{0}})+\sum_{ik}\frac{\partial\hat{L}(0)}{\partial \Im(z_{ik})}
\Im(a_{ik,i_{0}}\xi_{i_{0}}).
$$
Since $\hat{L}$ satisfies \eqref{eq:derivativebound}, we have 
\begin{align*}
|\err_{1}| &\quad\leq\quad  \sup_{t\in[0,1]} \left|\frac{1}{2}\frac{\d^{2}\hat{L}(a_{ik,i_{0}}\xi_{i_{0}}t)_{ik}}{\d t^{2}}\right|\\
&\quad\ll \quad  |\xi_{i_{0}}|^{2}\sum_{ik,i'k'}  |a_{ik,i_{0}}||a_{i'k',i_{0}}|\delta_{i}^{-2\alpha}\delta_{i'}^{-2\alpha} \ll |\xi_{i_{0}}|^{2}\left(\sum_{ik}|a_{ik,i_{0}}|\delta_{i}^{-2\alpha}\right)^{2}.
\end{align*}

Expanding to the next derivative, we have, in a similar manner, 
\begin{equation} \label{eq:Taylor}
\hat{L}(a_{ik,i_{0}}\xi_{i_{0}})_{ik}=\hat{L}(0)+\hat{L}_{1}+\frac{1}{2}\hat{L}_{2}+\err_{2},
\end{equation}
where $ \hat{L}_{2}=\left.\frac{\d ^{2}\hat{L}(a_{ik,i_{0}}\xi_{i_{0}}t)_{ik}}{\d t^{2}}\right|_{t=0}$ and 
\begin{equation}
| \err_{2}| \ll |\xi_{i_{0}}|^{3}\left(\sum_{ik}|a_{ik,i_{0}}|\delta_{i}^{-2\alpha}\right)^{3}.\nonumber 
\end{equation}

By definition,  $|  \err_{2}|=\left |\err_{1}-\frac{1}{2}\hat{L}_{2}\right |\ll |\xi_{i_{0}}|^{2}(\sum_{ik}|a_{ik,i_{0}}|\delta_{i}^{-2\alpha})^{2}$.  Using interpolation, H\"older's inequality and $m_i = \delta_i^{-11\alpha}$, we get
\begin{align*}
|\err_{2}| & \ll    |  \xi_{i_{0}}|^{2+\varepsilon}\left(\sum_{ik}|a_{ik,i_{0}}|\delta_{i}^{-2\alpha}\right)^{2+\varepsilon}
& \ll  |\xi_{i_{0}}|^{2+\varepsilon}M^{1+\varepsilon}\sum_{i=1}^{M}\delta_{i}^{-50\alpha}\left(\sum_{k=1}^{m_{i}}|a_{ik,i_{0}}|^{2}\right)^{(2+\varepsilon)/2}.
\end{align*}
 
All of these estimates also hold for $\tilde{\xi}_{i_0}$ in place of $\xi_{i_0}$. Since $\xi_{i_0}$ and $\tilde \xi_{i_0}$ have the same first and second moments and they both have bounded $(2+\varepsilon)$ moments, we get 
$$
d_{i_{0}}=|\E \err_{2}| \ll M^{1+\varepsilon}\sum_{i=1}^{M}\delta_{i}^{-50\alpha}\left(\sum_{k=1}^{m_{i}}|a_{ik,i_{0}}|^{2}\right)^{(2+\varepsilon)/2}.
$$

Taking expectation with respect to the remaining variables shows that the same upper bound holds for $I_{i_{0}}$ 
for all $N_{0}\leq i_{0}\leq n+1$. By \eqref{eq:delocalization:1}, choosing $\alpha$ to be sufficiently small compared to $\alpha_{1}$, we have $\left(  \sum_{k=1}^{m_{i}}|a_{ik,i_{0}}|^{2}\right)^{\varepsilon/2} \ll \delta_{i}^{(2\alpha_{1}-11\alpha)\varepsilon /2} \ll \delta_{i}^{100\alpha}$.
Hence,
$$
\sum_{i_{0}=N_{0}}^{n+1}I_{i_{0}}\ll M^{1+\ep}\sum_{i_{0}=N_{0}}^{n+1}\sum_{i=1}^{M}\delta_{i}^{50\alpha}\sum_{k=1}^{m_{i}}|a_{ik,i_{0}}|^{2} \ll \log^{2}n\sum_{i=1}^{M}\delta_{i}^{2\alpha}\ll (\log^{2}n)  \delta_0^{2\alpha}\ll \delta_0^{\alpha},
$$
where we used $M\ll \log n$, $  \sum_{i_0=N_{0}}^{n+1} |a_{ik,i_{0}}|^{2}=1$ and \eqref{cond-a-log}.

For $0\leq i_{0}<N_{0}$, instead of \eqref{jul1} and \eqref{eq:Taylor}, we use mean value theorem to get a rough bound
$$
\hat{L}(a_{ik,i_{0}}\xi_{i_{0}})_{ik}=\hat{L}(0)+O\left(|\xi_{i_{0}}|\sum_{ik}\delta_{i}^{-2\alpha}|a_{ik,i_{0}}|\right) ,
$$
which by the same arguments as above gives
$$
I_{i_{0}}\ll M^{1/2}\left(\sum_{i=1}^{M}\delta_{i}^{-50\alpha}\sum_{k=1}^{m_{i}}|a_{ik,i_{0}}|^{2}\right)^{1/2}\ll \log^{1/2}n\sqrt{\sum_{i=1}^{M}\delta_{i}^{2\alpha_{1}-61\alpha}}\ll \delta_0^{\alpha}.
$$
Taking all these bounds together, we get $\sum_{i_{0}=0}^{n+1}I_{i_{0}} \ll \delta_0^{\alpha}$. This completes the proof  of Lemma \ref{lm:log-universality-smooth}.
\end{proof}

\subsection{Finishing the proof of Proposition \ref{prop:univ:1M}}\label{subsection:proof:univ:1M}
In Lemma \ref{lemma:aproximation1}, we approximated the number of real roots in dyadic intervals, $N_i$, by the sums $\sum_{j=1}^{n}\varphi_{i}(\zeta_{j})$ and estimated the error term to be
	 		\begin{equation}\label{eq:smooth1:rep}
		\E \hat{F}(N_{1}, \ldots, N_{M})-\E  \hat F\left(\sum_{j=1}^{n}\varphi_{1}(\zeta_{j}), \ldots,\sum_{j=1}^{n}\varphi_{M}(\zeta_{j})\right) \ll \delta_0^{\alpha/8}
		\end{equation}
as in \eqref{eq:smooth1}.
	
	Applying this bound for the specific Gaussian case yields
	\begin{equation}\label{eq:smooth1:rep:2}
	\E \hat{F}(\tilde N_{1}, \ldots, \tilde N_{M})-\E  \hat F\left(\sum_{j=1}^{n}\varphi_{1}(\tilde \zeta_{j}), \ldots,\sum_{j=1}^{n}\varphi_{M}(\tilde \zeta_{j})\right) \ll \delta_0^{\alpha/8} .
	\end{equation}

	It remains to show that
	\begin{equation}\label{smooth2:rep}
	\E \hat F \left(\sum_{j=1}^{n}\varphi_{1}(\zeta_{j}), \ldots,\sum_{j=1}^{n}\varphi_{M}(\zeta_{j})\right)=\E \hat F\left(\sum_{j=1}^{n}\varphi_{1}\left (\tilde{\zeta}_{j}\right ), \ldots,\sum_{j=1}^{n}\varphi_{M}\left (\tilde{\zeta}_{j}\right ) \right )+O(\delta_0^{\alpha})
	\end{equation}
	where $\tilde{\zeta}_{j}$ are the roots of $\tilde{P}_{n}.$
	
	By \eqref{rc1}, the sums $\sum_{j=1}^{n}\varphi_{i}(\zeta_{j})$ can be well approximated by the sample sum
	\begin{equation}\label{rc1:rep}
	\left|  \sum_{j=1}^{n}\varphi_{i}(\zeta_{j})-\frac{2\delta_{i}^{2}}{9m_{i}}\sum_{k=1}^{m_{i}}\log|P_n(w_{ik})|\triangle\varphi_{i}(w_{ik})\right|=O(\delta_i^{\alpha})
	\end{equation}
	with probability at least $1-O\left (\delta_i^{\alpha}\right )$, where $w_{ik}$ are chosen independently, uniformly at random from the ball $B(1- 3\delta_{i}/2, 2\delta_{i}/3)$ and are independent of all previous random variables.

	Since $\sum_{i=1}^{M} \delta_i^{\alpha}\ll \delta_0^{\alpha}$, by applying \eqref{rc1:rep}, the left-hand side of \eqref{smooth2} equals
	\begin{equation} 
	\E K(\log |P_n(w_{ik})|)_{\substack{i=1,\ldots,M\\k=1,\dots,m_{i}}}+O(\delta_0^{\alpha})\nonumber
	\end{equation}
	and the right-hand side of \eqref{smooth2} equals 
	\begin{equation} 
	\E K(\log |\tilde P_n(w_{ik})|)_{\substack{i=1,\ldots,M\\k=1,\dots,m_{i}}}+O(\delta_0^{\alpha})\nonumber
	\end{equation}
	where $K$ is the function defined in \eqref{eq:K:def}.
		
For any fixed $w_{ik}$, Lemma \ref{lm:log-universality-general} asserts that the difference between these two identities is small 
		$$
		|\E K(\log|P_n(w_{ik})|)_{ik}-\E K(\log|\tilde P_n(w_{ik})|)_{ik}|=O(\delta_0^{\alpha}).
		$$
  
  This gives Proposition \ref{prop:univ:1M}.\qed

\subsection{From $\mj\cap (0,1)$ to $\mj$}\label{section:reduction:01}

In this section, we detail the modifications needed to prove Theorem \ref{thm:distribution_universality} from the proof of \eqref{eq:f1:2}. For Theorem \ref{thm:distribution_universality}, we need to show that
\begin{equation} 
\left|\E F(N_n (\mj))-\E F\left (\tilde N_n(\mj)\right )\right|\leq Ca_n^{c}+ Cn^{-c}.\nonumber
\end{equation}

Inequality \eqref{eq:f1:2} restricts to the interval $(0, 1)$ and says that
\begin{equation} 
\left|\E F(N_n (\mj\cap (0, 1)))-\E F\left (\tilde N_n(\mj\cap (0, 1))\right )\right|\leq Ca_n^{c}+ Cn^{-c}.\nonumber
\end{equation}

	We first decompose $N_{n}(\mj)$ and $\tilde N_n(\mj)$ into the sum of the numbers of real roots in the intervals $\mj\cap (0, 1)$, $\mj\cap (-1, 0)$, $\mj\cap (1, \infty)$, and $\mj\cap (-\infty, -1)$ and denote by $N^{(i)}_n$ and $\tilde N^{(i)}_n$ with $i=1, \dots, 4,$  the corresponding number of real roots. For example, 
	$$N^{(1)}_n = N_n(\mj\cap(0, 1))\quad\text{and}\quad \tilde N^{(1)}_n = \tilde N_n(\mj\cap(0, 1)),$$
	$$N^{(2)}_n = N_n(\mj\cap(-1, 0))\quad\text{and}\quad \tilde N^{(2)}_n = \tilde N_n(\mj\cap(-1, 0)),$$
	and so on.
	
	Note that
	$$N_n(\mj) = \sum_{i=1}^{4} N^{(i)}_n\quad\text{and}\quad \tilde N_n(\mj) = \sum_{i=1}^{4} \tilde N^{(i)}_n.$$
	
	It has been shown in proving \eqref{eq:f1:2} how to deal with $N^{(1)}_n$. To deal with $N^{(2)}_n$, note that there is a one-to-one correspondence between the real roots of $P_n(z)$ in $(-1, 0)$ and the real roots in $(0, 1)$ of the polynomial $P_n(-z)=\sum_{i=0}^{n} (-1)^{i}c_i\xi_i z^{i}$. Denote this new polynomial by $P^{(2)}_n(z)$ and the original polynomial $P_n(z)$ by $P_n^{(1)}(z)$. All arguments that have been used for $P_n^{(1)}$ to handle $N^{(1)}_n$ can be applied to $P^{(2)}_n$ to handle $N^{(2)}_n$.

	For $N^{(3)}_n$, there is a one-to-one correspondence between the roots of $P_n^{(1)}(z)$ in $(1, \infty)$ and the roots in $(0, 1)$ of the polynomial $ \frac{z^{n}}{c_n}P_n(z^{-1})= \sum_{i=0}^{n} \frac{c_{n-i}}{c_n}\xi_{n-i} z^{i}=: P^{(3)}_n(z)$. The coefficients $\frac{c_{n-i}}{c_n}$ of $P^{(3)}_n$ satisfy Condition \ref{cond_ci_rho} with $\rho = 0$, except for a negligible number of $i$, and hence the same arguments as for $P^{(1)}_n$ also apply for $P^{(3)}_n$.

	Similarly, for $N^{(4)}_n$, there is a one-to-one correspondence between the roots of $P_n^{(1)}(z)$ in $(-\infty, -1)$ and the roots in $(0, 1)$ of the polynomial $ P^{(3)}_n(-z)= \sum_{i=0}^{n} (-1)^{i}\frac{c_{n-i}}{c_n}\xi_{n-i} z^{i}=: P^{(4)}_n(z)$. All arguments that work for $P^{(3)}_n(z)$ also works for $P^{(4)}_n(z)$.
 
	In Section \ref{subsec:partition}, $N^{(1)}_n$ is partitioned into a sum of $N_i$, which is the number of roots of $P^{(1)}_n$ in a dyadic interval $(1- \delta_{i-1}, 1-\delta_i]$. Denote these $N_i$ by $N^{(1)}_i$. Denote by $N^{(2)}_i, N^{(3)_i}$, and $N^{(4)}_i$ the number of roots in the same interval of $P^{(2)}_n, P^{(3)}_n,$ and $P^{(4)}_n$, respectively. We have
	$$N_n(\mj) = \sum_{j=1}^{4}\sum_{i=1}^{M} N^{(j)}_i.$$
All other steps in the proof of \eqref{eq:f1:2} can now be written for the proof of Theorem \ref{thm:distribution_universality} by placing these $N^{(j)}_i$ in place of $N_i$. For example, Proposition \ref{prop:univ:1M} becomes
$$
\left |\E \hat{F}(N^{(1)}_{1}, \ldots, N^{(1)}_{M}, N^{(2)}_{1}, \ldots, N^{(2)}_{M}, \dots)-\E \hat{F}\left (\tilde{N}^{(1)}_{1}, \ldots,\tilde{N}^{(1)}_{M}, \tilde{N}^{(2)}_{1}, \ldots,\tilde{N}^{(2)}_{M}, \dots\right )\right |\ll \delta_0^{c}
$$
where $\hat{F}$ : $\mathbb{R}^{4M}\rightarrow \mathbb{R}$ is any function whose every partial derivative up to order $3$ is bounded by $1$.

\section{Proof of Corollary \ref{cor:moment_universality}}\label{sec:cor:moment_universality}
We define $\delta_0, \dots, \delta_M, N_{1}, \ldots , N_{M}$ as in the beginning of the proof of Theorem \ref{thm:distribution_universality}. Note that
$\delta_i\ge \delta_M\ge 1/n$ and $\delta_0^{c} = \Theta\left (a_n^{c}+n^{-c}\right )$.

To prove the first part of Corollary \ref{cor:moment_universality}, we first reduce to the interval $[1-a_n, 1-b_n)$ as explained in Section \ref{section:reduction:01}, namely, it suffices to show that  
\begin{equation}\label{eq:cor:moment:uni:restriction}
\left|\E \left (N_n^{k}[1-a_n, 1-b_n)\right )-\E \left (\tilde N_n^{k}[1-a_n, 1-b_n)\right )\right|\leq C\delta_0^{c}.
\end{equation}
We write $N := N_{n} [1-a_n, 1-b_n)$, $\tilde N := \tilde N_{n} [1-a_n, 1-b_n)$. 
Let $\mathcal{A}$ be the event on which $N\leq\log^{4}n$ (here, $4$ can be replaced by any large constant). Let $F$ be a smooth function that is supported on the interval $[-1, \log^{4}n+1]$ and $F(x)=x^{k}$ for all $x\in[0, \log^{4}n]$. Since $N$ is always an integer, it holds that $N^{k}1_{\mathcal{A}}=F(N)$. The function $F$ can be chosen such that all of its derivatives up to order $3$ are bounded by $O\left(\log^{4k}n\right)$. Applying Theorem \ref{thm:distribution_universality} to the rescaled function $(\log n)^{-4k} F$, we obtain
$$
\left|\E N^{k}\mathbf{1}_{\mathcal{A}}-\E \tilde{N}^{k}\mathbf{1}_{\tilde{\mathcal{A}}}\right|=\left|\E F(N)-\E F\left (\tilde{N}\right )\right|\ll \delta_0^{2c}\log^{-4k}n\ll \delta_0^{c}
$$
for some small constant $c$ where $\tilde {\mathcal A}$ is the corresponding event on which $\tilde N\leq\log^{4}n$.

To finish the proof of the first part, we show that the contribution from the complement of $\mathcal{A}$ is negligible, i.e., 
$$\E N^{k}\mathbf{1}_{\mathcal{A}^{c}}\ll \delta_0^{c}.$$
Since $M\ll \log n\ll \delta_0^{-c/2}$ by \eqref{cond-a-log} and since $N^{k} \leq M^{k}\sum_{i=0}^{M}N_{i}^{k}$, it suffices to show that for all $i$, $\E N_{i}^{k}\mathbf{1}_{\mathcal{A}^{c}}\ll \delta_0^{2c}$. Let $\mathcal{A}_{i}$ be the event on which $N_{i}\leq\log^{3}(1/\delta_{i})$. Note that $  \bigcap_{i=1}^{M}\mathcal{A}_{i}\subset \mathcal{A}$. Let $A$ be a large constant. By \eqref{eq:tail} of Lemma \ref{lm:bddness}, $\P (\mathcal{A}_{i}^{c})\ll \delta_i^{A}$. Thus, 
$$
\P (\mathcal{A}^{c})\leq\sum_{i=1}^M\P (\mathcal{A}_{i}^{c})\ll \sum_{i=1}^M\delta_{i}^{A}\ll \delta_0^{A}.
$$
This together with \eqref{eq:secondmoment} of Lemma \ref{lm:bddness} give
\begin{eqnarray}
\E N^{k}\mathbf{1}_{\mathcal{A}^{c}} &\ll& \log^{k} n\sum_{i=1}^{M} \E N_{i}^{k}1_{\mathcal{A}^{c}}\nonumber\\
&\ll&  \log^{k} n\sum_{i=1}^{M} \left (\E N_{i}^{2k}\right )^{1/2}\left (\P(\mathcal{A}^{c})\right )^{1/2}\ll  \log^{k}n\sum_{i=1}^{M}\delta_0^{A/2}\left (\log\frac{1}{\delta_i}\right )^{2k}.\nonumber
\end{eqnarray}
Since $\delta_i\ge 1/n$, the right most side is at most $(\log^{4k}n) \delta_0^{A/2}\ll \delta_0^{A/2-1}\ll \delta_0^{c}$ by \eqref{cond-a-log} and by choosing $A\ge 3$.

The second part of Corollary \ref{cor:moment_universality} follows from the first part by observing that
$$
\left  (\E N_n(\mj)\right   )^{2}-\left  (\E \tilde{N}_n(\mj)\right  )^{2} \ll \delta_0^{c}\left (2\E N_n(\mj)+O(\delta_0^{c})\right )\ll \delta_0^{c}\log^{2} n\ll \delta_0^{c/2} 
$$
where in the first inequality, we used the first part of Corollary \ref{cor:moment_universality} for $k=1$, in the second inequality, we used \eqref{eq:secondmoment} to get that
$$ \E N_n(\mj)\ll \sum_{i=1}^{M} \log(1/\delta_i)\le \sum_{i=1}^{M} \log n\ll \log^{2}n,$$
and in the last inequality, we used \eqref{cond-a-log}.  follows immediately by observing that
$$
\left  (\E N_n(\mj)\right   )^{2}-\left  (\E \tilde{N}_n(\mj)\right  )^{2} \ll \delta_0^{c}\left (2\E N_n(\mj)+O(\delta_0^{c})\right )\ll \delta_0^{c}\log^{2} n\ll \delta_0^{c/2} 
$$
where in the first inequality, we used the first part of Corollary \ref{cor:moment_universality} for $k=1$, in the second inequality, we used \eqref{eq:secondmoment} to get that
$$ \E N_n(\mj)\ll \sum_{i=1}^{M} \log(1/\delta_i)\le \sum_{i=1}^{M} \log n\ll \log^{2}n,$$
and in the last inequality, we used \eqref{cond-a-log}.  This completes the proof of Corollary \ref{cor:moment_universality}.

\section{Proof of Proposition \ref{prop:boundedge}}\label{sec:prop:boundedge} 
\subsection{Probability of multiple roots} We start by proving a useful tool that control the probability that the polynomial $P_{n}$ has many roots in a small interval. For any $x, y\in \mathbb{R}$, let
\begin{equation}\label{def:V}
V(x)   :=\Var P_n(x)=\sum_{i=0}^{n}c_{i}^{2}x^{2i} 
\end{equation}
and
\begin{equation}\label{def_r}
r(x, y):=\frac{\E P_n(x)P_n(y)}{\sqrt{V(x)}\sqrt{V(y)}}=\frac{\sum_{i=0}^{n}c_{i}^{2}x^{i}y^{i}}{\sqrt{(\sum_{i=0}^{n}c_{i}^{2}x^{2i})(\sum_{i=0}^{n}c_{i}^{2}y^{2i})}}.
\end{equation}
 \begin{lemma}\label{multipleroot} Assume that the random variables $\xi_i$ are iid standard Gaussian. 
	There exists a constant $C_{0}$ such that for any $0<s<1$, any $k, l\geq 2$, $ 1-  \frac{1}{C_{0}}\leq x<t<1$ and $y, z\in (x, t)$ satisfying
	$$
	\log\frac{1-x}{1-y}=\log\frac{1-z}{1-t}=\delta
	$$
	for some $\delta\in(0,1/2C_{0}]$, we have
	\begin{equation}\label{ine:multiple1}
	\P (N_n(x, y)\geq k)\ll (C_{0}\delta)^{ks} 
	\end{equation}
	and
	\begin{equation}\label{ine:multiple2}
	\P (N_n(x, y)\geq k, N_n(z, t)\geq l)\ll (C_{0}\delta)^{2ks}+(C_{0}\delta)^{2ls}+\frac{(C_{0}\delta)^{(k+l)s}}{\sqrt{1-r^{2}(y,t)}},
	\end{equation}
	where the implicit constants depend only on $s$, not on $k,l, x, y, \delta$.
\end{lemma}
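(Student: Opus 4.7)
The plan is to convert having many zeros in a short interval into smallness of $|P_n|$ at a single reference point, and then invoke Gaussian anti-concentration. For \eqref{ine:multiple1}, set $z_1 := (x+y)/2$; if $P_n$ has $k$ zeros in $(x, y)$ then all of them lie in $B(z_1, r)$ with $r := (y-x)/2$. Applying Jensen's inequality \eqref{eq:Jensen} on $B(z_1, r) \subset B(z_1, R)$ with $R := (1-x)/3$ gives
\[
k \;\le\; \frac{\log(M_1/M_2)}{\log\bigl((R^2+r^2)/(2Rr)\bigr)},
\]
where $M_j$ is the supremum of $|P_n|$ on the respective disk. Since the hypothesis $\log\tfrac{1-x}{1-y} = \delta$ gives $y - x \asymp \delta(1-x)$, one has $\log(R/r) \ge \log(1/\delta) - O(1)$, so the inequality rearranges to $|P_n(z_1)| \le M_2 \le (C\delta)^k M_1$ for a universal constant $C$. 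A moment argument analogous to the proof of Lemma \ref{lm:bddness} (using Conditions \eqref{cond_xi}, \eqref{cond_ci_rho}) yields that for arbitrary constants $A, \alpha_0 > 0$, one has $M_1 \le \sqrt{V(z_1)}\, \delta^{-\alpha_0}$ outside an exceptional event of probability $O(\delta^A)$.

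Combining these, outside an event of probability $O(\delta^A)$, the occurrence of $\{N_n(x, y) \ge k\}$ forces $|P_n(z_1)/\sqrt{V(z_1)}| \le (C\delta)^k \, \delta^{-\alpha_0}$. In the Gaussian setting $P_n(z_1)/\sqrt{V(z_1)}$ is a standard normal, so $\P(|\cdot| \le \eta) \le \eta$, and choosing $\alpha_0$ small enough depending on $1-s$ gives \eqref{ine:multiple1}. For \eqref{ine:multiple2} I apply the same reduction at both $z_1 := (x+y)/2$ and $z_2 := (z+t)/2$; outside an event of probability $O(\delta^A)$, the joint event $\{N_n(x,y) \ge k,\,N_n(z,t) \ge l\}$ is contained in $\{|X_1| \le \eta_k,\,|X_2| \le \eta_l\}$ where $X_j := P_n(z_j)/\sqrt{V(z_j)}$ and $\eta_k, \eta_l$ are the single-interval bounds associated with $k$ and $l$ respectively. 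The pair $(X_1, X_2)$ is a centered Gaussian vector with unit marginals and correlation $\rho := r(z_1, z_2)$, whose joint density is bounded by $C/\sqrt{1-\rho^2}$; hence
\[
\P\bigl(|X_1| \le \eta_k,\,|X_2| \le \eta_l\bigr) \;\le\; \min\!\Bigl(2\eta_k,\,2\eta_l,\,\tfrac{C\eta_k\eta_l}{\sqrt{1-\rho^2}}\Bigr) \;\le\; 2\eta_k + 2\eta_l + \tfrac{C\eta_k\eta_l}{\sqrt{1-\rho^2}}.
\]
A short computation from the explicit form \eqref{def_r} of $r$ shows $1 - r^2(z_1, z_2) \asymp 1 - r^2(y, t)$ on the relevant edge scale, so $\rho$ may be replaced by $r(y, t)$ in the denominator, yielding \eqref{ine:multiple2}.

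The main technical obstacle will be the uniform bound on $M_1 = \sup_{B(z_1, R)} |P_n|$: turning pointwise Markov/moment estimates into a $\delta^A$-tail for the disk supremum requires either a grid/union-bound argument combined with a crude Lipschitz estimate on $|P_n|$ (using $|P_n'(w)| \le \sum |c_i|\, i\,|w|^{i-1}$) or, in the Gaussian case, Borell--TIS type concentration for the supremum of a Gaussian process. A secondary subtlety is the comparison $1 - r^2(z_1, z_2) \asymp 1 - r^2(y, t)$, which is a direct calculation with the explicit kernel \eqref{def_r}; it exploits the assumption $x \ge 1 - 1/C_0$ to ensure that all the relevant points $x, y, z, t, z_1, z_2$ lie at comparable distance from $1$, on which the covariance kernel varies smoothly in each argument.
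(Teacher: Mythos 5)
Your reduction via Jensen's inequality is a genuinely different route from the paper, which instead uses Rolle's theorem to dominate $|P_n(y)|$ by the iterated integral $I_{x,y}=\int_x^y\int_x^{y_1}\cdots\int_x^{y_{k-1}}|P_n^{(k)}(y_k)|\,dy_k\cdots dy_1$ and then shows $\P\bigl(I_{x,y}\geq(C_0\delta)^{ks}\sqrt{V(y)}\bigr)\ll(C_0\delta)^{2ks}$ by Markov, H\"older and Gaussian hypercontractivity; the factor $(y-x)^k/k!$ produced by the iterated integral is exactly what makes this tail probability decay geometrically in $k$ at rate $O(\delta^s)$ per zero. Your version loses precisely this feature, and that is the genuine gap. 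The exceptional event on which $M_1=\sup_{B(z_1,R)}|P_n|$ exceeds $\sqrt{V(z_1)}\,\delta^{-\alpha_0}$ has probability $O(\delta^A)$ for a \emph{fixed} constant $A$, independent of $k$, so your final bound has the form $(C\delta)^{k}\delta^{-\alpha_0}+O(\delta^A)$. For $k>A/s$ the additive term $O(\delta^A)$ dwarfs the target $(C_0\delta)^{ks}$, so the stated conclusion --- whose implicit constant must be independent of $k$ --- does not follow. This uniformity is not cosmetic: the lemma is applied with $\delta$ of constant size and $k$ ranging up to $n$ (e.g.\ in the proof of \eqref{eq:root4}, where one sums $\sum_{j\geq2}j^{k}\P(N_n=j)\ll\sum_j j^{k}2^{-j}$, and in \eqref{st2}), and a $k$-independent additive error of size $\delta^{A}$ makes those sums blow up. To rescue the argument you would need a $k$-dependent threshold for $M_1$ (say $T_k=\sqrt{V(z_1)}\,\delta^{-\alpha_0 k}$, with $\P(M_1>T_k)$ itself decaying like $(C_0\delta)^{2ks}$), which you do not do.

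A secondary problem sits inside the $M_1$ bound, which you flag as the main obstacle but whose difficulty you underestimate. The natural Markov bound gives $\E M_1\ll(1-x)^{-\rho-1}$ while $\sqrt{V(z_1)}\asymp(1-x)^{-\rho-1/2}$, so $\P\bigl(M_1\geq\sqrt{V(z_1)}\,\delta^{-\alpha_0}\bigr)\ll(1-x)^{-1/2}\delta^{\alpha_0}$; the factor $(1-x)^{-1/2}$ is not controlled by any power of $\delta$, because $\delta$ and $1-x$ are independent parameters in this lemma ($1-x$ may be as small as $1/n$ while $\delta$ is a constant). Obtaining $M_1\ll\sqrt{V(z_1)}$ with a small exceptional probability genuinely requires a chaining estimate for the normalized Gaussian process on the disk followed by Borell--TIS concentration --- machinery the paper's Rolle-based argument avoids entirely. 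The bivariate step and the comparison $1-r^{2}(z_1,z_2)\asymp1-r^{2}(y,t)$ are plausible as computations, but they inherit the same $k,l$-uniformity defect through the exceptional event.
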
 

\begin{proof}[Proof of Lemma \ref{multipleroot}] We start by proving \eqref{ine:multiple2}. By Rolle's theorem and the fundamental theorem of calculus, if $P$ has at least $k$ zeros in the interval $(x, y)$ then
	$$
	|P_n(y)|\leq\int_{x}^{y}\int_{x}^{y_{1}}\cdots\int_{x}^{y_{k-1}}|P_n^{(k)}(y_{k})|dy_{k}\ldots dy_{1}=:I_{x,y}.
	$$

	Therefore,
	\begin{align*}
	\P (N_n(x, y)\geq k, N_n(z, t)\geq l) \quad\leq\quad& \P \left(I_{x,y}\geq\varepsilon_{1}\sqrt{V(y)}\right)+\P \left(I_{z,t}\geq\varepsilon_{2}\sqrt{V(t)}\right)\\
	&+\quad \P \left(|P_n(y)|\leq\varepsilon_{1}\sqrt{V(y)}, |P_n(t)|\leq\varepsilon_{2}\sqrt{V(t)}\right)
	\end{align*}
	where $\varepsilon_{1}:=(C_{0}\delta)^{ks}$, $\varepsilon_{2}:=(C_{0}\delta)^{ls}$ and 
	\begin{equation}\label{varbound:1}
	V(x)   =\Var P_n(x)=\sum_{i=0}^{n}c_{i}^{2}x^{2i}.
	\end{equation}
	By \eqref{cond_ci_rho}, we have the following estimate whose proof is deferred to Appendix \ref{app:proof:varbound} as it is merely algebraic:
	\begin{equation}\label{varbound}
	V(x)  = \frac{\Theta(1)}{(1-x+1/n)^{2\rho+1}}\quad \forall x\in(1-1/C, 1).
	\end{equation}

	Since $\left( \frac{P_n(y)}{\sqrt{V(y)}}, \frac{P_n(t)}{\sqrt{V(t)}}\right)$ is a Gaussian vector with mean 0 and covariance matrix \newline $\left[\begin{array}{cc}
	1 & r(y,t)\\
	r(y,t) & 1
	\end{array}\right]$, we have
	$$
	\P \left(|P_n(y)|\leq\varepsilon_{1}\sqrt{V(y)}, |P_n(t)|\leq\varepsilon_{2}\sqrt{V(t)}\right)\ll \frac{\varepsilon_{1}\varepsilon_{2}}{\sqrt{1-r^{2}(y,t)}}.
	$$
	It remains to show that
	\begin{equation}\label{intbound}
	\P \left(I_{x,y}\geq\varepsilon_{1}\sqrt{V(y)}\right)\ll (C_{0}\delta)^{2ks}.
	\end{equation}
	
	Since  $0<s<1$, there exists $h>0$ such that $s=  \frac{2+h}{4+h}$. By Markov's inequality, we have
	\begin{eqnarray}
	&&\left(\varepsilon_{1}\sqrt{V(y)}\right)^{2+h}\P \left(I_{x,y}\geq\varepsilon_{1}\sqrt{V(y)}\right) \le  \E \left(\int_{x}^{y}\int_{x}^{y_{1}}\cdots\int_{x}^{y_{k-1}}|P_n^{(k)}(y_{k})|dy_{k}\ldots dy_{1}\right)^{2+h}.\nonumber
	\end{eqnarray}
	By H\"older's inequality, the right-hand side is at most
	$$\left(\frac{(y-x)^{k}}{k!}\right)^{1+h}\E \int_{x}^{y}\int_{x}^{y_{1}}\cdots\int_{x}^{y_{k-1}}|P_n^{(k)}(y_{k})|^{2+h}dy_{k}\ldots dy_{1}$$
	and so
	\begin{eqnarray}
	\left(\varepsilon_{1}\sqrt{V(y)}\right)^{2+h}\P \left(I_{x,y}\geq\varepsilon_{1}\sqrt{V(y)}\right) \le \left(\frac{(y-x)^{k}}{k!}\right)^{2+h}\sup_{w\in(x,y)}\E |P_n^{(k)}(w)|^{2+h}. \label{intbd2}
	\end{eqnarray}
	
	For each $w\in (x, y)$, since $P_n^{(k)}(w)$ is a Gaussian random variable, using the hypercontractivity inequality for the Gaussian distribution (see, for example, \cite[Corollary 5.21]{massartconcentrationbook}), we have for some constant $C$, 
	\begin{eqnarray}
	\E |P^{(k)}(w)|^{2+h}&\ll& \left(\E |P_n^{(k)}(w)|^{2}\right)^{\frac{2+h}{2}}
	\ll  \left (\frac{C^{k}(k!)^{2}}{(1-y+1/n)^{2\rho+2k+1}}\right )^{\frac{2+h}{2}}\nonumber
	\end{eqnarray}
	where in the last inequality, we used an estimate similar to \eqref{varbound}.

	Plugging this and \eqref{varbound} into \eqref{intbd2}, we obtain
	\begin{equation*}
	\P \left(I_{x,y}\geq\varepsilon_{1}\sqrt{V(y)}\right)\ll  \frac{ (1-y+1/n)^{\frac{(2\rho+1)(2+h)}{2}}}{\varepsilon_{1}^{2+h}}\left(\frac{(y-x)^{k}}{k!}\right)^{2+h}\left(\frac{C^{k}(k!)^{2}}{(1-y+1/n)^{2\rho+2k+1}}\right)^{\frac{2+h}{2}}
	\end{equation*}
	which gives
	\begin{align*}
	\P \left(I_{x,y}\geq\varepsilon_{1}\sqrt{V(y)}\right) \ll \frac{ 1}{\varepsilon_{1}^{2+h}}\left(C\frac{y-x}{1-y+1/n}\right)^{k(2+h)}\ll  \frac{ 1}{\varepsilon_{1}^{2+h}}\left(C\frac{y-x}{1-y}\right)^{k(2+h)}.
	\end{align*}
	Using $\ep_1 = (C_{0}\delta)^{ks}$, $\frac{y-x}{1-y} = \frac{1-x}{1-y}-1 = e^{\delta}-1\le 2\delta$ for $\delta\le \frac{1}{2C_0}$ and $s = \frac{2+h}{4+h}$, we get
	\begin{equation} 
	\P \left(I_{x,y}\geq\varepsilon_{1}\sqrt{V(y)}\right) \ll\frac{1}{(C_{0}\delta)^{ks(2+h)}}(2C\delta)^{k(2+h)}\ll (C_{0}\delta)^{2ks} \nonumber
	\end{equation}
	by choosing $C_0\ge 2C$. This proves \eqref{ine:multiple2}.

	The inequality \eqref{ine:multiple1} is obtained by the same reasoning:
	\begin{align*}
	\P (N_n(x, y)\geq k) &\quad\leq\quad \P \left(I_{x,y}\geq\varepsilon_{1}\sqrt{V(y)}\right)+\P \left(|P_n(y)|\leq\varepsilon_{1}\sqrt{V(y)}\right).
	\end{align*}
	Thus
	\begin{equation}
	 \P (N_n(x, y)\geq k) \ll   (C_{0}\delta)^{2ks} +\varepsilon_{1}\ll (C_{0}\delta)^{ks}.\nonumber
	\end{equation}
	
	This completes the proof of Lemma \ref{multipleroot}.
\end{proof}

\subsection{Partition into pieces}
 Let $\mathfrak A$  be the right-hand side of \eqref{eq:prop:boundedge}:
$$\mathfrak{A} :=\begin{cases}
 \left (\log a_n\right )^{2k} + \log^{k} (nb_n) \quad \text{if } b_n\ge 1/n,\\
 \left (\log a_n\right )^{2k} \quad \text{if } b_n < 1/n.
\end{cases}.$$

Writing $\R\setminus \mj$ as a union of four sets $T_1:= [0, 1]\setminus \mj$, $T_2:=[-1, 0]\setminus \mj$, $T_3:=(1, \infty)\setminus \mj$ and $T_4:=(-\infty, -1)\setminus \mj$ and using triangle inequality, we reduce Proposition \ref{prop:boundedge} to showing that for each $1\le i\le 4$, 
\begin{equation}\label{eq:prop:boundedge:01}
\E N_{P_n}^{k}(T_i)=\E N_n^{k}(T_i)\ll \mathfrak A.
\end{equation}

We only prove \eqref{eq:prop:boundedge:01} for $i=1$; the proofs for the remaining $i=2, 3, 4$ are similar.
Since $\mathfrak A\gg 1$, by triangle inequality, \eqref{eq:prop:boundedge:01} follows from showing that for some large constant $C$, 
\begin{equation}\label{eq:root1}
\E N^{k}_{n}[0, 1-1/C]\ll 1,
\end{equation}
\begin{equation}\label{eq:root2}
\E N^{k}_{n} (1-1/C, 1-a_n)\ll \left (\log a_n^{-1}\right )^{2k},
\end{equation}
\begin{equation}\label{eq:root3}
\E N^{k}_{n} \left (1-\frac{a_n}{n}, 1\right )\ll 1,
\end{equation}
and 
\begin{equation}\label{eq:root4}
\E N^{k}_{n} \left (1-b_n, 1-\frac{a_n}{n}\right )\ll \mathfrak A
\end{equation}
where we note that if $1-b_n> 1-\frac{a_n}{n}$ then the interval $\left (1-b_n, 1-\frac{a_n}{n}\right )$ is empty and \eqref{eq:root4} is vacuously true.

The bound \eqref{eq:root1} is precisely the content of \cite[Lemma 2.5]{DOV}.  In the following sections, we show \eqref{eq:root2}, \eqref{eq:root3}, and \eqref{eq:root4}.

\subsection{Proof of \eqref{eq:root2}} Dividing the interval $(1-1/C, 1-a_n)$ into dyadic intervals $I_0:=\left (1-\frac{1}{C}, 1-\frac{1}{2C}\right ]$,  $I_1:=\left (1-\frac{1}{2C}, 1-\frac{1}{4C}\right ]$, $\dots$, $I_m:=\left (1-\frac{1}{2^{m}C}, 1-a_n\right )$ (where $\frac{1}{2^{m}C}\ge a_n > \frac{1}{2^{m+1}C}$) and applying triangle inequality together with \eqref{eq:secondmoment}, we obtain 
$$\left (\E N_n^{k} (1-1/C, 1-a_n)\right )^{1/k}\le \sum_{i=0}^{m} \left (\E N_n^{k} (I_i)\right )^{1/k}\ll \sum_{i=0}^{m} \log (2^{i}C)\ll (\log a_n^{-1})^{2}.$$
Thus, 
$$\E N_n^{k} (1-1/C, 1-a_n) \ll (\log a_n^{-1})^{2k}$$
proving \eqref{eq:root2}.

\subsection{Reducing to Gaussian}
To prove \eqref{eq:root3} and \eqref{eq:root4}, applying \eqref{eq:cor:moment:uni:restriction} to the intervals \newline $\left (1-\frac{a_n}{n}, 1\right )$ and $\left (1-b_n, 1-\frac{a_n}{n}\right )$, we get
$$
\left|\E N_n^{k}\left (1-\frac{a_n}{n}, 1\right ) -\E  \tilde N_n^{k}\left (1-\frac{a_n}{n}, 1\right ) \right|\ll n^{-c}\ll 1
$$
and 
$$
\left|\E N_n^{k}\left (1-b_n, 1-\frac{a_n}{n}\right ) -\E   \tilde N_n^{k}\left (1-b_n, 1-\frac{a_n}{n}\right ) \right|\leq Cb_n^{c}+ Cn^{-c}\ll 1\ll \mathfrak A.
$$
Thus, it remains to prove \eqref{eq:root3} and \eqref{eq:root4} when the random variables $\xi_i$ are iid standard Gaussian. So for the rest of this proof, we assume that it is the case.  

\subsection{Proof of \eqref{eq:root3}}
For \eqref{eq:root3}, we use H\"older's inequality and \eqref{eq:secondmoment} to conclude that 
\begin{eqnarray}
\E N^{k}_{n} \left (1-\frac{a_n}{n}, 1\right ) &\le& \left (\E N^{2k-1}_{n} \left (1-\frac{a_n}{n}, 1\right )\right )^{1/2}\left (\E N_{n} \left (1-\frac{a_n}{n}, 1\right )\right )^{1/2}\nonumber\\
&\ll& (\log n)^{2k-1} \left (\E N_{n} \left (1-\frac{a_n}{n}, 1\right )\right )^{1/2}.\label{eq:root3:KR}
\end{eqnarray}
Using the Kac-Rice formula \eqref{eq:KacRice}, we get 
\begin{equation}\label{eq:root3:KR1}
\E N_n\left (1-\frac{a_n}{n}, 1\right )=   \frac{1	}{\pi}\int_{1-\frac{a_n}{n}} ^{1} \frac{\sqrt{\sum_{i=0}^{n}\sum_{j=i+1}^{n} c_i^{2} c_j^{2}(j-i)^{2} t^{2i+2j-2}}}{\sum_{i=0}^{n} c_i^{2}t^{2i}}dt \ll  \int_{1-\frac{a_n}{n}} ^{1} n dt = a_n. 
\end{equation}
where we used $|j-i|\le n+1$ and $\sum_{i=0}^{n}\sum_{j=i+1}^{n} c_i^{2} c_j^{2} t^{2i+2j} = \left (\sum_{i=0}^{n} c_i^{2}t^{2i}\right )^{2}$.
Plugging this into \eqref{eq:root3:KR} and using \eqref{cond-a-log}, we obtain
\begin{equation}
\E N^{k}_{n} \left (1-\frac{a_n}{n}, 1\right ) \ll (\log n)^{2k-1}a_n^{1/2}\ll 1.
\end{equation}

\subsection{Proof of \eqref{eq:root4}}
	By \eqref{ine:multiple1}, for every interval $[x, y]$ with 
	\begin{equation}\label{eq:such:interval}
	1-b_n\le x<y<1-\frac{a_n}{n}\quad \text{and}\quad \log\frac{1-x}{1-y} = \frac{1}{C},
	\end{equation}
	where $C$ is a sufficiently large constant,	we have
	$$\E N^{k}_n(x, y) \le \E N_n(x, y) + \sum _{j=2}^{\infty} j^{k} \P\left (N_n(x, y)= j\right )\ll \E N_n(x, y) + \sum _{j=2}^{\infty} j^{k} 2^{-j} = \E N_n(x, y) + O(1).$$
	Dividing the interval $\left (1-b_n, 1-\frac{a_n}{n}\right )$ into $O\left (\log \frac{n}{a_n} + \log b_n\right ) = O\left (\log \frac{nb_n}{a_n}\right )$ intervals that satisfy \eqref{eq:such:interval}, we obtain
 \begin{equation}\label{eq:root4:0}
\E N^{k}_{n} \left (1-b_n, 1-\frac{a_n}{n}\right )\ll \left (\log \frac{nb_n}{a_n}\right )^{k-1} \E N_n \left (1-b_n, 1-\frac{a_n}{n}\right ) + \left (\log \frac{nb_n}{a_n}\right )^{k-1} .
\end{equation}
	So, \eqref{eq:root4} follows from \eqref{eq:root4:0} and the following
	\begin{equation}\label{eq:mean:root}
	\E N_n \left (1-b_n, 1-\frac{a_n}{n}\right ) \ll \max\{1, \log(nb_n)\}
	\end{equation}
which can be deduced from
		\begin{equation}\label{eq:mean:root:C:1}
		\E N_n \left (1-b_n, 1-\frac{C}{n}\right ) \ll  \log(nb_n) \quad\text{if}\quad b_n\ge C/n
	\end{equation}
and
	\begin{equation}\label{eq:mean:root:C:2}
	\E N_n \left (1-\frac{C}{n}, 1-\frac{a_n}{n}\right ) \ll 1.
\end{equation}

To prove \eqref{eq:mean:root:C:1}, let $c_{i, \rho}:= \sqrt{\frac{(2\rho+1)\dots(2\rho+i)}{i!}}$. We have $c_{i, \rho}= \Theta(c_i)$ for all $i\ge N_0$ thanks to  assumption \ref{cond_ci_rho}.

	Using Kac-Rice formula \eqref{eq:KacRice}, we have 
	\begin{equation}\label{eq:KR1:1}
	\E N_n\left (1-b_n, 1-\frac{C}{n}\right )  \ll   \int_{1-b_n}^{1-\frac{C}{n}} \frac{\sqrt{\sum_{i=0}^{n}\sum_{j=i+1}^{n} c_{i, \rho}^{2} c_{j, \rho}^{2}(j-i)^{2} t^{2i+2j-2}}}{\sum_{i=0}^{n} c_{i, \rho}^{2}t^{2i}}dt.
	\end{equation}   
	We use \cite[Lemma 10.3]{DOV} with $h(k) = c_{i, \rho}^{2}$ which estimates the above integrand uniformly over the interval $\left (1-b_n, 1-\frac{C}{n}\right )$ and asserts that
	\begin{eqnarray}
	\frac{\sqrt{\sum_{i=0}^{n}\sum_{j=i+1}^{n} c_{i, \rho}^{2} c_{j, \rho}^{2}(i-j)^{2} t^{2i+2j-2}}}{\sum_{i=0}^{n} c_{i, \rho}^{2}t^{2i}} &\ll & \frac{\sqrt{2\rho+1}}{2\pi(1-t)} +   (1-t)^{\rho-1/2} + \frac{1}{n(1-t)^{2}}\nonumber.		
	\end{eqnarray}
	which is $\ll \frac{1}{1-t}$ by the assumption $\rho>-1/2$.
	
	That gives \eqref{eq:mean:root:C:1} because
\begin{equation}\label{eq:root4:1}
\E N_n \left (1-b_n, 1-\frac{C}{n}\right ) \ll  \int_{1-b_n}^{1-\frac{C}{n}} \frac{1}{1-t}dt\ll \log n + \log b_n = \log(nb_n).
\end{equation}

	For \eqref{eq:mean:root:C:2}, we use the same bound as in \eqref{eq:root3:KR1} to obtain
 \begin{equation}\label{eq:root4:2}
\E N_n \left (1-\frac{C}{n}, 1 - \frac{a_n}{n}\right ) \ll  \int_{1-\frac{C}{n}}^{1 - \frac{a_n}{n}}n dt \ll 1.
\end{equation}
This proves \eqref{eq:mean:root:C:2} and completes the proof of \eqref{eq:root4}.

\section{Proof of Lemma \ref{lm:CLT_gau}}\label{sec:lm:CLT_gau}
Since the lemma only involves Gaussian random variables $\tilde \xi_i$, we simplify the notation and write $\xi_i$ for $\tilde \xi_i$ and $N_n(S)$ for $\tilde N_n(S)$ (this helps us to avoid 
multiple superscripts later on). Thus,  for this section, $\xi_{i}\sim N(0, 1)$ for all $i$. 

We will adapt the argument in Maslova \cite{Mas2}, which is to approximate the number of roots by a sum of independent random variables. 
 Since the random variables $\xi_i$ are now standard Gaussian, numerous technical steps in \cite{Mas2}, which may be impossible to reproduce without having $c_0= \dots =c_n=1$, 
 can be greatly simplified and applied to our general setting thanks to special properties of Gaussian variables.

\subsection{Approximate the number of real roots by the number of sign changes.}\label{subsection:approximate-sign}
Let $V$ and $r$ be defined as in \eqref{def:V} and \eqref{def_r}. Lemma \ref{multipleroot} asserts that in a small interval, it is  unlikely that the polynomial $P_n$ has more than $1$ root. If $P_n$ has at most $1$ root in an interval $(a, b)$ and does not vanish at $a$ and $b$ then $N_{n}(a, b)=1$ if $P_n(a)$ and $P_n(b)$ have different signs and $N_{n}(a, b)=0$ otherwise. Hence, on a small interval $(a,b)$, it is reasonable to 
approximate  $N_{n}(a, b)$ by the number of sign changes:
\begin{equation}\label{def_Nsign}
N_{n}^{\sign}(a,   b)=\frac{1}{2}-\frac{1}{2} \sign (P_{n}(a)P_{n}(b))
\end{equation}
where 
$$\sign (x): = \begin{cases}
1\quad \text{if } x> 0,\\
0\quad \text{if } x= 0,\\
-1\quad \text{if } x>  0.
\end{cases}$$

The following lemma estimates the accuracy  of this approximation for a long interval. 

\begin{lemma}[Approximate by sign changes] \label{lmm:tosignchange} Assume that the $\xi_i$ are iid standard Gaussian. For any positive constant $\ep$, there exist constants $C, C'$  such that the following holds. Let $T>1/C$ and $a, b$  be such that $1-a_n\leq a<b\leq 1-b_n$ and $\log\frac{1-a}{1-b}=T$. Let $j_0=\delta^{-1}\log(1-a)^{-1}$  and $j_{1}=\delta^{-1}\log(1-b)^{-1}$ where $\delta$ is any number with 
	$$\exp(-(\log\log n)^{1+\varepsilon})<\delta<1/C.$$ 
	Assume (without loss of generality) that $j_0$ and $j_{1}$  are integers and let  $x_{j}=1-\exp(-j\delta)$ for all  $j=j_{0}, \ldots , j_{1}.$ Let
$$S=S_{a,b,\delta}= N_n[a, b) =\sum_{j=j_{0}}^{j_{1}-1}N_n[x_{j}, x_{j+1})\quad \text{and}\quad S^{\sign}=S_{a,b,\delta}^{\sign}=  \sum_{j=j_{0}}^{j_{1}-1}N_n^{\sign}(x_{j}, x_{j+1}).$$
 Then
$$
\E (S-S^{\sign})^{2}\leq C'T^{2}\delta^{1-\ep}.
$$
\end{lemma}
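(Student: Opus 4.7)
Since $P_n$ is Gaussian, $P_n(x_j)\neq 0$ almost surely, so $N_n^{\sign}(x_j,x_{j+1}) = \mathbf{1}\{N_n[x_j,x_{j+1})\text{ is odd}\}$. Set $D_j := N_n[x_j,x_{j+1}) - N_n^{\sign}(x_j,x_{j+1})$; then $0 \le D_j \le N_n[x_j,x_{j+1})$, and $D_j = 0$ whenever $N_n[x_j,x_{j+1})\le 1$. Writing $S-S^{\sign}=\sum_j D_j$, I will bound the diagonal $\sum_j \E D_j^2$ and off-diagonal $\sum_{j\ne j'}\E D_j D_{j'}$ separately by $\ll T^2\delta^{1-\ep}$.

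Fix $s = 1 - \ep/4 < 1$. Each subinterval $(x_j,x_{j+1})$ satisfies $\log((1-x_j)/(1-x_{j+1})) = \delta$, so Lemma \ref{multipleroot}, \eqref{ine:multiple1}, yields $\P(N_n[x_j,x_{j+1})\ge k) \ll \delta^{ks}$. Consequently $\E D_j^2 \le \sum_{k\ge 2} k^2 \P(N_n[x_j,x_{j+1})=k) \ll \delta^{2s}$, and summing over the $O(T/\delta)$ intervals gives the diagonal bound $T\delta^{2s-1} = T\delta^{1-\ep/2}\ll T^2\delta^{1-\ep}$ (using $T > 1/C$).

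For the off-diagonal term, expand $\E D_j D_{j'} \le \sum_{k,l\ge 2} kl\,\P(N_n[x_j,x_{j+1})=k,\,N_n[x_{j'},x_{j'+1})=l)$. Combining the joint estimate \eqref{ine:multiple2} with the marginals $\P(\ldots)\le \min(\delta^{ks},\delta^{ls})$ and splitting the sum by whether $\max(k,l)\le 2\min(k,l)$ (the joint bound alone would give the divergent tail $\sum_l l\,\delta^{2ks}$, which is tamed by the marginal in the complementary region), I obtain
$$\E D_j D_{j'} \;\ll\; \delta^{4s}\Bigl(1 + \bigl(1 - r(x_{j+1},x_{j'+1})^2\bigr)^{-1/2}\Bigr).$$
To sum this over pairs, I use \eqref{varbound} together with the analogous estimate $\sum_{i=0}^{n} c_i^2 x^i y^i \asymp (1-xy)^{-(2\rho+1)}$ valid for $x,y$ close to $1$ (both derived from \ref{cond_ci_rho}); these give $r(x_{j+1},x_{j'+1}) \asymp \bigl(\operatorname{sech}(|j-j'|\delta/2)\bigr)^{2\rho+1}$, and hence $1 - r^2 \gtrsim \min((|j-j'|\delta)^2,\,1)$. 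Therefore
$$\sum_{j\ne j'}(1-r^2)^{-1/2} \;\ll\; \sum_{d=1}^{T/\delta}\frac{T}{\delta}\min\Bigl(\frac{1}{d\delta},\,1\Bigr) \;\ll\; \frac{T^2 + T\log(1/\delta)}{\delta^2},$$
and the off-diagonal bound becomes $\ll \delta^{4s-2}\bigl(T^2 + T\log(1/\delta)\bigr) = \delta^{2-\ep}\bigl(T^2 + T\log(1/\delta)\bigr) \ll T^2\delta^{1-\ep}$, where the last step uses $\delta<1$, $T>1/C$, and $\delta\log(1/\delta)\le 1$.

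The main obstacle is establishing the lower bound $1 - r(x_{j+1},x_{j'+1})^2 \gtrsim \min((|j-j'|\delta)^2,1)$ uniformly in $n$ and across the full range $\rho>-1/2$ permitted by \ref{cond_ci_rho}; this requires the variance/covariance asymptotics to remain sharp even where $1-x_{j'+1}$ is as small as $b_n$, which is what is ensured by the hypothesis $b\le 1-b_n$ together with the lower bound $\delta > \exp(-(\log\log n)^{1+\ep})$ keeping the $n$-dependent corrections in $\sum_{i\le n} i^{2\rho} z^i$ subdominant to the idealized $(1-z)^{-(2\rho+1)}$.
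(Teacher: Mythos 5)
Your overall architecture matches the paper's: expand $\E(S-S^{\sign})^2$ into diagonal and off-diagonal parts, control the diagonal by $\sum_{k\ge 2}k^2\P(N_j\ge k)\ll \delta^{2s}$ via \eqref{ine:multiple1}, control each off-diagonal term by a power of $\delta$ plus $\delta^{O(1)}(1-r^2)^{-1/2}$ via \eqref{ine:multiple2}, and then sum using $1-r^2\gtrsim\min((|j-j'|\delta)^2,1)$. Your combinatorial handling of the double sum over $k,l$ (splitting by $\max(k,l)\le 2\min(k,l)$ and using the marginal bound in the complementary region) is a clean alternative to the paper's truncation at $k_0=\delta^{-1/100}$ plus H\"older, and it does yield the claimed $\delta^{4s}$ since the $k=l=2$ term dominates a geometrically convergent sum; the exponent bookkeeping at the end is consistent (your $\min(1/(d\delta),1)$ should read $\max$, but the arithmetic you perform is that of the correct version).

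The genuine gap is the correlation lower bound, which you flag as "the main obstacle" but do not actually resolve. Your derivation "$\sum c_i^2x^iy^i\asymp(1-xy)^{-(2\rho+1)}$, hence $r\asymp(\cdot)^{2\rho+1}$, hence $1-r^2\gtrsim\min((|j-j'|\delta)^2,1)$" is a non sequitur: a two-sided bound on $r$ up to multiplicative constants gives \emph{no} lower bound on $1-r^2$ in the regime where $r$ is close to $1$, and that is precisely the regime of nearby pairs ($|j-j'|\delta\ll 1$) which dominates $\sum_{j\ne j'}(1-r^2)^{-1/2}$ and cannot be discarded (Cauchy--Schwarz on those pairs only gives $T\delta^{2s-2}$, far too large). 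Moreover, under Condition \eqref{cond_ci_rho} the coefficients are only pinned down up to constants $\tau_1,\tau_2$, so no argument can upgrade $r$ itself to $1+o(1)$ precision. The paper's fix is structural: it works with the exact formula $1-r^2=\frac{\sum_{i<k}c_i^2c_k^2(x^iy^k-x^ky^i)^2}{V(x)V(y)}$, whose numerator and denominator are separately comparable (up to constants) to the hyperbolic-coefficient versions (equation \eqref{eq:1-r:rho}) — so the $\Theta$-losses are harmless for $1-r^2$ even though they would be fatal for $r$ — and then, for the hyperbolic coefficients, computes $V(x)=(1+O(\ep_0))(1-x^2)^{-(2\rho+1)}$ with $\ep_0=\exp(-(\log\log n)^{1+2\ep})$, so that $r_{ij}=(1+O(\ep_0))s_{ij}^{2\rho+1}$ and the hypothesis $\delta>\exp(-(\log\log n)^{1+\ep})$ forces $\ep_0=o(1-s_{ij}^2)$, whence $1-r_{ij}^2=\Theta(1-s_{ij}^2)$. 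Without this two-step reduction your proof does not close.
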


\begin{proof}[Proof of  Lemma \ref{lmm:tosignchange}]
Note that $x_{j_{0}}=a, x_{j_{1}}=b$, and $j_{1}-j_{0}=\delta^{-1}T.$

We have
$$
\E (S-S^{\sign})^{2}=\sum_{i=j_{0}}^{j_{1}-1}\E (N_{i}-N_{i}^{\sign})^{2}+2\sum_{j_{0}\leq i<j\leq j_{1}-1}\E (N_{i}-N_{i}^{\sign})(N_{j}-N_{j}^{\sign})
$$
where $N_{j} :=N_n[x_{j}$, $x_{j+1}$) and $N_{j}^{\sign}:=N_n^{\sign}(x_{j}, x_{j+1}$).

By Lemma \ref{multipleroot}, we have

\begin{equation}\label{st2}
\sum_{i=j_{0}}^{j_{1}-1}\E (N_{i}-N_{i}^{\sign})^{2}\leq \sum_{i=j_{0}}^{j_{1}-1}\sum_{k=2}^{n}k^{2}\P (N_{i}=k)\ll  T\delta^{-1}\sum_{k=2}^{n}k^{2}(C_{0}\delta)^{k(1-\ep/2)}\ll T\delta^{1-\ep}.
\end{equation}

For each $j_{0}\leq i<j\leq j_{1}-1$, we have
\begin{align*}
\E (N_{i}-N_{i}^{\sign})(N_{j}-N_{j}^{\sign})\quad  \leq\quad & \sum_{k,l=2}^{n}kl\P (N_{i}=k, N_{j}=l).\nonumber
\end{align*}
Let $k_{0}:=\delta^{-1/100}$. We split the right-hand side into three sums: $2\le k, l\le k_0$ for the first sum, $k_0<k\le n$ and $2\le l\le n$ for the second sum, and $2\le k\le n$ and $k_0< l\le n$ for the third sum, and denote the corresponding sums by $K_1, K_2, K_3$, respectively.

By Lemma \ref{multipleroot}, letting $r_{ij}:=r(x_{i+1}, x_{j+1})$ gives
\begin{equation}
K_{1}\ll k_{0}^{2}\left[(C  \delta)^{4(1-\ep)}+\frac{(C\delta)^{4(1-\ep)}}{\sqrt{1-r_{ij}^{2}}}\right]\ll \delta^{3}+\frac{\delta^{3}}{\sqrt{1-r_{ij}^{2}}}.
\end{equation}
For $K_{2}$, we use H\"older's inequality to get
\begin{align*}
K_{2} \quad \leq\quad&  \E \left(N_{i}N_{j}\mathbf{1}_{N_{i}\geq k_{0}+1}\mathbf{1}_{N_{j}\geq 2}\right)\leq\left(\E N_{i}^{2}\mathbf{1}_{N_{i}\geq k_{0}+1}\right)^{1/2}\left(\E N_{j}^{2}\mathbf{1}_{N_{j}\geq 2}\right)^{1/2}\\
\leq\quad & k_{0}^{-h+1}\left(\E N_{i}^{2h}\mathbf{1}_{N_{i}\geq 2}\right)^{1/2}\left(\E N_{j}^{2}\mathbf{1}_{N_{j}\geq 2}\right)^{1/2} \ll k_{0}^{-h+1} \delta^{2-\ep}\ll \delta^{3}
\end{align*}
where $h$ is a sufficiently large constant and in the next to last inequality, we used Lemma \ref{multipleroot} in a similar way as in \eqref{st2}.
%
%
Similarly, $K_{3}\ll \delta^{3}$. Hence,
$$
\E (N_{i}-N_{i}^{\sign})(N_{j}-N_{j}^{\sign})\ll  \delta^{3}+\frac{\delta^{3}}{\sqrt{1-r_{ij}^{2}}},
$$
and so
\begin{equation}\label{st}
\E (S-S^{\sign})^{2}\ll T\delta^{1-\ep}+\sum_{j_{0}\leq i<j<j_{1}} \left (\delta^{3}+\frac{\delta^{3}}{\sqrt{1-r_{ij}^{2}}}\right )\ll T \delta^{1-\ep}+\delta^{3}\sum_{j_{0}\leq i<j<j_{1}}\frac{1}{\sqrt{1-r_{ij}^{2}}}.
\end{equation}  

To complete the proof of the lemma, it remains to bound $1-r^{2}_{ij}$ from below. 
For each $0\le k\le n$, let 
$$c_{k, \rho} = \sqrt{\frac{(k+2\rho)\dots (1+2\rho)}{k!}}.$$
By Condition \eqref{cond_ci_rho}, $c_k = \Theta(c_{k, \rho})$ for all $k\ge N_0$ and thus, for all $x, y\in [a, b]$, 
$$V(x) = \sum_{k=0}^{n} c_{k}^{2}x^{2k} = \Theta\left (\sum_{k=0}^{n} c_{k, \rho}^{2}x^{2k}\right )$$
and
\begin{equation}\label{eq:1-r:rho}
1  -r^{2}(x, y) = \frac{\sum_{0\le i<k\le n} c_i^{2}c_k^{2} (x^{i}y^{k} - x^{k}y^{i})^{2}}{\left (\sum_{k=0}^{n} c_{k}^{2}x^{2k}\right )\left (\sum_{k=0}^{n} c_{k}^{2}y^{2k}\right )} = \Theta\left (\frac{\sum_{0\le i<k\le n} c_{i, \rho}^{2} c_{k, \rho}^{2} (x^{i}y^{k} - x^{k} y^{i})^{2}}{\left (\sum_{k=0}^{n} c_{k, \rho}^{2}x^{2k}\right )\left (\sum_{k=0}^{n} c_{k, \rho}^{2}y^{2k}\right )}\right ).
\end{equation}
Therefore, in order to bound $1 - r^{2}_{ij}$ from below, it suffices to assume that $c_k = c_{k, \rho}$ for all $0\le k\le n$ for the rest of the proof of Lemma \ref{lmm:tosignchange}.

For $c_k = c_{k, \rho}$, we have for every $x\in[1-a_n, 1-b_n]$,  
\begin{equation}\label{boundvariance}
V(x)=  \frac{1+O(\ep_0)}{(1-x^{2})^{2\rho+1}}.
\end{equation}
where $ \varepsilon_0=\exp\left (-(\log\log n)^{1+2\varepsilon}\right )$. We defer the simple verification of \eqref{eq:1-r:rho} and \eqref{boundvariance} to Appendix \ref{proof:boundavariance}. 

Letting $x=x_{i+1}$ and $y=x_{j+1}$ yields
$$
r_{ij}=\frac{V(\sqrt{xy})}{\sqrt{V(x)V(y)}}=(1+O(\ep_0))\left(\frac{\sqrt{(1-x^{2})(1-y^{2})}}{(1-xy)}\right)^{2\rho+1}.
$$

Let $s_{ij}:=  \frac{\sqrt{(1-x^{2})(1-y^{2})}}{(1-xy)}$. 
To estimate $1-r_{ij}^{2}$, let us first estimate $1-s_{ij}^{2}$. We have
$$
1-s_{ij}^{2}=\frac{(x-y)^{2}}{(1-x+x(1-y))^{2}}=\frac{\left(e^{(j-i)\delta}-1\right)^{2}}{\left(e^{(j-i)\delta}+x\right)^{2}}\geq\frac{(j-i)^{2}\delta^{2}}{\left(e^{(j-i)\delta}+1\right)^{2}}.
$$
Thus, if $(j-i)\delta\leq 1$ then $1-s_{ij}^{2}\gg (j-i)^{2}\delta^{2}$ and if $(j-i)\delta\geq 1$ then $1-s_{ij}^{2}=  \frac{\left(e^{(j-i)\delta}-1\right)^{2}}{\left(e^{(j-i)\delta}+x\right)^{2}}\gg 1$.
Combining this with the assumption that $\delta\ge \exp\left (-(\log\log n)^{1+\varepsilon}\right )$, we have $\ep_0 =o\left (1-s_{ij}^{2}\right )$ for all $i<j$. 
This implies
\begin{align*}
1-r_{ij}^{2}=1-s_{ij}^{2(2\rho+1)}+o\left (1-s_{ij}^{2}\right )=\Theta\left (1-s_{ij}^{2}\right ) = \Theta\left (\frac{(x-y)^{2}}{(1-xy)^{2}}\right )
\end{align*}
and
\begin{eqnarray}
\sum_{j_{0} \leq  i<j\leq j_{1}-1}\frac{1}{\sqrt{1-r_{ij}^{2}}} 
&\ll&  \sum_{i<j<i+\delta^{-1}}\frac{1}{\sqrt{1-s_{ij}^{2}}}+ \sum_{j\geq i+\delta^{-1}}\frac{1}{\sqrt{1-s_{ij}^{2}}}\nonumber\\
& \ll  &  \sum_{i<j<i+\delta^{-1}}\frac{1}{(j-i)\delta}+ \sum_{j\geq i+\delta^{-1}}1
\ll  T\delta^{-2}\log\delta^{-1}+T^{2}\delta^{-2} .\nonumber
\end{eqnarray}
Plugging this into \eqref{st}, we obtain 
\begin{equation}
\E (S-S^{\sign})^{2}\ll T\delta^{1-\ep}+T^{2}\delta+T\delta\log\delta^{-1}\ll T^{2}\delta^{1-\ep},\nonumber
\end{equation}
completing the proof of Lemma \ref{lmm:tosignchange}.
\end{proof}

\subsection{Truncate the polynomial $P_n$ to get independence.}\label{subsection:truncate}
We now show that  $N^{\sign}(x, y)$ and $N^{\sign}(z, t)$  (in some rough sense) are independent,  whenever the intervals $(x, y)$ and $(z, t)$ are relatively far apart. This allows us to approximate  $N_n(\mj)$ by a  sum of independent random variables, from which we can derive a  Central Limit Theorem. 

For any $x \in[1-a_n, 1-b_n]$, let
\begin{equation}\label{def:mx:Mx}
A_{x}=\log(1-x)^{-1}, m_{x}=(1-x)^{-1}A_{x}^{-\alpha}, \quad \text{and}\quad  M_{x}=\alpha(1-x)^{-1}\log A_{x}
\end{equation}
where $\alpha$ is a large constant to be chosen.

Define a truncated version of $P_n$ by
$$
Q(x)=\sum_{j=m_{x}}^{M_{x}}c_{j}\xi_{j}x^{j}.
$$

We get $Q$ from $P_n$ by a truncation in which the truncation points $m_x$ and $M_x$ depend on the value of $x$. 
Let
$$
\rho'=\min\{1, 1+2\rho\}>0.
$$

The following lemma asserts that $Q$ is a good approximation of $P_n$ and that $Q(x)$ and $Q(y)$ are independent when $x$ and $y$ are far apart. 
\begin{lemma}\label{lm:obser1}
  For every $x\in[1-a_n, 1-b_n]$, it holds that
\begin{equation}\label{eq:truncation:PQ}
0\leq \Var P_n (x)-\Var Q(x) = \Var \left (P_n(x)-Q(x)\right ) \ll A_{x}^{-\alpha\rho'}\E P_n^{2}(x),
\end{equation}
 Moreover, if $1-a_n\leq x<y\leq 1-b_n$ and if $  \log\frac{1-x}{1-y}\geq 2\alpha\log\log n$ then $Q(x)$ and $Q(y)$ are independent because
$$
M_{x}<m_{y}.
$$
\end{lemma}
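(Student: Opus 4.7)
The plan is to treat the two parts separately. Both reduce to elementary computations using condition \eqref{cond_ci_rho}, the estimate $V(x) \asymp (1-x)^{-(2\rho+1)}$ from \eqref{varbound}, and the explicit definitions of $m_x, M_x, A_x$.

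For the variance statement, observe that $Q(x)$ and $P_n(x) - Q(x) = \sum_{j \notin [m_x, M_x]} c_j \xi_j x^j$ are sums over disjoint index sets, hence (by independence and vanishing mean of the $\xi_j$'s) are independent with mean zero. This immediately yields both the equality $\Var P_n(x) - \Var Q(x) = \Var(P_n(x) - Q(x))$ and the nonnegativity. What remains is the bound
$$ \sum_{j < m_x} c_j^2 x^{2j} + \sum_{j > M_x} c_j^2 x^{2j} \ll A_x^{-\alpha \rho'} V(x). $$
I would estimate the two tails separately, using $|c_j| \ll j^\rho$ for $j \geq N_0$ (the $O(1)$ contribution from $j < N_0$ is negligible against $V(x)$, which is a negative power of $1-x$). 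Since $\rho > -1/2$, the low tail is controlled by
$$ \sum_{j \leq m_x} j^{2\rho} \ll m_x^{2\rho+1} = (1-x)^{-(2\rho+1)} A_x^{-\alpha(2\rho+1)}. $$
For the high tail, the inequality $x^{2j} \leq e^{-2j(1-x)}$ together with the substitution $s = 2t(1-x)$ reduces $\sum_{j > M_x} j^{2\rho} x^{2j}$ to an incomplete Gamma integral, yielding the estimate $(1-x)^{-(2\rho+1)} (\log A_x)^{2\rho} A_x^{-2\alpha}$. Dividing each tail by $V(x)$ gives ratios $\ll A_x^{-\alpha(2\rho+1)}$ and $\ll (\log A_x)^{2\rho} A_x^{-2\alpha}$ respectively. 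Both are $\ll A_x^{-\alpha \rho'}$ for $n$ large, since $\rho' \leq \min\{2\rho+1,\,2\}$ and the fixed large $\alpha$ absorbs the polylogarithmic slack.

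For the independence part, the inequality $M_x < m_y$ is equivalent to $\alpha\, A_y^\alpha \log A_x < (1-x)/(1-y)$. The hypothesis gives $(1-x)/(1-y) \geq (\log n)^{2\alpha}$, while under the standing setup (in particular $b_n \geq 1/n$, which holds in the intended application of this lemma via \eqref{eq:cond:ab:CLT}) we have $A_y = \log(1-y)^{-1} \leq \log n$, so $A_y^\alpha \leq (\log n)^\alpha$ and $\log A_x \leq \log\log n$. Hence the left-hand side is at most $\alpha (\log n)^\alpha \log\log n = o((\log n)^{2\alpha})$, establishing $M_x < m_y$ for $n$ sufficiently large. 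Independence of $Q(x)$ and $Q(y)$ then follows because they are measurable with respect to disjoint subsets of the independent family $\{\xi_j\}$.

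The principal (mild) subtlety is the exponent bookkeeping: the definition $\rho' = \min\{1,\,1+2\rho\}$ is tuned precisely so that both tail bounds fit under a single clean power $A_x^{-\alpha\rho'}$, and verifying this compatibility uniformly in the two regimes $-1/2 < \rho < 0$ and $\rho \geq 0$ is the only point requiring a bit of care. Everything else is routine.
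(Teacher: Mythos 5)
Your proof is correct and follows essentially the same route as the paper's: both split $\Var P_n(x)-\Var Q(x)$ into the low tail (bounded by $m_x^{2\rho+1}=(1-x)^{-(2\rho+1)}A_x^{-\alpha(2\rho+1)}$) and the high tail (bounded via the same incomplete-Gamma comparison the paper reuses from its proof of \eqref{varbound}), compare against $V(x)\asymp(1-x)^{-(2\rho+1)}$, and verify $M_x<m_y$ by the identical computation using $\frac{1-x}{1-y}\ge(\log n)^{2\alpha}$, $A_y\le\log n$ and $\log A_x\le\log\log n$. The only cosmetic difference is that you spell out the independence of $Q(x)$ and $P_n(x)-Q(x)$ for the variance identity, which the paper leaves implicit.
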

\begin{proof}[Proof of Lemma \ref{lm:obser1}] 
Since $b_n\ge 1/n$, for all $x\in [1-a_n, 1-b_n]$, $1-x\ge b_n\ge 1/n$. We write $x = 1-\frac{1}{L}$. 

By \eqref{varbound}, on the right-most side of \eqref{eq:truncation:PQ}, we have
$$ \Var P_n(x) = \frac{\Theta(1)}{(1-x)^{2\rho+1}} = \Theta\left (L^{2\rho+1}\right ).$$
On the other side, we have
$$\Var P_n (x) - \Var Q_n (x)\le \sum_{i=0}^{N_0} c_i^{2} x^{2i}+\sum_{i=N_0}^{m_x} c_i^{2} x^{2i} + \sum_{i=M_x}^{n} c_i^{2} x^{2i}\ll 1 + \sum_{N_0}^{m_x} i^{2\rho} + \sum_{i=M_x}^{n} i^{2\rho} e^{-2i/L}.$$
By the same argument as in \eqref{eq:varbound:L:3}, the right-most sum is at most 
\begin{eqnarray}
\sum_{i=M_x}^{n} i^{2\rho} e^{-2i/L}&\ll& L^{2\rho+1}\int_{M_x/L-1}^{\infty}t^{2\rho} e^{-2t} dt \nonumber\\
 &\ll& L^{2\rho+1} e^{-\alpha \log\log L} \ll (\log L)^{-\alpha} \E P_n^2(x) \ll  A_x^{-\alpha \rho'}\E P_n^2(x)\nonumber
\end{eqnarray}
where we used $M_x = \alpha L\log \log L$ and $A_x = \log L$ by the definition of $M_x$ and $A_x$.
Thus, 
$$\Var P_n (x) - \Var Q_n (x)\ll 1 + m_x^{2\rho+1} + A_x^{-\alpha \rho'}\E P_n^2(x)\ll A_x^{-\alpha \rho'}\E P_n^2(x)$$
where we used $m_x = L(\log L)^{-\alpha}$ by the definition of $m_x$. This proves \eqref{eq:truncation:PQ}.

As for the second part of Lemma \ref{lm:obser1}, writing $x = 1-\frac{1}{L}$ and $y = 1 - \frac{1}{K}$, we have $1\ll L\le K\le n$ and $\log \frac{K}{L}\ge 2\alpha\log\log n$, so
$$M_x=\alpha  L\log \log L\le L \log^{\alpha}n \le K\log^{-\alpha}n\le K\log^{-\alpha}K = m_y.$$
This proves Lemma \ref{lm:obser1}.
\end{proof}

\subsection{Approximating sign changes of $P_n$ by those of $Q$: short intervals}
Let 
\begin{equation}\label{def:Ntrun}
N^{\trun}_{P_n}(x, y)=N^{\trun}(x, y):=\frac{1}{2}-\frac{1}{2}\sign(Q(x)Q(y))
\end{equation}
be the sign change of $Q$ on the interval $(x, y)$. In the next lemma, we show that $N^{\trun}$ is a good approximation of the corresponding sign change $N_n^{\sign}$ of $P_n$ defined in \eqref{def_Nsign}.

\begin{lemma}[Approximation  by truncation I]\label{lm:truncation} Assume that the $\xi_i$ are iid standard Gaussian. Let $C$ be any positive constant. Let $1-a_n\leq x<y\leq 1-b_n$ with $\log\frac{1-x}{1-y}\le 1/C$. Then
$$
\E \left(N_n^{\sign}(x, y)-N^{\trun}(x, y)\right)^{2}\ll A_{x}^{-\alpha\rho'/3}.
$$
\end{lemma}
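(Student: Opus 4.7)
The plan is to reduce the $L^2$ difference to a probability of sign disagreement, then exploit Gaussian anti-concentration. Since $N_n^{\sign}(x,y) - N^{\trun}(x,y)$ takes values in $\{-1,-1/2,0,1/2,1\}$ and the values $\pm 1/2$ occur only on the measure-zero event that $P_n$ or $Q$ vanishes at $x$ or $y$, I have
$$\E\left(N_n^{\sign}(x,y)-N^{\trun}(x,y)\right)^2 = \P\!\left(\sign(P_n(x)P_n(y)) \ne \sign(Q(x)Q(y))\right).$$
The right-hand event, off a null set, forces $\sign P_n(z) \ne \sign Q(z)$ for some $z \in \{x,y\}$, so a union bound reduces matters to estimating $\P(\sign P_n(z) \ne \sign Q(z))$ at the two endpoints.

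Writing $R := P_n - Q$, the sign disagreement at $z$ implies $|Q(z)| \le |R(z)|$. Here $Q(z)$ and $R(z)$ are \textbf{independent} centered Gaussians because they are built from disjoint subsets of the $\xi_j$. Setting $\sigma_Q^2 := \Var Q(z)$ and $\sigma_R^2 := \Var R(z)$, Lemma \ref{lm:obser1} gives $\sigma_R^2 \ll A_z^{-\alpha\rho'}\Var P_n(z) = A_z^{-\alpha\rho'}(\sigma_Q^2+\sigma_R^2)$, hence $\sigma_R/\sigma_Q \ll A_z^{-\alpha\rho'/2}$ for $A_z$ large. Writing $Q(z) = \sigma_Q U$, $R(z) = \sigma_R V$ with $U,V$ independent standard normals, the ratio $|U|/|V|$ has a half-Cauchy-type density uniformly bounded near the origin, so
$$\P(|Q(z)|\le|R(z)|) = \P\!\left(|U|/|V|\le \sigma_R/\sigma_Q\right) \ll \sigma_R/\sigma_Q \ll A_z^{-\alpha\rho'/2}.$$

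Finally, the hypothesis $\log\frac{1-x}{1-y}\le 1/C$ gives $A_x \le A_y \le A_x + 1/C$, so $A_y \asymp A_x$. Summing the two endpoint contributions yields
$$\E\left(N_n^{\sign}(x,y)-N^{\trun}(x,y)\right)^2 \ll A_x^{-\alpha\rho'/2},$$
which is stronger than the stated bound $A_x^{-\alpha\rho'/3}$.

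The argument is essentially routine once Lemma \ref{lm:obser1} is in hand; there is no serious obstacle. The only care required is the observation that $P_n-Q$ and $Q$ are truly independent (a structural consequence of the disjoint-index truncation), which upgrades a naive Gaussian tail estimate to the sharp anti-concentration bound for a Cauchy-type ratio. The slack between the proven $-\alpha\rho'/2$ and the claimed $-\alpha\rho'/3$ exponent presumably just absorbs lower-order factors (e.g.\ the $1-O(A_x^{-\alpha\rho'})$ correction to $\sigma_Q^2 \asymp V(x)$) so that the lemma is robust under minor perturbations used elsewhere in the paper.
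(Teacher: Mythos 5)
Your proof is correct, and it takes a genuinely different route from the paper. The paper proves this lemma Fourier-analytically: it writes $N_n^{\sign}-N^{\trun}$ via the representation $\sign(a)=\frac{1}{\pi}\int_{\R}t^{-1}\sin(ta)\,dt$, splits the $(t,u)$-plane into a box $\{A_x^{-\alpha\rho'/6}\le |t|,|u|\le A_x^{\alpha\rho'/3}\}$ and its complement, kills the tail contribution with standard sine-integral estimates plus Gaussian anti-concentration of $\bar P_n,\bar Q$, and extracts the cancellation in the main term through product-to-sum identities that produce the factor $|\bar Q-\bar P_n|$, controlled in $L^2$ by Lemma \ref{lm:obser1}. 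You instead reduce the $L^2$ difference to the probability of a sign flip at one of the two endpoints, note that a flip at $z$ forces $|Q(z)|\le |P_n(z)-Q(z)|$, and then use the structural fact that $Q(z)$ and $P_n(z)-Q(z)$ are \emph{independent} Gaussians (disjoint index sets in the truncation) so that the event reduces to a Cauchy-ratio tail, giving $\ll \sigma_R/\sigma_Q\ll A_z^{-\alpha\rho'/2}$. All the steps check out: the reduction to endpoint disagreement, the independence, the variance ratio from \eqref{eq:truncation:PQ}, and the nondegeneracy of $Q(z)$ needed to discard the null sets are all in order, and you obtain a strictly better exponent than claimed. What each approach buys: yours is shorter, isolates exactly why the truncation works (independence of the remainder), and sharpens the bound; the paper's sign-integral machinery does not use that independence --- it only needs smallness of $\E|\bar Q-\bar P_n|^2$ --- so it is more robust to other approximations, and the same integral-representation toolkit is reused later in Section 6 for the covariance estimate. (Incidentally, if one drops the independence and runs the naive split $\P(|Q|\le\lambda)+\P(|R|\ge\lambda)$ with an optimized $\lambda$, one lands exactly on the exponent $-\alpha\rho'/3$ in the statement.)
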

\begin{proof}[Proof of Lemma \ref{lm:truncation}] 
 Using the formula
 $$\sign(a)=\frac{1}{\pi}\int_{\mathbb{R}}t^{-1} \sin(ta)dt,$$
 we have
\begin{eqnarray}
 &&N_n^{\sign}(x, y)-N^{\trun}(x, y)\nonumber\\
 &&\quad =\frac{1}{2\pi^{2}}\int_{\mathbb{R}}\int_{\mathbb{R}}t^{-1}u^{-1}\left(\sin(t\bar{Q}(x))\sin(u\bar{Q}(y))-\sin(t\bar{P}_n(x))\sin(u\bar{P}_n(y))\right)dtdu\nonumber
\end{eqnarray}
 where
 $$
 \bar{Q}(x):=\frac{Q(x)}{\sqrt{V(x)}},\quad \bar{Q}(y):=\frac{Q(y)}{\sqrt{V(y)}}, \quad \bar{P_n}(x):=\frac{P_n(x)}{\sqrt{V(x)}}\quad \text{and}\quad \bar{P_n}(y):=\frac{P_n(y)}{\sqrt{V(y)}}.
 $$
 
 Decompose the plane $\mathbb{R}\times \mathbb{R}$ of $(t, u)$ into two regions: the square  \newline $\left\{(t, u): A_{x}^{-\alpha\rho'/6}\leq|t|, |u|\leq A_{x}^{\alpha\rho'/3}\right\}$ and its complement. 
 We denote the corresponding integrals on these regions by $I_{1}$ and $I_{2}$, respectively.
 
 First, we show that the contribution from $I_{2}$ is negligible. Indeed, using the estimates
 \begin{align}
 \left|  \int_{|t|\leq\varepsilon}t^{-1}\sin(ta)dt\right|\ll & \min\left \{|a\varepsilon|, 1\right \} ,\nonumber \\
 \left|\int_{|t|\geq M}t^{-1}\sin(ta)dt\right|\ll & \min\left \{\frac{1}{|aM|}, 1\right \},\label{eq:sin:bound2} 
 \end{align}
 we obtain
 \begin{align*}
 |I_{2}| \quad \ll \quad  & (|  \bar{P_n}(x)|+|\bar{P_n}(y)|+|\bar{Q}(x)|+\bar{Q}(y)|)A_{x}^{-\alpha\rho'/6} \\
 &+\min\{1, |\bar{P_n}(x)|^{-1}A_{x}^{-\alpha\rho'/3}\} +\min\{1, |\bar{P_n}(y)|^{-1}A_{x}^{-\alpha\rho'/3}\}\\
 &+\min\{1, |\bar{Q}(x)|^{-1}A_{x}^{-\alpha\rho'/3}\}+\min\{1, |\bar{Q}(y)|^{-1}A_{x}^{-\alpha\rho'/3}\} .
 \end{align*}
 
 From this and the Gaussianity of $\bar P_n$ and $\bar Q$, we have
 $$
 \E I_{2}^{2}\ll A_{x}^{-\alpha\rho'/3}+\E \min\{1, Z^{-2}A_{x}^{-2\alpha\rho'/3} \}\ll A_{x}^{-\alpha\rho'/3}
 $$
 where $Z\sim \mathcal  N(0,1)$.
 
 For $I_{1}$, we need to make use of the cancellation between $P_n$ and $Q$. We rewrite $I_1$ as 
 \begin{eqnarray*}
 I_{1} &= & \frac{1}{\pi^{2}}\int_{A_{x}^{-\alpha\rho/6}}^{A_{x}^{\alpha\rho'/3}}\int_{A_{x}^{-\alpha\rho/6}}^{A_{x}^{\alpha\rho'/3}}t^{-1}u^{-1}\sin(t\bar{Q}(x))\cos\left (u\frac{\bar{Q}(y)+\bar{P_n}(y)}{2}\right )\sin\left (u\frac{\bar{Q}(y)-\bar{P_n}(y)}{2}\right )dtdu\\
 &+&\frac{1}{\pi^{2}}\int_{A_{x}^{-\alpha\rho/6}}^{A_{x}^{\alpha\rho'/3}}\int_{A_{x}^{-\alpha\rho/6}}^{A_{x}^{\alpha\rho'/3}}t^{-1}u^{-1}\sin(u\bar{P_n}(y))\cos\left (t\frac{\bar{Q}(x)+\bar{P_n}(x)}{2}\right )\sin\left (t\frac{\bar{Q}(x)-\bar{P_n}(x)}{2}\right )dtdu.
 \end{eqnarray*}
 Using $\left | \int_{b}^{c}t^{-1}\sin(ta)dt\right |\ll 1$ for all $0<b<c$, $\left|  \frac{\sin(a)}{a}\right|\leq 1$ for all $a\neq 0$, and \eqref{eq:truncation:PQ}, we get
 %
 \begin{eqnarray}
 \E I_{1}^{2} &\ll & \E\left [ \int_{A_{x}^{-\alpha\rho'/6}}^{A_{x}^{\alpha\rho'/3}}|\bar{Q}(y)-\bar{P_n}(y)|+|\bar{Q}(x)-\bar{P_n}(x)|dt\right ]^{2}\nonumber\\
 &\ll& A_{x}^{2\alpha\rho'/3} (\E |\bar{Q}(y)-\bar{P_n}(y)|^{2}+\E |\bar{Q}(x)-\bar{P_n}(x)|^{2})\ll A_{x}^{-\alpha\rho'/3} \nonumber
 \end{eqnarray}
 where we used Lemma \ref{lm:obser1} (recalling that the random variables $\xi_i$ are iid standard Gaussian and hence have mean 0) to get 
 $$\E |\bar{Q}(y)-\bar{P_n}(y)|^{2} = \Var (\bar{Q}(y)-\bar{P_n}(y)) \ll A_{y}^{-\alpha\rho'}  \ll A_{x}^{-\alpha\rho'}.$$
 
 This completes the proof of Lemma \ref{lm:truncation}.
 \end{proof}

 \subsection{Approximating sign changes of $P_n$ by those of $Q$: long intervals}
\begin{lemma}[Approximation  by truncation II]\label{lm:totruncation} Assume that the $\xi_i$ are iid standard Gaussian. There exist constants $C, C'$  such that the following holds. Let $T>1/C$ and $a, b$  be such that $1-a_n\leq a<b\leq 1-b_n$ and $\log\frac{1-a}{1-b}=T$. Let $j_0=\delta^{-1}\log(1-a)^{-1}$  and $j_{1}=\delta^{-1}\log(1-b)^{-1}$ where $\delta$ is any number in $(0, 1/C)$. Assume (without loss of generality) that $j_0$ and $j_{1}$  are integers and let 
	$x_{j}=1-\exp(-j\delta)$ for all $j=j_{0}, \ldots , j_{1}.$ 
Let
$$S^{\sign}=S_{a,b,\delta}^{\sign}=  \sum_{j=j_{0}}^{j_{1}-1}N_n^{\sign}(x_{j}, x_{j+1}) \quad\text{and} \quad   S^{\trun}=S^{\trun}_{a,b,\delta}=\sum_{j=j_{0}}^{j_{1}-1}N^{\trun}(x_{j}, x_{j+1}).$$
Then
$$
\E (S^{\trun}-S^{\sign})^{2}\leq C'\delta^{-2}T^{2}\left (\log \frac{1}{a_n}\right )^{-\alpha\rho'/3}=C'\delta^{-2}\left (\log\frac{1-a}{1-b}\right )^{2}\left (\log \frac{1}{a_n}\right )^{-\alpha\rho'/3}.
$$
\end{lemma}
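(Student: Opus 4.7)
The proof is essentially bookkeeping on top of Lemma \ref{lm:truncation}: the hard work has been done pointwise, and we just need to sum and use Cauchy--Schwarz.

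\textbf{Plan.} Write $D_j := N^{\trun}(x_j, x_{j+1}) - N_n^{\sign}(x_j, x_{j+1})$, so that
$$S^{\trun} - S^{\sign} = \sum_{j=j_0}^{j_1-1} D_j.$$
The number of terms is $j_1 - j_0 = \delta^{-1}\bigl(\log(1-b)^{-1} - \log(1-a)^{-1}\bigr) = \delta^{-1} T$. By the Cauchy--Schwarz inequality,
$$\E(S^{\trun} - S^{\sign})^2 \le (j_1 - j_0) \sum_{j=j_0}^{j_1-1} \E D_j^2 = \delta^{-1} T \sum_{j=j_0}^{j_1-1} \E D_j^2.$$

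\textbf{Applying Lemma \ref{lm:truncation} term-by-term.} For each $j$, we have $1-a_n \le x_j < x_{j+1} \le 1-b_n$ and
$$\log\frac{1-x_j}{1-x_{j+1}} = (j+1)\delta - j\delta = \delta.$$
Taking $C$ in the statement of Lemma \ref{lm:totruncation} at least as large as the constant in Lemma \ref{lm:truncation}, the condition $\delta < 1/C$ ensures we may apply Lemma \ref{lm:truncation} to each subinterval, obtaining
$$\E D_j^2 \ll A_{x_j}^{-\alpha\rho'/3}.$$
Since $x_j = 1 - e^{-j\delta}$, we have $A_{x_j} = \log(1-x_j)^{-1} = j\delta$, and because $j \ge j_0 = \delta^{-1}\log(1-a)^{-1}$ with $a \ge 1-a_n$, we get the uniform lower bound $A_{x_j} \ge \log(1-a)^{-1} \ge \log a_n^{-1}$. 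Therefore
$$\E D_j^2 \ll \bigl(\log a_n^{-1}\bigr)^{-\alpha\rho'/3} \quad \text{for all } j_0 \le j \le j_1-1.$$

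\textbf{Combining.} Summing this bound over the $\delta^{-1}T$ values of $j$ and plugging into the Cauchy--Schwarz estimate gives
$$\E(S^{\trun} - S^{\sign})^2 \le \delta^{-1}T \cdot \delta^{-1}T \cdot O\bigl((\log a_n^{-1})^{-\alpha\rho'/3}\bigr) = O\bigl(\delta^{-2} T^2 (\log a_n^{-1})^{-\alpha\rho'/3}\bigr),$$
which is precisely the desired inequality. There is no real obstacle here once Lemma \ref{lm:truncation} is in hand: the only subtle point is to verify that the bound $A_{x_j}^{-\alpha\rho'/3}$ is uniform in $j$ on the whole range $[j_0, j_1-1]$, which follows from $x_j \ge a \ge 1-a_n$. (Replacing Cauchy--Schwarz by a more refined bound exploiting independence of distant $D_j$'s via Lemma \ref{lm:obser1} would improve constants, but is not needed, since the crude $\delta^{-2}T^2$ factor is what the statement asks for.)
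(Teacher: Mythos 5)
Your proof is correct and follows essentially the same route as the paper: decompose into the subintervals $(x_j,x_{j+1})$, apply Lemma \ref{lm:truncation} to each, note $A_{x_j}=j\delta\ge\log a_n^{-1}$, and sum. The only cosmetic difference is that the paper uses the triangle inequality in $L^2$ (Minkowski) where you use Cauchy--Schwarz; both yield the identical bound $\delta^{-2}T^2(\log a_n^{-1})^{-\alpha\rho'/3}$.
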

\begin{proof}[Proof of Lemma \ref{lm:totruncation}]  

By Lemma \ref{lm:truncation}, we have
\begin{eqnarray}
\E \left(S^{\trun}-S^{\sign}\right)^{2} &\le& \left (\sum_{j=j_{0}}^{j_{1}-1}\left [\E (N^{\trun}(x_{j}, x_{j+1})-N_n^{\sign}(x_{j}, x_{j+1}))^{2}\right ]^{1/2}\right)^{2}\nonumber\\
&\ll& \left(\sum_{j=j_{0}}^{j_{1}-1}A_{x_{j}}^{-\alpha\rho'/6}\right)^{2}\ll \left(\sum_{j=j_{0}}^{j_{1}-1}(j\delta)^{-\alpha\rho'/6}\right)^{2}\nonumber.
\end{eqnarray}
By the definition of $j_0$ and $j_1$, we get
\begin{eqnarray}
\sum_{j=j_{0}}^{j_{1}-1}(j\delta)^{-\alpha\rho'/6} \le \delta^{-\alpha\rho'/6}(j_1-j_0)  j_0^{-\alpha\rho'/6} = \delta^{-1-\alpha\rho'/6}T j_0^{-\alpha\rho'/6}  \le  \delta^{-1}T  \left (\log \frac{1}{a_n} \right ) ^{-\alpha\rho'/6}\nonumber
\end{eqnarray}
proving Lemma \ref{lm:totruncation}.
\end{proof}

\subsection{Control of the fourth moment}
The following lemma controls the fourth moment of $S^{\trun}$.
\begin{lemma}[Bounded forth moment]\label{lm:boundforthmoment}
Under the setting of Lemma \ref{lm:totruncation} and an additional assumption that $\delta\geq \left (\log  \frac{1}{a_n}\right )^{-\alpha\rho'/24}$,  we have
	\begin{equation}\label{st5}
 \E \left (S^{\trun}_{a,b,\delta}-\E S^{\trun}_{a,b,\delta}\right )^{4}\ll T^{2}(\log\log n)^{2}= \left (\log\frac{1-a}{1-b}\right )^{2}(\log\log n)^{2}.
	\end{equation}
\end{lemma}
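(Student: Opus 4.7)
Set $\eta_j := N^{\trun}(x_j, x_{j+1}) - \E N^{\trun}(x_j, x_{j+1})$ for $j_0 \le j < j_1$, so that $S^{\trun}-\E S^{\trun}=\sum_j \eta_j$. These satisfy $|\eta_j|\le 1$, $\E \eta_j=0$, and (by the Gaussian angle formula $\E N^{\trun}(x_j,x_{j+1})=\frac{1}{\pi}\arccos r^Q(x_j,x_{j+1})$ together with the covariance computation $1-r^Q(x_j,x_{j+1})\ll \delta^2$ derived from assumption \eqref{cond_ci_rho} and the truncation estimate \eqref{eq:truncation:PQ}) the single-interval bound $\E \eta_j^2\le \E N^{\trun}(x_j,x_{j+1})\ll \delta$. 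The key structural input is Lemma \ref{lm:obser1}: whenever $|j-k|\ge L:=2\alpha(\log\log n)/\delta$, the truncation windows $[m_{x_j},M_{x_j}]$ and $[m_{x_k},M_{x_k}]$ are disjoint, so $\eta_j$ and $\eta_k$ are independent.

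Expanding
$$
\E(S^{\trun}-\E S^{\trun})^4=\sum_{j_1,j_2,j_3,j_4}\E[\eta_{j_1}\eta_{j_2}\eta_{j_3}\eta_{j_4}],
$$
the mean-zero property and the $L$-dependence together imply that every tuple containing an index at distance $\ge L$ from all the others contributes $0$. Sorting the surviving tuples, they fall into two regimes: (II) two-pair type, where the middle gap is $\ge L$ while both outer gaps are $<L$, and (I) chain type, where all three gaps are $<L$ (so all four indices lie in a window of size $<3L$).

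For regime (II), between-pair independence yields
$$
\sum_{\text{(II)}}\E\left[\prod_i \eta_{j_i}\right]=3\left(\sum_{|j-k|<L}\E \eta_j\eta_k\right)^{2}=3(\Var S^{\trun})^2.
$$
Controlling $\Var S^{\trun}$ reduces to the pair estimate: for any fixed $s\in(0,1)$ and any $j\ne k$ with $|j-k|<L$,
$$
\E \eta_j^2\eta_k^2\ \ll\ \delta^{2s}\quad\text{and}\quad|\E \eta_j\eta_k|\ \ll\ \delta^{2s}.
$$
This follows from the two-interval repulsion estimate \eqref{ine:multiple2} of Lemma \ref{multipleroot} applied to $Q$ in place of $P_n$ (the proof of \eqref{ine:multiple2} is purely Gaussian/analytic and transfers verbatim). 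Summing yields $\Var S^{\trun}\ll T+TL\delta^{2s-1}\ll T\log\log n$ for $s$ close to $1$ and for $\delta\ge (\log(1/a_n))^{-\alpha\rho'/24}$, so the regime (II) contribution is $\ll T^2(\log\log n)^2$.

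For regime (I), a Cauchy--Schwarz step $|\E \eta_a\eta_b\eta_c\eta_d|\le \sqrt{\E \eta_a^2\eta_b^2}\sqrt{\E \eta_c^2\eta_d^2}$ combined with the same second-moment bound gives $|\E \prod_i \eta_{j_i}|\ll\delta^{2s}$ per tuple; with the count $O((T/\delta)L^3)$ of chain tuples, the total contribution is $O(TL^3\delta^{2s-1})$, which under the lower bound on $\delta$ and with $\alpha$ chosen sufficiently small is absorbed into $T^2(\log\log n)^2$. The main obstacle is securing the sharp pair bounds at scale $\delta^{2s}$ rather than the trivial Cauchy--Schwarz output $\delta$: without this improvement there would remain a spurious $\delta^{-1}$ factor in both the variance and the chain totals. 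To handle the near-singularity $1/\sqrt{1-r^Q(x_{j+1},x_{k+1})^2}$ present in \eqref{ine:multiple2} when the two intervals are close, one invokes instead the one-interval repulsion \eqref{ine:multiple1} applied to the combined interval $(x_j,x_{k+1})$ in the small-separation regime.
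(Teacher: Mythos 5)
Your route is genuinely different from the paper's --- you expand the fourth moment over individual grid intervals $\eta_j$ and try to close the estimate with per-pair and per-tuple repulsion bounds at scale $\delta$, whereas the paper first groups the $\delta^{-1}T$ intervals into $T$ blocks $X_k$ of unit logarithmic length, proves $\E X_k^4\ll 1$ per block (by passing $S^{\trun}\to S^{\sign}\to N_n^4$ and applying \eqref{ine:multiple1} on $O(1)$ many subintervals of logarithmic length $1/(2C_0)$, where $(C_0\delta')^{ks}=2^{-ks}$ is geometrically summable in $k$), and only then does the combinatorial counting at the block level, where there are $j_2\ll T$ blocks and the dependence range is $O(\log\log n)$ blocks. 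Unfortunately your fine-scale version has a genuine quantitative gap and I do not believe it closes. First, the asserted uniform pair bounds $|\E\eta_j\eta_k|\ll\delta^{2s}$ and $\E\eta_j^2\eta_k^2\ll\delta^{2s}$ do not follow from \eqref{ine:multiple2}: that estimate is stated (and proved) for $k,l\ge 2$, i.e.\ at least two roots in \emph{each} interval, while a sign change only certifies one root per interval. The correct $k=l=1$ analogue carries the factor $1/\sqrt{1-r^2(x_{j+1},x_{k+1})}\asymp 1/((k-j)\delta)$, and your proposed fix --- one-interval repulsion on the combined interval $(x_j,x_{k+1})$ --- is only available when $(k-j)\delta\le 1/(2C_0)$ and there yields only $((k-j)\delta)^{2s}$, which degrades to $O(1)$ as $(k-j)\delta\to 1$; it never produces $\delta^{2s}$ uniformly over $|j-k|<L$.

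Second, even if one grants $\delta^{2s}$ per pair/tuple, the bookkeeping does not produce the claimed bound. The off-diagonal part of $\Var S^{\trun}$ is a sum over $\asymp (T/\delta)\cdot L$ pairs, giving $TL\delta^{2s-1}=2\alpha T(\log\log n)\,\delta^{2s-2}$, and the chain regime contributes $(T/\delta)L^3\cdot\delta^{2s}\asymp T(\log\log n)^3\delta^{2s-4}$. Since $s<1$ is a fixed constant, the factors $\delta^{2s-2}$ and $\delta^{2s-4}$ blow up as $\delta\to 0$; under the hypothesis $\delta\ge(\log\tfrac1{a_n})^{-\alpha\rho'/24}$ they are only bounded by powers of $\log\tfrac1{a_n}\le\log n$, and in the application $\delta=(\log n)^{-\beta}$ with $\beta\ge3$ so $\delta^{-2}\ge(\log n)^6$. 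These factors cannot be absorbed into $T^2(\log\log n)^2$ --- note $T$ may even be $O(1)$ in this lemma. Your escape hatch of taking ``$\alpha$ sufficiently small'' is not available: $\alpha$ is the truncation parameter fixed before this lemma, and \eqref{condab1} forces it to be large (moreover a small $\alpha$ would make the hypothesis $\delta\ge(\log\tfrac1{a_n})^{-\alpha\rho'/24}$ exclude the $\delta$ actually used). The underlying obstruction is that at scale $\delta$ the tuple counts grow like $\delta^{-1}$ per free index while the available repulsion bounds gain at most $\delta^{s}$ per root, so each free index costs a net $\delta^{s-1}\to\infty$; the paper's block decomposition is precisely the device that removes this loss, and some version of it (a per-block moment bound at unit logarithmic scale) appears unavoidable.
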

\begin{proof}[Proof of Lemma \ref{lm:boundforthmoment}]
Let $C_{0}$ be the constant in Lemma \ref{multipleroot}. 

\textbf{Case 1.} $ T\leq 1$. Since $T\gg 1$, it suffices to show that
\begin{equation}\label{st6}
\E (S^{\trun}_{a,b,\delta})^{4}\ll 1.
\end{equation}
For simplicity, we write $S^{\trun}$ for $S^{\trun}_{a,b,\delta}$. Let $S^{\sign}=S_{a,b,\delta}^{\sign}$ as in the setting of Lemma \ref{lm:totruncation}. 
By the definition of sign changes, we have  with probability $1$,
$$
S^{\trun} \ll j_1-j_0\ll \delta^{-1} \quad\text{and}\quad S^{\sign}\ll j_1-j_0\ll \delta^{-1}.
$$
%
Hence, by Lemma \ref{lm:totruncation}, H\"older's inequality, and the assumption that $\delta\geq \left (\log  \frac{1}{a_n}\right )^{-\alpha\rho'/24}$, we have
$$
\left |\E (S^{\trun})^{4}-\E (S^{\sign}) ^4\right |\ll \delta^{-3}\E \left |S^{\trun}-S^{\sign}\right |\ll \delta^{-4}\left (\log  \frac{1}{a_n}\right )^{-\alpha\rho'/6}\ll 1.
$$
Thus, it suffices to show that $\E (S^{\sign}) ^4\ll 1$. Since $N_n^{\sign} (x, y) \le N_n(x, y)$ for any interval $(x, y)$, 
$$\E (S^{\sign}) ^4\le  \E N_n^{4}(a, b).$$
Partition the interval $(a, b)$ into smaller intervals $(x, y)$ such that $\log\frac{1-x}{1 - y} = \frac{1}{2C_0}$. Since $\log\frac{1-a}{1 - b} = T$, the number of such sub-intervals is $2C_0T$. By \eqref{ine:multiple1}, for each of these intervals $(x, y)$, we have
$$\E N_n^{4}(x, y) \ll \sum_{k=1}^{\infty} k^{4} 2^{-k/2}\ll 1.$$
Using this and the assumption that $T\le 1$ of \textbf{Case 1}, we have $\E N_n^{4}(a, b)\ll 1$ as desired.
%

\textbf{Case 2.} $ T>1$. We decompose the sum in $S^{\trun}-\E S^{\trun}$ into blocks of size $\mu := \delta^{-1}$ of the form
\begin{align*}
X_{k}&=\sum_{j=j_{0}+(k-1)\mu}^{j_{0}+k\mu-1}\left (N^{\trun}(x_{j}, x_{j+1})-\E N^{\trun}(x_{j}, x_{j+1})\right )
\end{align*}
for each $k=1, \dots, j_2$, where  $j_2 = (j_1-j_0)\mu^{-1}$ is the number of blocks. Notice that $j_2 \ll T$. 

We have
\begin{eqnarray}
\E \left (S^{\trun}-\E S^{\trun}\right )^{4}&=&\E \left (\sum_{k=1}^{j_{2}}X_{k}\right )^{4}=\sum_{k=1}^{j_{2}}\E X_{k}^{4}+4\sum_{k\neq l}\E X_{k}^{3}X_{l}+6\sum_{k<l}\E X_{k}^{2}X_{l}^{2}\nonumber\\
&&+12\sum_{l<p;k\neq l,p}\E X_{k}^{2}X_{l}X_{p}+24\sum_{k<l<p<q}\E X_{k}X_{l}X_{p}X_{q}\nonumber\\
&=:&I_{1}+4I_{2}+6I_{3}+12I_{4}+24I_{5}.\nonumber
\end{eqnarray}
Note that each $X_{k}$ is of the form $S^{\trun}_{a',b',\delta}-\E S^{\trun}_{a',b',\delta}$ for some $a', b'$ that satisfy $  \log\frac{1-a'}{1-b'}\leq 1$. Thus, \eqref{st6} implies that $\E X_{k}^{4}\ll 1$ for all $k$. By H\"older's  inequality, each term in the summation of $I_1, \dots, I_5$ is of order $O(1)$ and so, 
$$I_1\ll j_2\ll T, \quad I_2+I_3\ll j_2^{2}\ll T^{2}.$$
To bound $I_{4}$ and $I_5$, we use the independence in Lemma \ref{lm:obser1} to conclude that if $k_2-k_1\ge 3\alpha\log\log n$ then $X_{k_2}$ and $(X_1, \dots, X_{k_1})$ are independent. Together with the fact that  $\E X_{k}=0$ for all $k$, we observe that most terms in the sums $I_4, I_5$ are zero. Ignoring these zero terms, we have 
$$I_{4} =   \sum_{l<p\leq l+C\log\log n}\E X_{k}^{2}X_{l}X_{p}\ll j_2^{2}\log \log n\ll T^{2}\log\log n,
$$
and
$$
I_5= \sum_{l-C\log\log n\leq k<l<p<q\leq p+C\log\log n} \E X_{k}X_{l}X_{p}X_{q}\ll T^{2}(\log\log n)^{2} .
$$
Putting the above bounds together, we obtain  Lemma \ref{lm:boundforthmoment}.
\end{proof}
\subsection{Proof of Lemma \ref{lm:CLT_gau}} 
Using the results in Sections \ref{subsection:approximate-sign} and \ref{subsection:truncate}, we shall approximate $N_n(\mj)$ by a sum of independent random variables to prove that it satisfies the CLT.
 We again recall that in this proof, the $\xi_i$ are iid standard Gaussian as mentioned at the beginning of this section. Recall the hypothesis \eqref{eq:cond:ab:CLT} that 
	\begin{equation}\label{eq:hypothesis:ab}
	(\log n)^{2}/n\le b_n<a_n\le \exp\left (-(\log n)^{c}\right ), \quad \log\frac{a_n}{b_n}=\Theta(\log n), \quad \text{and} \quad \Var  N_n(\mj) \gg \log n.
	\end{equation}
In particular, $a_n$ satisfies Condition \eqref{cond-a-log}. Let $\alpha, \beta$ be any constants satisfying 
	\begin{equation}\label{condab1}
	\beta\geq 3 \quad\text{and} \quad 2\beta+3\le c\alpha\rho'/24.
	\end{equation}
Let 
\begin{equation}\label{def:T:delta}
T:=\log\frac{a_n}{b_n}=\Theta(\log n), \quad \delta:=(\log n)^{-\beta},
\end{equation}
$$j_0:=\delta^{-1}\log  \frac{1}{a_n}\quad \text{and} \quad j_{1}:=\delta^{-1}\log  \frac{1}{b_n}.$$
We have $j_{1}-j_{0}=\delta^{-1}T$. Let
$$q:=\delta^{-1}T^{1/8} \quad \text{and} \quad p:=\delta^{-1}T^{1/2}.$$
Observe that $q=o(p)$ and $q$ grows with $n$.
For simplicity, we will assume that $j_{0}, j_{1}, p$ and $q$ are integers. In the case that they are not, we only need to replace them by their integer part. As before, let $x_{j}=1-\exp(-j\delta)$ for $j=j_{0},\ldots,j_{1}.$

Let $N^{\trun}_{P_n}(x_{j}, x_{j+1})$ be defined as in \eqref{def:Ntrun}. By Lemmas \ref{lmm:tosignchange} and \ref{lm:totruncation}, we can approximate $N_{n}(\mj \cap (0,1))$ by
$$
{S_1}^{\trun}:=S^{\trun}_{1-a_n,1-b_n,\delta}=\sum_{j=j_{0}}^{j_{1}-1}N^{\trun}_{P_n}(x_{j}, x_{j+1})
$$
and get an error term
\begin{equation}\label{}
\E \left(N_{n}(\mj \cap (0,1))-S_1^{\trun}\right)^{2}\ll T^{2}\delta^{1-\ep}+ T^{2}\delta^{-2}\left (\log \frac{1}{a_n}\right )^{-\alpha\rho'/3} =o(\log n)\nonumber
\end{equation}
where in the last inequality, we used \eqref{eq:hypothesis:ab} and \eqref{condab1}.

Combining this with the assumption that $\Var N_n(\mj)\gg \log n$, we get
 \begin{equation}\label{err1}
\E \left(N_{n}(\mj \cap (0,1))-S_1^{\trun}\right)^{2}=o(\log n)=o(\Var N_{n}(\mj)).
\end{equation}

Similarly, for the interval $\mj\cap (-1, 0)$, we approximate the number of real roots by 
$$S_2^{\trun} := \sum_{j=j_{0}}^{j_1-1} N^{\trun}_{P_n} (-x_{j+1}, -x_j).$$
And for the intervals $\mj\cap (1, \infty)$ and $\mj \cap (-\infty, -1)$, we respectively use 
$$S_3^{\trun} := \sum_{j=j_{0}}^{j_1-1} N^{\trun}_{R_n} (x_{j}, x_{j+1})\quad \text{and } S_4^{\trun} := \sum_{j=j_{0}}^{j_1-1} N^{\trun}_{R_n} (-x_{j+1}, -x_j)$$
where $R_n(x) = \frac{x^{n}}{c_n}P_n(x^{-1}) = \sum_{i=0}^{n} \frac{c_{n-i}}{c_n}\xi_{n-i} x^{i}$.
Let $S^{\trun} := \sum_{k=1}^{4} S^{\trun}_k$. We note that all of the lemmas proven earlier in this section hold for $R_n$ in place of $P_n$ (with  the value of $\rho$ being changed to  $0$ as in Section \ref{section:reduction:01}). From \eqref{err1} and its analog for $S_2^{\trun}, S_3^{\trun}, S_4^{\trun}$, we have
\begin{equation} 
\E (N_{n}(\mj)-S^{\trun})^{2}=o(\log n)=o(\Var N_{n}(\mj)).\nonumber
\end{equation}
Making use of Lemma \ref{lm:obser1}, we now approximate $S^{\trun}$ by a sum of independent random variables $Z_{k}, W_k$ as follows. Let
$$Z_{k}=  \sum_{j=j_{0}+kp+kq}^{j_{0}+(k+1)p+kq-1}\left (N^{\trun}_{P_n}(x_{j}, x_{j+1})+N^{\trun}_{P_n}(-x_{j+1}, -x_{j})\right ),$$
and
$$W_{k}=  \sum_{j=j_{0}+kp+kq}^{j_{0}+(k+1)p+kq-1}\left (N^{\trun}_{R_n}(x_{j}, x_{j+1})+N^{\trun}_{R_n}(-x_{j+1}, -x_{j})\right),\quad k=0, \dots, l-1$$

where
$$
l=\frac{j_{1}-j_{0}}{p+q}=\Theta(T^{1/2}) .
$$

By Lemma \ref{lm:obser1}, the random variables $Z_0, \dots, Z_{l-1}$ are mutually independent because $ q\delta=T^{1/8}\geq 2\alpha \log\log n$. Similarly for the random variables $W_0, \dots, W_{l-1}$. Moreover, all random variables $Z_0, \dots, Z_{l-1}, W_0, \dots, W_{l-1}$ are mutually independent because the $Z_s$ only involve the random variables $\xi_r$ where $r\le M_{1-b_n}\le n/2$ (by the definition \eqref{def:mx:Mx} and the left-most inequality in \eqref{eq:hypothesis:ab}) while the $W_s$ only involve the random variables $\xi_{n-r}$ where, again, $r\le M_{1-b_n}\le n/2$.

To evaluate the accuracy of the approximation of $S^{\trun}$ by $\sum_{k}(Z_k+W_k)$, consider 
$$
S^{\trun}-\sum_{k=0}^{l-1} (Z_{k}+W_k)=\sum_{k=0}^{l-1} (X_k+Y_{k})
$$
where
$$X_{k}=  \sum_{j=j_{0}+(k+1)p+kq}^{j_{0}+(k+1)p+(k+1)q-1}\left (N^{\trun}_{P_n}(x_{j}, x_{j+1})+ N^{\trun}_{P_n}(-x_{j+1}, -x_{j})\right ) , \quad \text{for }k=0, 1, \ldots , l-1,
$$
and  $Y_k$ are defined similarly with respect to $R_n$.

By Lemma \ref{lm:obser1}, the random variables $X_0, \dots, X_{l-1}, Y_0, \dots, Y_{l-1}$ are also mutually independent. Note that each $X_k, Y_{k}$ is of the form $S^{\trun}_{a,b,\delta}$ defined in Lemma \ref{lm:totruncation} for some $a$ and $b$ with $  \log\frac{1-a}{1-b}=q\delta=T^{1/8}.$ 
%
By \eqref{condab1} and the definition of $\delta$ in \eqref{def:T:delta}, $\delta=(\log n)^{-\beta}\geq \left (\log  \frac{1}{a_n}\right )^{-\alpha\rho'/24}$; this  allows us to use Lemma \ref{lm:boundforthmoment} to get
$$\E (X_{k}-\E X_{k})^{4}\ll  q^{2}\delta^{2} (\log\log n)^{2}  \quad\text{for all } k=0, \ldots, l-1.$$

 One can obtain a similar estimate for $Y_k$.
Thus, the error term of the approximation of $S^{\trun}$ by $  \sum_{k=0}^{l-1}(Z_{k}+W_k)$ has variance
\begin{eqnarray}
\Var \left(  \sum_{k=0}^{l-1}(X_k+Y_{k})\right)&=&\sum_{k=0}^{l-1}\Var X_{k}+\sum_{k=0}^{l-1}\Var Y_{k}\ll \sum_{k=0}^{l-1}q \delta  \log\log n  =o(\log n) .\nonumber
\end{eqnarray}

Combining this with \eqref{err1}, we get
\begin{equation}\label{err3}
\Var  \left(N_{n}(\mj)-  \sum_{k=0}^{l-1}(Z_k+W_k)\right)=o(\log n)=o\left (\Var N_{n}(\mj)\right ).
\end{equation}
%
The sum $\sum_{k=0}^{l-1}(Z_k+W_k)$ is a sum of independent random variables satisfying forth moment bound
\begin{eqnarray}
\sum_{k=0}^{l-1}\E (Z_{k}-\E Z_{k})^{4}+\sum_{k=0}^{l-1}\E (W_{k}-\E W_{k})^{4}&\ll& \sum_{k=0}^{l-1} p^{2}\delta^{2}(\log\log n)^{2}\nonumber\\
&=& o\left (\log ^{2}n\right ) =o\left(\Var \sum_{k=0}^{l-1}(Z_{k}+W_k)\right)^{2}\nonumber
\end{eqnarray}
where in the first inequality, we used Lemma \ref{lm:boundforthmoment}.
By the Lyapunov Central Limit Theorem (see for example, \cite{patrick1995probability}), the sum $\sum_{k=0}^{l-1}(Z_{k}+W_k)$ satisfies the Central Limit Theorem.

This and \eqref{err3} imply that $N_n(\mj)$ also satisfies the Central Limit Theorem, completing the proof of Lemma \ref{lm:CLT_gau}.
\qed
%
%
%

\section{Proof of Lemma \ref{lm:var_gau}} \label{sec:prop:var_gau}
 Since in  this section we only deal with Gaussian random variables, we again use $\xi_i$ to denote iid standard Gaussian variables (instead of $\tilde \xi_i$). This would help avoid complicated notation (such as double superscripts) later on.  By symmetry of the Gaussian distribution, we can assume that $c_i\ge 0$ for all $i$.

Let 
 \begin{equation}  \label{choice:an:bn:var}
 a_n=\exp\left (-2\log^{1/5}n\right ) \quad \text{and}\qquad b_n= \frac{1}{a_n n}
 \end{equation}
 and
 $$\mj := \pm (1-a_n, 1-b_n) \cup \pm (1-a_n, 1-b_n)^{-1}.$$
 Note that this $a_n$ satisfies Condition \eqref{cond-a-log}.

 By Proposition \ref{prop:boundedge}, 
 \begin{equation} \nonumber
 \E N_{n}^{2}\left (\R\setminus \mj\right )\ll  \log^{4}\frac{1}{a_n} =o (\log n).
 \end{equation}
Thus, to prove Lemma \ref{lm:var_gau}, it suffices to show that 
 \begin{equation}
 \Var N_{P_n}(\mj) \gg \log n.\nonumber
 \end{equation}
  We have
\begin{eqnarray}
N_{P_n} (\mj) &=&  N_{P_n} \left(\mj \cap [-1, 1]\right ) + N_{P_n}  \left(\mj \setminus [-1, 1]\right ) \nonumber\\
&=&  N_{P_n} \left(\mj \cap [-1, 1]\right ) + N_{R_n}  \left(\mj \cap [-1, 1]\right ) \nonumber,
\end{eqnarray}
where $R_n(x) = \frac{x^{n}}{c_n} P_n(x^{-1}) = \sum_{i=0}^{n} \frac{c_{n-i}}{c_n} \xi_{n-i} x^{i}$. 

Since $\Var (X+Y) =\Var X + \Var Y + \Cov (X,Y) \ge \Var X + \Cov (X,Y) $ for any two real random variables $X$ and $Y$, it suffices to show that 
\begin{equation}
\Var N_{R_n}  \left(\mj \cap [-1, 1]\right ) = \Omega(\log n) \label{varR}
\end{equation}
and
\begin{equation}
\Cov \left (N_{P_n}  \left(\mj \cap [-1, 1]\right ) , N_{R_n}  \left(\mj \cap [-1, 1]\right ) \right ) = o(\log n) \label{covPR}.
\end{equation}

\subsection{Universality for $R_n$}
In order to verify  \eqref{varR}, we  use the  universality method in a novel way. Instead of swapping the random variables $\xi_i$, we swap the deterministic 
coefficients $c_i$. This allow us to  couple $R_n$  with the Kac polynomial and the desired bound follows by known results concerning the variance of the Kac polynomial.  This swapping is possible thanks to the fact that the ``important" coefficients  are   $\frac{c_{n-i}}{c_n}$ which are close to 1 by \eqref{cond:ci:kac}.

Let 
$$\hat R_n(x) := \sum _{i=0}^{n} \xi_{n-i} x^{i}$$
be the corresponding Kac polynomial. We prove the following analogs of Theorem \ref{thm:distribution_universality} and Corollary \ref{cor:moment_universality} for $R_n$ and $\hat R_n$. 

\begin{proposition}\label{prop:Rdistribution}
	Assume that the $\xi_i$ are iid standard Gaussian. Let $\beta>0$ be any constant. There exists a constant $C>0$ such that for every function $F:\R\to \R$ whose derivative up to order 3 are bounded by 1 and for every $n$, we have
	$$\left |\E F\left (N_{R_n}\left (\mj\cap [-1, 1]\right )\right ) - \E F\left (N_{\hat R_n}\left (\mj\cap [-1, 1]\right )\right )\right |\le C(\log n)^{-\beta}.$$
\end{proposition}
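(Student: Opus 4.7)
The plan is to follow the universality framework of Theorem \ref{thm:distribution_universality} --- the dyadic decomposition of $\mj\cap[-1,1]$, the smoothing of indicators by $\varphi_i$, and the Jensen/Green reduction to a Riemann sum in $\log|\cdot|$ as in \eqref{rc1} --- but to perform the swap on deterministic coefficients rather than on random variables. Both $R_n$ and $\hat R_n$ satisfy Conditions \eqref{cond_xi}--\eqref{cond_ci_rho} (with effective $\rho=0$ in the bulk, cf.\ Remark \ref{rmk:reduction:01}), so the preliminary steps (Lemma \ref{lemma:aproximation1}, the repulsion bound Lemma \ref{lm:repulsion}, the moment bound \eqref{eq:secondmoment}, and the derivative estimates \eqref{eq:derivativebound}) go through verbatim. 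The problem reduces to an analog of Lemma \ref{lm:log-universality-smooth}: for any smooth $L$ satisfying \eqref{eq:derivativebound},
\begin{equation*}
\Bigl|\E L\bigl(R_n(w_{ik})/\sqrt{V(w_{ik})}\bigr)_{ik} - \E L\bigl(\hat R_n(w_{ik})/\sqrt{V(w_{ik})}\bigr)_{ik}\Bigr|\ll (\log n)^{-\beta},
\end{equation*}
with $V$ a common normalization.

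For this step, since both $R_n$ and $\hat R_n$ are centered Gaussian, I would use a Gaussian interpolation, equivalent to a continuous swap of variance coefficients: set $R_n^{(t)}(x)=\cos(t\pi/2)R_n(x)+\sin(t\pi/2)\hat R_n(x)$ with independent copies of $R_n$ and $\hat R_n$. Gaussian integration by parts (Price's theorem) yields
$$\E L(Z_{\hat R})-\E L(Z_R)=\frac{\pi}{4}\int_0^1\sin(t\pi)\,\E\bigl\langle \Sigma_{\hat R}-\Sigma_R,\,\nabla^2 L(Z_t)\bigr\rangle\,dt,$$
reducing the problem to a bilinear pairing of the normalized covariance-kernel difference
$$(\Sigma_R-\Sigma_{\hat R})_{(ik)(i'k')}=\sum_{j=0}^n\bigl((c_{n-j}/c_n)^2-1\bigr)(w_{ik}w_{i'k'})^j/\sqrt{V(w_{ik})V(w_{i'k'})}$$
against the Hessian of $L$. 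By condition \eqref{cond:ci:kac}, the summand is of size $\ll \exp(-(\log\log n)^{1+\varepsilon})$ in the middle range $\exp(\log^{1/5}n)\le j\le n\exp(-\log^{1/5}n)$; the small-index regime provides only $\exp(\log^{1/5}n)$ summands, and the large-index regime is killed by $|w_{ik}w_{i'k'}|^j\le \exp(-\exp(\log^{1/5}n))$ since $1-|w_{ik}w_{i'k'}|\ge b_n=\exp(2\log^{1/5}n)/n$.

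The main technical obstacle is converting this small per-entry covariance difference into a small bilinear pairing against $\nabla^2 L$: the latter carries the large factors $\delta_i^{-2\alpha}\delta_{i'}^{-2\alpha}$ from \eqref{eq:derivativebound} and lives in ambient dimension $\sum_i m_i$, which is a small positive power of $n$. A naive entry-wise Cauchy--Schwarz gives $n^{O(\alpha)}\exp(-(\log\log n)^{1+\varepsilon})$, which is not $\ll(\log n)^{-\beta}$. To recover the desired bound I would exploit two pieces of structure: the kernel difference admits the rank-one decomposition $\Sigma_R-\Sigma_{\hat R}=\sum_j\sigma_j u_ju_j^T$ with $\sigma_j=(c_{n-j}/c_n)^2-1$ and $u_j=(w_{ik}^j/\sqrt{V(w_{ik})})_{(ik)}$, while $L$ factors as $\hat F\circ A$ through a linear averaging map $A\colon\R^m\to\R^M$ with $M\ll\log n$, so $\nabla^2 L=A^T\nabla^2\hat F\,A$ has rank at most $M$ and $\|\nabla^2 L\|_{\mathrm{nuc}}\ll M\|A\|_{\mathrm{op}}^2\ll(\log n)\exp(-c\log^{1/5}n)$. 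Combining these via a trace-duality inequality of the form $|\operatorname{tr}(AB)|\le \|A\|_{\mathrm{op}}\|B\|_{\mathrm{nuc}}$, together with a careful accounting of how the $\alpha$-dependent polynomial-in-$n$ factors from the two structures cancel, should produce a final estimate $\ll \exp(-c'(\log\log n)^{1+\varepsilon})$, comfortably beating $(\log n)^{-\beta}$.
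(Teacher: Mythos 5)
Your overall architecture matches the paper's: reduce to the core region via the dyadic decomposition and the smoothing/sampling steps of Theorem \ref{thm:distribution_universality}, then replace the Lindeberg swap of random variables by a swap of the deterministic coefficients $c_{n-j}/c_n\mapsto 1$, using \eqref{cond:ci:kac} to control the change in covariance in the bulk range of $j$ and disposing of the two edge ranges exactly as you describe (few terms for small $j$; geometric decay of $|w|^{2j}$ for $j$ beyond $n\exp(-\log^{1/5}n)$). The paper does the coefficient swap discretely, one index at a time, with a second-order Taylor expansion (Lemma \ref{lm:log-universality-smooth:R}); your continuous Gaussian interpolation is an equivalent and arguably cleaner device in the all-Gaussian setting. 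Up to that point the proposal is sound.

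The gap is in the final step, where you convert the entrywise-small covariance difference into a small pairing with $\nabla^{2}L$. You correctly observe that with the original parameters $m_i=\delta_i^{-11\alpha}$ and derivative bounds $\delta_i^{-2\alpha}$ the naive entrywise estimate gives $n^{O(\alpha)}\exp(-(\log\log n)^{1+\varepsilon})$, which is useless. But your rescue via $\|\nabla^{2}L\|_{\mathrm{nuc}}\ll M\|A\|_{\mathrm{op}}^{2}$ rests on the claim that $L=\hat F\circ A$ with $A$ linear, and that is not the function to which the interpolation must be applied. The smooth test functions $L$ in Lemma \ref{lm:log-universality-smooth} and its analog here are produced in Appendix \ref{app:log_universality} by composing $K=\hat F\circ A$ with $\log|\cdot|$ in each coordinate and multiplying by a cutoff $\psi$ that removes the singularity of the logarithm near $z_{ik}=0$. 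Both operations destroy the rank-$M$ structure: the chain rule produces diagonal Hessian contributions of the form $\partial_{ik}K\cdot\nabla^{2}\bigl(\log|z_{ik}|\bigr)$ and the cutoff produces terms like $\bar K\cdot\psi_{ik}''$, each living on all $\sum_i m_i$ coordinates, so $\|\nabla^{2}L\|_{\mathrm{nuc}}$ is of order $\sum_i m_i$ times the derivative bounds rather than $M\|A\|_{\mathrm{op}}^{2}$, and trace duality does not beat the naive bound. The paper's actual resolution is simpler: since the target error is only $(\log n)^{-\beta}$ and the $\xi_i$ are Gaussian, the whole construction is rerun with polylogarithmic parameters --- smoothing at scale $\delta_i(\log n)^{-\gamma}$ (justified by the Gaussian repulsion estimate, Lemma \ref{lm:nonrepulsion}), $m_i=(\log n)^{\beta}$ sample points, and derivative bounds $(\log n)^{\beta'}$ in \eqref{eq:derivativebound} --- after which the entrywise bound $(\log n)^{O(1)}\exp(-(\log\log n)^{1+\varepsilon})\ll(\log n)^{-\beta}$ suffices, because $\exp(-(\log\log n)^{1+\varepsilon})$ decays faster than any power of $\log n$. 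You should either adopt that parameter change or supply a valid structural argument; as written, the trace-duality step fails.
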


\begin{proposition}\label{prop:Rmoment}
	Assume that the $\xi_i$ are iid standard Gaussian. Let $\beta>0$ be any constant. There exists a constant $C>0$ such that for every $n$, we have
	$$\left |\E \left (N_{R_n}^{k}\left (\mj\cap [-1, 1]\right )\right ) - \E \left (N_{\hat R_n}^{k}\left (\mj\cap [-1, 1]\right )\right )\right |\le C(\log n)^{-\beta}$$
for $k=1, 2$. In particular, 
	$$\left |\Var \left (N_{R_n} \left (\mj\cap [-1, 1]\right )\right ) -\Var \left (N_{\hat R_n} \left (\mj\cap [-1, 1]\right )\right )\right |\le C(\log n)^{-\beta}.$$
\end{proposition}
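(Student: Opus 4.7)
The plan is to deduce Proposition \ref{prop:Rmoment} from Proposition \ref{prop:Rdistribution} using the same truncation-and-smoothing scheme that produced Corollary \ref{cor:moment_universality} from Theorem \ref{thm:distribution_universality} in Section \ref{sec:cor:moment_universality}; indeed Proposition \ref{prop:Rdistribution} is the exact analogue for $R_n$ and $\hat R_n$ of Theorem \ref{thm:distribution_universality}, so nothing beyond careful bookkeeping is needed. Write $N := N_{R_n}(\mj \cap [-1,1])$ and $\tilde N := N_{\hat R_n}(\mj \cap [-1,1])$, and let $L := \log^4 n$. Introduce the truncation events $\mathcal A := \{N \le L\}$ and $\tilde{\mathcal A} := \{\tilde N \le L\}$, and pick a smooth $F \colon \R \to \R$ supported on $[-1, L+1]$ that equals $x^k$ on $[0,L]$ and has derivatives of order $\le 3$ bounded by $O(L^k) = O(\log^{4k} n)$. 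Because $N$ and $\tilde N$ are non-negative integers, $F(N) = N^k \mathbf 1_{\mathcal A}$ and $F(\tilde N) = \tilde N^k \mathbf 1_{\tilde{\mathcal A}}$, so applying Proposition \ref{prop:Rdistribution} with exponent $\beta' := \beta + 4k + 1$ (legal since the $\beta$ in that proposition is arbitrary) to the normalised function $F/\|F\|_{C^3}$ and scaling back yields
$$\bigl| \E[N^k \mathbf 1_{\mathcal A}] - \E[\tilde N^k \mathbf 1_{\tilde{\mathcal A}}] \bigr| \ll \log^{4k} n \cdot (\log n)^{-\beta'} \ll (\log n)^{-\beta - 1}.$$

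Next I would argue that $\E[N^k \mathbf 1_{\mathcal A^c}]$ and $\E[\tilde N^k \mathbf 1_{\tilde{\mathcal A}^c}]$ are themselves of order $(\log n)^{-\beta-1}$. Split $\mj \cap [0,1)$ into $M = \Theta(\log n)$ dyadic pieces $[1-\delta_{i-1}, 1-\delta_i)$ with $\delta_i = a_n/2^i$ running down to $\delta_M = \max\{1/n, b_n\}$, and do the same for $\mj \cap (-1,0]$. Let $\mathcal A_i$ be the event that $R_n$ has at most $C \log(1/\delta_i)$ roots in the $i$-th piece; by \eqref{eq:tail} of Lemma \ref{lm:bddness} applied to $R_n$, $\P(\mathcal A_i^c) \ll \delta_i^A$ for any fixed $A$. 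Since $\bigcap_i \mathcal A_i \subset \mathcal A$ for large $n$, a union bound gives $\P(\mathcal A^c) \ll a_n^A = \exp(-2A \log^{1/5} n)$, which decays faster than any negative power of $\log n$. Combined with the moment bound $\E N^{2k} \ll \log^{O(1)} n$ (from \eqref{eq:secondmoment} of Lemma \ref{lm:bddness} and Minkowski's inequality across the pieces) and Cauchy--Schwarz, this yields $\E[N^k \mathbf 1_{\mathcal A^c}] \ll (\log n)^{-\beta-1}$; the analogous bound for $\tilde N$ is the Kac-polynomial case of the same argument. Adding the three pieces proves the first assertion of Proposition \ref{prop:Rmoment}.

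The variance statement is then routine: using $\Var N - \Var \tilde N = (\E N^2 - \E \tilde N^2) - (\E N - \E \tilde N)(\E N + \E \tilde N)$, the $k=1$ and $k=2$ cases together with $\E N, \E \tilde N \ll \log n$ give $|\Var N - \Var \tilde N| \ll (\log n)^{-\beta} \cdot \log n$, and absorbing the extra factor of $\log n$ into $\beta$ (again legal because $\beta$ is arbitrary) produces the claimed bound. The one non-cosmetic obstacle is justifying that Lemma \ref{lm:bddness} genuinely applies to $R_n$, whose coefficients $c_{n-i}/c_n$ satisfy Condition \eqref{cond_ci_rho} with $\rho = 0$ only for $i$ bounded away from $n$; this is already flagged as routine in Remark \ref{rmk:reduction:01}, and reduces to rerunning the anti-concentration Lemma \ref{lmanti_concentration} on the coefficients of $R_n$ while conditioning on the $O(1)$ exceptional edge coefficients.
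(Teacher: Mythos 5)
Your proposal is correct and follows essentially the same route as the paper, which simply notes that Proposition \ref{prop:Rmoment} follows from Proposition \ref{prop:Rdistribution} by the same truncation-and-smoothing argument used to derive Corollary \ref{cor:moment_universality} from Theorem \ref{thm:distribution_universality}. Your write-up fills in exactly those details (the cutoff at $\log^4 n$, the rescaled test function, the tail bound from Lemma \ref{lm:bddness}, and the absorption of polylog factors into the arbitrary $\beta$), so no further comment is needed.
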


Proposition \ref{prop:Rdistribution} implies Proposition \ref{prop:Rmoment}, using  the same arguments as in the proof of Corollary \ref{cor:moment_universality}. \begin{proof}[Proof of  Proposition \ref{prop:Rdistribution}] We use the same arguments as in the proof of Theorem \ref{thm:distribution_universality} with the following modifications. First, $P_n$ is replaced by $R_n$ and $\tilde P_n$ is replaced by $\hat R_n$, and all of the $\delta^{\alpha}$ in the former for a small constant $\alpha$ with be replaced by $(\log n)^{-\beta'}$ for a large constant $\beta'$. For example, Lemma \ref{lm:repulsion} is replaced by the following variant that can be proved using the same argument.
	\begin{lemma}\label{lm:nonrepulsion} 
		Assume that the $\xi_i$ are iid standard Gaussian. Let $\delta\in [b_n, a_n]$. For any constant $\gamma>0$ and $x\in \R$ with $|x|\in  [1-\delta -\delta (\log n)^{-\gamma}, 1-\delta/2 +\delta  (\log n)^{-\gamma}]$, we have
		$$\P \left ( N_{R_n} B\left (x, \delta (\log n)^{-\gamma}\right )\ge 2\right )\ll (\log n)^{-3 \gamma/2}.$$
	\end{lemma}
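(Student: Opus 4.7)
The plan is to adapt the proof of Lemma \ref{lm:repulsion} (i.e.\ Lemma 5.1 of \cite{DOV}) to the reversed polynomial $R_n(z)=\sum_{i=0}^{n}(c_{n-i}/c_n)\xi_{n-i}z^{i}$ in the Gaussian setting. The point is that for $|x|\in[1-\delta-\delta(\log n)^{-\gamma},1-\delta/2+\delta(\log n)^{-\gamma}]$ with $\delta\in[b_n,a_n]$, only the indices $i\lesssim\delta^{-1}$ contribute meaningfully to $R_n(x)$, and on this range $n-i$ is close to $n$, so by Condition \eqref{cond_ci_rho} the ratios $c_{n-i}/c_n$ are uniformly bounded above and below by positive constants. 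In effect, on the relevant scale $R_n$ behaves like a Kac polynomial (effective exponent $\rho_{\text{eff}}=0$), and the negligible contribution of the first $N_0$ coefficients is handled exactly as indicated in Remark \ref{rmk:reduction:01}.

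First I would reduce the event $\{N_{R_n}(B(x,r))\ge 2\}$, with $r:=\delta(\log n)^{-\gamma}$, to joint smallness of $R_n(x)$ and $R_n'(x)$. If $R_n$ has two zeros $\zeta_1,\zeta_2\in B(x,r)$, factoring $R_n(z)=(z-\zeta_1)(z-\zeta_2)S(z)$ and expanding at $x$ gives
$$|R_n(x)|\le r^{2}M_S,\qquad |R_n'(x)|\le 2rM_S+r^{2}M_{S'},$$
with $M_S:=\sup_{B(x,2r)}|S|$ and $M_{S'}:=\sup_{B(x,2r)}|S'|$. By the Jensen-type argument used in the proof of Lemma \ref{lm:bddness}, both $M_S$ and $\delta\cdot M_{S'}$ are controlled by $\delta^{-2}\sup_{B(x,\delta/10)}|R_n|$, and Markov's inequality applied as in \eqref{eq:bddness:2} to $\sum|c_{n-i}/c_n||\xi_{n-i}||x|^{i}$ shows that this supremum is at most $\delta^{-1/2}(\log n)^{K}$ outside an event of probability $\ll(\log n)^{-A}$, for any preselected constants $A,K$.

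Next I would apply two-dimensional Gaussian anti-concentration to the vector $(R_n(x),R_n'(x))$. A direct Kac-type computation in the spirit of \eqref{boundvariance} shows
\begin{equation*}
\Var R_n(x)\asymp\delta^{-1},\qquad \Var R_n'(x)\asymp\delta^{-3},\qquad \E R_n(x)R_n'(x)\asymp\delta^{-2},
\end{equation*}
and the determinant of the covariance matrix is $\Theta(\delta^{-4})$, so the correlation coefficient $\rho$ stays bounded away from $\pm 1$. Consequently, the joint density of $(R_n(x),R_n'(x))$ is uniformly bounded and
$$\P\!\left(|R_n(x)|\le r^{2}M,\ |R_n'(x)|\le rM\right)\ll \frac{r^{3}M^{2}}{\sqrt{\Var R_n(x)\cdot\Var R_n'(x)\cdot(1-\rho^{2})}}\ll r^{3}\delta^{-1}M^{2}.$$
Plugging in $r=\delta(\log n)^{-\gamma}$ and $M=O(\delta^{-1/2}(\log n)^{K})$ and combining with the bad-event bound yields probability $\ll(\log n)^{-3\gamma+o(\gamma)}$, which beats the required $(\log n)^{-3\gamma/2}$ for large $n$.

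The main obstacle is the non-degeneracy estimate $\det\Cov(R_n(x),R_n'(x))\gg\delta^{-4}$ uniformly in $x$: the individual variance asymptotics are routine (analogous to the computations behind \eqref{boundvariance}), but showing that $R_n(x)$ and $R_n'(x)$ are genuinely uncorrelated after normalization requires tracking $1-\rho^{2}$ through the expansion $\sum_i(c_{n-i}/c_n)^{2}i|x|^{2i-1}$ against the geometric mean $\sqrt{\Var R_n(x)\Var R_n'(x)}$, and verifying that the resulting expression is bounded below by a positive constant throughout the prescribed range of $|x|$. Once this algebraic step is secured, the rest of the argument is a near-mechanical transcription of the steps used for $P_n$ in Lemma \ref{lm:repulsion}.
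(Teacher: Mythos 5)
Your overall strategy is the intended one: the paper gives no separate proof of this lemma, saying only that it follows by ``the same argument'' as Lemma \ref{lm:repulsion} ([DOV, Lemma 5.1]), and your outline --- factor out the two roots, reduce to joint smallness of $(R_n(x),R_n'(x))$, control the cofactor by a supremum of $|R_n|$ on a ball of radius $\Theta(\delta)$, and finish with two--dimensional Gaussian anti-concentration using $\det\Cov(R_n(x),R_n'(x))\gg\delta^{-4}$ --- is exactly that argument, correctly transported to $R_n$ via the observation that $c_{n-i}/c_n=\Theta(1)$ for $i\ll n$. The non-degeneracy step you flag is real but fine: writing $\mu$ for the probability measure on $\{0,\dots,n\}$ proportional to $(c_{n-i}/c_n)^2|x|^{2i}$, one has $1-\rho^2=\Var_\mu(i)/\E_\mu(i^2)$, and since the weights are bounded above and below on the range $i\le\delta^{-1}\log^2 n$, $\mu$ puts mass $\Omega(1)$ on each of $[0,\delta^{-1}]$ and $[2\delta^{-1},3\delta^{-1}]$, giving $\Var_\mu(i)\gg\delta^{-2}\asymp\E_\mu(i^2)$.

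There is, however, a genuine quantitative gap in your Step 2, and it propagates into your final exponent. Markov's inequality applied to $\sum_i|c_{n-i}/c_n|\,|\xi_{n-i}|\,|x|^i$ controls only the \emph{first moment} of the supremum, which is $O(\delta^{-1})$, not $O(\delta^{-1/2})$; it therefore yields $\sup_{B(x,\delta/10)}|R_n|\le\delta^{-1}(\log n)^{K}$ with failure probability $(\log n)^{-K}$, and with that threshold your main term becomes $r^3\,(\delta^{-1}M)^2/\sqrt{\det\Cov}\asymp(\log n)^{-3\gamma+2K}\delta^{-1}$, which is useless since $\delta^{-1}$ can be as large as $a_nn$. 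To get the threshold $\delta^{-1/2}(\log n)^{K}$ you actually need a Gaussian-process bound (Borell--TIS/Dudley gives $\E\sup_{B(x,c\delta)}|R_n|\ll\delta^{-1/2}$ and sub-Gaussian tails at scale $\delta^{-1/2}$), or, closer to the paper's own style, you should bypass the supremum entirely and bound the relevant derivative integrals in moment norm exactly as in the proof of Lemma \ref{multipleroot} (Markov plus H\"older plus Gaussian hypercontractivity applied to $P_n^{(k)}$). Moreover, once this is done correctly you cannot choose $A$ and $K$ independently and large: the sub-Gaussian tail forces the trade-off $\P(\sup>\delta^{-1/2}(\log n)^{K})\le\exp(-c(\log n)^{2K})$ against the main term $(\log n)^{2K-3\gamma}$, and optimizing gives $K=3\gamma/4$ and the bound $(\log n)^{-3\gamma/2}$ \emph{exactly} --- not the improved $(\log n)^{-3\gamma+o(\gamma)}$ you claim. (Your displayed anti-concentration inequality also has a stray $\delta^{-1}$ where $\delta^{2}$ should appear after dividing by $\sqrt{\Var R_n(x)\Var R_n'(x)}\asymp\delta^{-2}$, but that is a bookkeeping slip rather than a conceptual one.)
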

	%
	%
	The only remaining difference compared to the proof of Theorem \ref{thm:distribution_universality} is in the proof of the analog of Lemma \ref{lm:log-universality-smooth}, namely for $\delta_0 = a_n, \delta_1 = a_n/2, \dots, \delta_{M-1}  = a_n/2^{M-1}$ and $\delta_M := \max\{1/n, b_n\}$ ($M$ is the largest integer such that $\delta_{M-1}> \max\{1/n, b_n\}$), and for $m_i = (\log n)^{\beta}$,
	\begin{lemma} \label{lm:log-universality-smooth:R}
		Assume that the $\xi_i$ are iid standard Gaussian. Let $\beta'$ be any positive constant. Let $L : \mathbb{C}^{m_{1}+\cdots+m_{M}}\rightarrow \mathbb{R}$ be a smooth function with all derivatives up to order 3 being bounded by $(\log n)^{\beta'}$. 
		Then for every $w_{ik}$  in $B(1-3\delta_{i}/2,2\delta_{i}/3)$, we have
		\begin{equation}\label{eq:lm:log-universality-smooth:R}
		\left|\E L\left (\frac{R_n(w_{ik})}{\sqrt{V(w_{ik})}}\right )_{\substack{i=1,\ldots,M\\k=1,\dots,m_{i}}}-\E L\left (\frac{\hat{R}_n(w_{ik})}{\sqrt{V(w_{ik})}}\right )_{\substack{i=1,\ldots,M\\k=1,\dots,m_{i}}}\right|\ll (\log n)^{-\beta'},
		\end{equation}
		where $V(w) := \Var R_n(w)$.
	\end{lemma}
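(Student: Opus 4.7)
The plan is to adapt the Lindeberg-style swapping argument from the proof of Lemma \ref{lm:log-universality-smooth}, but with a key twist: rather than swapping random variables (which are already standard Gaussian on both sides), we swap the \emph{deterministic} coefficients $c_{n-i}/c_n$ of $R_n$ with the Kac coefficients $1$ of $\hat R_n$ one index at a time. This is made possible by the fact that $R_n$ and $\hat R_n$ are built from the same standard Gaussian variables $\xi_0,\ldots,\xi_n$ and differ only through their prefactors.

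Define the interpolant
$$R^{(i_0)}(x) := \sum_{i=0}^{i_0-1} \xi_{n-i} x^i + \sum_{i=i_0}^{n} \frac{c_{n-i}}{c_n} \xi_{n-i} x^i,$$
so that $R^{(0)}=R_n$ and $R^{(n+1)}=\hat R_n$; the left-hand side of \eqref{eq:lm:log-universality-smooth:R} is then at most $\sum_{i_0=0}^{n} I_{i_0}$ for the corresponding one-step swap errors $I_{i_0}$. Splitting off the $\xi_{n-i_0}$ dependence, write $R^{(i_0)}(w_{ik})/\sqrt{V(w_{ik})} = Y_{ik} + a_{ik,i_0}\xi_{n-i_0}$ and $R^{(i_0+1)}(w_{ik})/\sqrt{V(w_{ik})} = Y_{ik} + \tilde a_{ik,i_0}\xi_{n-i_0}$, with $a_{ik,i_0} = (c_{n-i_0}/c_n)\,w_{ik}^{i_0}/\sqrt{V(w_{ik})}$ and $\tilde a_{ik,i_0} = w_{ik}^{i_0}/\sqrt{V(w_{ik})}$. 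Taylor expanding $L$ about $(Y_{ik})$ to third order and taking expectation over $\xi_{n-i_0}$, the zeroth and first order contributions drop (first-order because $\E\xi_{n-i_0}=0$). Unlike the classical Lindeberg swap, the second-order term does \emph{not} cancel because the variances $a_{ik,i_0}$ and $\tilde a_{ik,i_0}$ disagree; it contributes a factor bounded by $\|L''\|_\infty\bigl|(c_{n-i_0}/c_n)^2-1\bigr|\bigl(\sum_{ik}|w_{ik}|^{i_0}/\sqrt{V(w_{ik})}\bigr)^2$. The third-order remainder is bounded exactly as in the proof of Lemma \ref{lm:log-universality-smooth}, using $\E|\xi_{n-i_0}|^3<\infty$ together with the delocalization estimate $|a_{ik,i_0}|,|\tilde a_{ik,i_0}|\ll (\log n)^{-\gamma}$ (for some large $\gamma$) coming from $V(w_{ik})=\Theta(1/\delta_i)$ and $\delta_i\le a_n = \exp(-2\log^{1/5}n)$.

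Summing over $i_0$ proceeds by cases. For $i_0$ in the main range $[\exp(\log^{1/5}n),\,n\exp(-\log^{1/5}n)]$, hypothesis \eqref{cond:ci:kac} gives $|(c_{n-i_0}/c_n)^2-1|=O\bigl(\exp(-(\log\log n)^{1+\varepsilon})\bigr)$, crushing the second-order contribution even after absorbing the $\|L''\|_\infty\le(\log n)^{\beta'}$ factor and summing over at most $n$ indices. For indices outside the main range we exploit exponential decay of $|w_{ik}|^{i_0}$: the small indices $i_0<\exp(\log^{1/5}n)$ are few in number, and the large indices $i_0>n\exp(-\log^{1/5}n)$ satisfy $i_0\delta_i\ge i_0 b_n\ge \exp(\log^{1/5}n)$ by the choice $b_n=1/(a_n n)$ fixed in \eqref{choice:an:bn:var}, so that $|w_{ik}|^{i_0}\le(1-5\delta_i/6)^{i_0}$ is super-exponentially small and kills the contribution despite $|c_{n-i_0}/c_n|$ being only polynomially bounded by Condition \eqref{cond_ci_rho}. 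Combining these estimates yields $\sum_{i_0}I_{i_0}\ll(\log n)^{-\beta'}$. The main obstacle is the second-order term, which has no analogue in the classical Lindeberg swap: the argument hinges on the quantitative closeness of coefficients in \eqref{cond:ci:kac} throughout the bulk, dovetailed with the super-exponential suppression $|w_{ik}|^{i_0}$ in the tails that the specific calibration of $a_n$ and $b_n$ was engineered to provide.
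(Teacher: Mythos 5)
Your proposal follows essentially the same route as the paper: the same interpolating polynomials $R^{(i_0)}$ that swap the deterministic coefficients $c_{n-i_0}/c_n$ to $1$ one at a time, the same observation that the second-order Taylor term survives and is controlled by $|(c_{n-i_0}/c_n)^2-1|$, and the same three-range split of the sum over $i_0$ (the bulk handled via \eqref{cond:ci:kac}, the small indices by counting, the large indices by the super-exponential decay of $|w_{ik}|^{i_0}$ guaranteed by the calibration of $b_n$). The argument is correct and matches the paper's proof in all essentials.
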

	
	Assuming this lemma, the rest of the proof of Theorem \ref{thm:distribution_universality} can be adapted in a straightforward manner to complete the proof of Proposition \ref{prop:Rdistribution}.
\end{proof}

\begin{proof}[Proof of Lemma \ref{lm:log-universality-smooth:R}]	While for Lemma \ref{lm:log-universality-smooth}, going from $P_n$ to $\tilde P_n$, we need to swap the general random variables $\xi_i$ to the Gaussian ones $\tilde \xi_i$, here, going from $R_n$ to $\hat R_n$, we need to swap the coefficients $\frac{c_{n-i}}{c_n}$ to $1$ and keep the Gaussian random variables $\xi_i$ intact. Keeping that in mind, we set for each $0\le i_0\le n+1$,
	$$R_{i_0}(z) := \sum_{i=0}^{i_0-1} \xi_{n-i} z^{i} + \sum_{i=i_0}^{n} \frac{c_{n-i}}{c_n} \xi_{n-i} z^{i}.$$
	We have $R_{0} = R_n$, $R_{n+1} = \hat R_n$ and $R_{i_0+1}$ is obtained from $R_{i_0}$ by replacing the coefficient $\frac{c_{n-i_0}}{c_n}$ by 1.
	
	The difference $d_{i_0}$ in \eqref{def_di0} for $0\le i_0\le n$ now becomes 
	\begin{equation}
	d_{i_{0}}:=\left|\E _{\xi_{n-i_{0}}}\hat{L}\left(\frac{c_{n-i_{0}}\xi_{n-i_{0}}w_{ik}^{i_{0}}}{c_n\sqrt{V(w_{ik})}}\right)_{ik}-\E _{\xi_{n-i_{0}}}\hat{L}\left(\frac{{\xi}_{n-i_{0}}w_{ik}^{i_{0}}}{\sqrt{V(w_{ik})}}\right)_{ik}\right|\label{new_def_di0}
	\end{equation}
	where $\hat L$ is obtained from $L$ by translation and thus has all derivatives up to order 3 bounded by $(\log n)^{\beta'}$. The task is to show that 
	\begin{equation}\label{eq:universality:R:d}
	\sum_{i_0=0}^{n+1} \E _{\xi_0, \dots, \xi_n} d_{i_0} \ll (\log n)^{-\beta'}.
	\end{equation}
	By the Taylor expansion of order 2, we get
	\begin{equation} \label{eq:Taylor:R}
	\hat{L}\left(\frac{c_{n-i_{0}}\xi_{n-i_{0}}w_{ik}^{i_{0}}}{c_n\sqrt{V(w_{ik})}}\right)_{ik}=\hat{L}(0)+\hat{L}_{1}+\frac{1}{2}\hat{L}_{2}+\err_{2},
	\end{equation}
	where 
	\begin{eqnarray}
	\hat{L}_{1}&:=&\left.\frac{\d\hat{L}\left(\frac{c_{n-i_{0}}\xi_{n-i_{0}}w_{ik}^{i_{0}}}{c_n\sqrt{V(w_{ik})}}t\right)_{ik}}{\d t}\right|_{t=0}\nonumber\\
	&&=\sum_{ik}\frac{\partial\hat{L}(0)}{\partial\Re(z_{ik})}\Re\left (\frac{c_{n-i_{0}}\xi_{n-i_{0}}w_{ik}^{i_{0}}}{c_n\sqrt{V(w_{ik})}}\right )+\sum_{ik}\frac{\partial\hat{L}(0)}{\partial \Im(z_{ik})}
	\Im\left (\frac{c_{n-i_{0}}\xi_{n-i_{0}}w_{ik}^{i_{0}}}{c_n\sqrt{V(w_{ik})}}\right ),\nonumber\\
	\newline
	\hat{L}_{2}&:=&\left.\frac{\d ^{2}\hat{L}\left (\frac{c_{n-i_{0}}\xi_{n-i_{0}}w_{ik}^{i_{0}}}{c_n\sqrt{V(w_{ik})}}t\right )_{ik}}{\d t^{2}}\right|_{t=0},\nonumber
	\end{eqnarray}
	
	and 
	\begin{equation}
	| \err_{2}| \ll (\log n)^{3\beta'}\frac{c^{3}_{n-i_0}}{c_n^{3}}|\xi_{n-i_{0}}|^{3}\left(\sum_{ik}|w_{ik}|^{i_0}\delta_{i}^{1/2}\right)^{3}.\nonumber
	\end{equation}
	where we used  \eqref{cond:ci:kac} to get that
	$$V(w_{ik}) = \sum_{j=0}^{n} \frac{c_{n-j}^{2}}{c_n^{2}} |w_{ik}|^{2j}\gg \sum_{j=\delta_i^{-1}/4}^{\delta_i^{-1}/2} |w_{ik}|^{2i} \gg \delta_i^{-1}.$$
	Similarly, we get the expansion for $\hat{L}\left(\frac{ \xi_{n-i_{0}}w_{ik}^{i_{0}}}{\sqrt{V(w_{ik})}}\right)_{ik}$. Subtracting the two expansions and taking expectation both sides (noting again that all of the $\xi_i$ are iid standard Gaussian and in particular, have mean 0 and variance 1), we obtain
	\begin{equation}\label{eq:di0:R}
	\begin{split}
	&(\log n)^{-3\beta'} d_{i_0}\ll \left |\frac{c^{2}_{n-i_0}}{c_n^{2}}-1\right |\left (\sum_{ik}  |w_{ik}|^{i_0}\delta_i^{1/2}\right )^{2} +  \left (\frac{c^{3}_{n-i_0}}{c_n^{3}}+1 \right )\left (\sum_{ik} |w_{ik}|^{i_0}\delta_i^{1/2}\right )^{3}\\
	&\ll   (\log n)^{O(\beta')}\left |\frac{c^{2}_{n-i_0}}{c_n^{2}}-1\right | \sum _{i=1}^{M} \delta_i (1-\delta_i/2)^{2i_0} +  (\log n)^{O(\beta')}\left (\frac{c^{3}_{n-i_0}}{c_n^{3}}+1 \right )  \sum _{i=1}^{M} \delta_i^{3/2} (1-\delta_i/2)^{3i_0}
	\end{split}
	\end{equation}
	where in the last inequality, we used $|w_{ik}|\le 1 - \delta_i/2$, Cauchy-Schwartz inequality and the fact that $M \ll \log n$.
	Note that for each $i$,
	\begin{eqnarray}
	\sum_{i_0=0}^{n} \frac{c^{3}_{n-i_0}}{c_n^{3}} (1-\delta_i/2)^{3i_0}&\le & \sum_{i_0=0}^{n/2} \frac{c^{3}_{n-i_0}}{c_n^{3}} (1-\delta_i/2)^{2i_0}+ \sum_{i_0=n/2}^{n} \frac{c^{3}_{n-i_0}}{c_n^{3}} (1-\delta_i/2)^{2i_0} \nonumber\\
	&\ll& \sum_{i_0=0}^{n/2}  (1-\delta_i/2)^{2i_0}  +  n^{O(1)}(1-\delta_i/2)^{n} \ll \delta_i^{-1}\nonumber
	\end{eqnarray}
	where in the second to last inequality, we used Condition \eqref{cond_ci_rho} and in the last inequality, we used $n^{O(1)} (1-\delta_i/2)^{n}\le n^{O(1)} (1-b_n/2)^{n}\le n^{O(1)}e^{-b_n n/2}\ll 1$ by the choice of $b_n$ in \eqref{choice:an:bn:var}.
	
	Thus, plugging this into \eqref{eq:di0:R} and using $\sum_{i=1}^{M} \delta_i^{1/2}\ll a_n^{1/2} \ll (\log n)^{-C}$ for any constant $C$,  
	\begin{eqnarray}
	(\log n)^{-3\beta'} \sum_{i_0=0}^{n} d_{i_0} &\ll&  (\log n)^{O(\beta')} \sum_{i=1}^{M} \sum_{i_0=0}^{n} \left |\frac{c^{2}_{n-i_0}}{c_n^{2}}-1\right |  \delta_i (1-\delta_i/2)^{2i_0} +  (\log n)^{O(\beta')} \sum_{i=1}^{M} \delta_i^{1/2}\nonumber.
	\end{eqnarray}
	Let 
	$$I_0 := a_n^{-1/2} = \exp\left (\log^{1/5}n\right )\quad \text{and}\quad I_1 := \frac{(\log n)^{2}}{b_n}\le n \exp\left (-\log^{1/5}n\right ).$$
	Splitting the double sum 
	$$ \sum_{i=1}^{M} \sum_{i_0=0}^{n} \left |\frac{c^{2}_{n-i_0}}{c_n^{2}}-1\right |  \delta_i (1-\delta_i/2)^{2i_0}$$ 
	into $\sum_{i=1}^{M}\sum_{i_0=I_0}^{I_1} $, $\sum_{i=1}^{M}\sum_{i_0=0}^{I_0-1}$, and $\sum_{i=1}^{M}\sum_{i_0=I_1+1}^{n}$ and denoting the corresponding sums by $S_1, S_2, S_3$, we obtain 
	\begin{eqnarray}
	(\log n)^{-3\beta'} \sum_{i_0=0}^{n} d_{i_0} &\ll&  (\log n)^{O(\beta')} (S_1+ S_2+S_3)+ (\log n)^{-4\beta'}\nonumber.
	\end{eqnarray}

	By assumption \eqref{cond:ci:kac}, we have for every $i_0\in [ I_0, I_1]$, 
	$$\frac{c^{2}_{n-i_0}}{c_n^{2}} -1 \ll \exp\left (-(\log \log n)^{1+\ep}\right ) .$$
	
	Hence, 
	$$S_1 \ll\exp\left (-(\log \log n)^{1+\ep}\right ) \sum_{i=1}^{M}\sum_{i_0=0}^{n}   \delta_i (1-\delta_i/2)^{2i_0}\ll M\exp\left (-(\log \log n)^{1+\ep}\right ).$$
	
	For $S_2$, we observe that $\frac{c^{2}_{n-i_0}}{c_n^{2}} \ll 1$ for all $i_0\le I_0\le n/2$ by Condition \eqref{cond_ci_rho} and so
	$$S_2 \ll \sum_{i=1}^{M}\sum_{i_0=0}^{I_0-1} \delta_i  \ll I_0 a_n = a_n^{1/2}.$$
	
	For $S_3$, we observe that $\frac{c^{2}_{n-i_0}}{c_n^{2}} \ll n^{O(1)}$ for all $i_0$ by Condition \eqref{cond_ci_rho} and that for all $i_0\ge I_1$,
	$$(1-\delta_i/2)^{i_0}\ll (1 - b_n/2)^{I_1} \ll \exp\left (-b_n I_1/2\right ) = \exp(-\log^{2}n).$$
	And so, 
	$$S_3\ll n^{O(1)}\sum_{i=1}^{M}\sum_{i_0=I_1+1}^{n} \exp(-\log^{2}n)\ll  n^{O(1)}\exp(-\log^{2}n).$$
	Combining these bounds, we obtain 
	\begin{eqnarray}
	(\log n)^{-3\beta'} \sum_{i_0=0}^{n} d_{i_0} &\ll&  (\log n)^{-4\beta'}\nonumber
	\end{eqnarray}
	proving \eqref{eq:universality:R:d} and completing the proof of Lemma \ref{lm:log-universality-smooth:R}.
\end{proof}

\subsection{Proof of  \eqref{varR}}
	As shown in \cite{Mas1}, for the Kac polynomial $\hat R_n$ (recall that the random variables $\xi_i$ are iid standard Gaussian), $\Var\left (N_{\hat R_n} \left (-1, 1\right )\right )\gg \log n$.
	
	By Proposition \ref{prop:boundedge} for the Kac polynomial $\hat R_n$ and the choice of $a_n, b_n$ in \eqref{choice:an:bn:var},
	$$\E N^{2}_{\hat R_n} ([-1, 1]\setminus \mj) \ll \E N^{2}_{\hat R_n} (\R\setminus \mj) =o\left (\log n\right ).$$
	So, by the triangle inequality, 
	$$\sqrt{\Var \left (N_{\hat R_n}\left (\mj \cap [-1, 1]\right )\right )}\ge \sqrt{\Var \left (N_{\hat R_n}\left (-1, 1\right )\right )} -o(\sqrt{\log n})\gg \sqrt{\log n}.$$            
This together with Proposition \ref{prop:Rmoment} imply \eqref{varR}.
\qed

\subsection{Proof of \eqref{covPR}} By a classical  formula \cite[Theorem 1]{Kac2}, we have that for every $a<b$ and for every nonzero polynomial $f$, 
\begin{eqnarray}
N_{f}(a, b)&=&\frac{1}{2\pi}\int _{\R}\int_{a}^{b} |f'(x)|\cos(sf(x))dxds\nonumber\\
&=&\frac{1}{2\pi^{2}}\int _{\R}\int_{a}^{b} \int_{\R}\frac{1}{u^{2}} (1-\cos(uf'(x)))\cos(sf(x))dudxds.\nonumber
\end{eqnarray}
%

We will apply this formula for both $P_n$ and $R_n$. To avoid the improper integrals, we need to cut off the domain of integration. Let $D := \exp\left (a_n^{-1}/100\right )$, $\gamma := D^{-3}$ and approximate $N_f(a, b)$ by 
$$N_{f}^{(1)}(a, b) := \frac{1}{2\pi}\int_{-D}^{D} \int_{a}^{b} |f'(x)|\cos(sf(x)) dxds,$$
and
$$N_{f}^{(2)}(a, b) := \frac{1}{2\pi^{2}}\int_{-D}^{D}\int_{a}^{b}\int_{\gamma}^{D^{2}} \frac{1}{u^{2}} \left (1- \cos(uf'(x))\right ) \cos(sf(x))dudxds.$$

We first  show that $N_f^{(1)}$ is a good approximation of $N_f$. We claim that for any $(a, b)\subset \mj\cap [-1, 1]$, 
\begin{equation}
\E\left |N_{P_n}(a, b) - N^{(1)}_{P_n}(a, b)\right |^{2} \ll \exp\left (-\Omega(a_n^{-1})\right )\label{eqn:diff1}.
\end{equation}
To show this, let $x_1<\dots<x_k$ be all the roots of $P_n'(x)$ in the interval $(a, b)$ and let $x_0 = a, x_{k+1} = b$. We have $k\le n$. Since $P'_n$ keeps the same sign on each interval $(x_i, x_{i+1})$, it holds that
\begin{eqnarray}
N_{P_n}(a, b) - N^{(1)}_{P_n}(a, b) &\le& \frac{1}{2\pi} \sum_{i=0}^{k}\left |\int_{|s|\ge D} \frac{\sin\left (sP_n(x_{i+1})\right )- \sin\left (sP_n(x_{i})\right )}{s}ds\right |\nonumber\\
&\ll & \sum_{i=0}^{k+1} \min\left \{1, \frac{1}{D|P_n(x_i)|}\right \} \nonumber
\end{eqnarray}
where we used \eqref{eq:sin:bound2}.
Thus, 
\begin{equation}\label{eq:N:N1}
\left (\E\left |N_{P_n}(a, b) - N^{(1)}_{P_n}(a, b)\right |^{2} \right )^{1/2} \ll \sum_{i=0}^{k+1} \left (\E \min\left \{1, \frac{1}{D|P_n(x_i)|}\right \}^{2}\right )^{1/2}.
\end{equation}

Divide the interval $(a, b)$ into $D^{1/2}$ equal intervals by the points $a=a_0<a_1<\dots<a_{D^{1/2}} = b$.

Let $p=1/4$ (or any small constant). For each $1\le i\le k$, assume that $x_i\in (a_j, a_{j+1}]$ for some $j$. If $|P_n(x_i)|\le D^{p-1}$ and $|P_n(a_{j+1})| \ge 2D^{p-1}$ then
$$\left |P_n(a_{j+1}) - P_n(x_i) \right |=\left |\int_{x_i}^{a_{j+1}} \int_{x_i}^{t} P_n''(u) dudt\right |\ge D^{p-1}$$
and so 
$$\int_{a_j}^{a_{j+1}} \int_{a_j}^{a_{j+1}} |P_n''(u)| dudt\ge D^{p-1}.$$
This happens with small probability
\begin{equation}\label{eq:corPR:multipleroot}
\P\left (\int_{a_j}^{a_{j+1}} \int_{a_j}^{a_{j+1}} |P_n''(u)| dudt\ge D^{p-1}\right )\ll D^{-1}.
\end{equation}
We defer the proof of \eqref{eq:corPR:multipleroot} to Appendix \ref{app:corPR:multipleroot} as it is similar to the proof of Lemma \ref{multipleroot}.
Using this and the union bound over all $D^{1/2}$ possible values of $j$, we get 
\begin{eqnarray}
P\left (|P_n(x_i)|\le D^{p-1}\right )&\le& \P\left (\exists j: |P_n(a_j)|\le 2D^{p-1}\right ) + D^{1/2} O\left (D^{-1}\right )\nonumber\\
&\ll & D^{1/2} D^{p-1}+ D^{-1/2}\ll D^{-1/4}\nonumber
\end{eqnarray}
where we used the fact that $P_n(a_j)$ is a Gaussian random variable with variance $\Omega(1)$.
%
Plugging this into \eqref{eq:N:N1} and using $k\le n, p=1/4$ yield
\begin{equation}
\left (\E\left |N_{P_n}(a, b) - N^{(1)}_{P_n}(a, b)\right |^{2} \right )^{1/2} \ll n. \left (D^{-1/4} +D^{-1/8}\right ) \ll \exp\left (-\Omega(a_n^{-1})\right ).\nonumber
\end{equation}
This proves \eqref{eqn:diff1} which means that $N^{(1)}_{P_n}(a, b)$ is a good approximation of $N_{P_n}(a, b)$.

Next, we show that for all $(a, b)\subset \mj\cap [-1, 1]$, $N^{(2)}_{P_n}(a, b)$ is also a good approximation of $N^{(1)}_{P_n}(a, b)$, namely,
\begin{equation}\label{eq:E:1:2}
\E\left |N^{(1)}_{P_n}(a, b) - N^{(2)}_{P_n}(a, b)\right |^{2} \ll \exp\left (-\Omega(a_n^{-1})\right ).
\end{equation}

To start, using the fact that $0\le 1-\cos x\le x^{2}$ for every real number $x$, we have
\begin{eqnarray}
\left |N^{(1)}_{P_n}(a, b) - N^{(2)}_{P_n}(a, b)\right | &\ll&   \int_{-D}^{D}\int_{a}^{b}\int_{0}^{\gamma} |P'_n(x)|^{2}dudxds+ \int_{-D}^{D}\int_{a}^{b}\int_{D^{2}}^{\infty} \frac{1}{u^{2}}dudxds\nonumber\\
&\ll& D^{-2} \int_{a}^{b} |P_n'(x)|^{2}dx +  D^{-1}.\nonumber
\end{eqnarray}
Taking the second moment of both sides, we get  
\begin{eqnarray*}
\E\left |N^{(1)}_{P_n}(a, b) - N^{(2)}_{P_n}(a, b)\right |^{2} &\ll& D^{-1}+ D^{-2} \int_{a}^{b} \E|P_n'(x)|^{4}dx\\
& \ll& D^{-1}+ D^{-2} n^{O(1)} \ll \exp\left (-\Omega(a_n^{-1})\right )\nonumber
\end{eqnarray*}
where we again used the fact that $a_n$ satisfies \eqref{cond-a-log}. This proves \eqref{eq:E:1:2}.

Combining this with \eqref{eqn:diff1}, we conclude that  for any $(a, b)\subset  \mj\cap [-1, 1]$, 
$$ \E\left |N_{P_n}(a, b) - N^{(2)}_{P_n}(a, b)\right |^{2} \ll \exp\left (-\Omega(a_n^{-1})\right ).$$

We can obtain a similar estimate for $R_n$. Therefore,  in order to prove \eqref{covPR}, it suffices to show 
\begin{equation} 
\Cov \left (N^{(2)}_{P_n}  \left(\mj \cap [-1, 1]\right ) , N^{(2)}_{R_n}  \left(\mj \cap [-1, 1]\right ) \right )= o(\log n) \label{covPR:2}.
\end{equation}
To prove this bound, we  need to make a critical use of a property of Gaussian variable. For a standard Gaussian random variable $Z$ and any real number $a$, $\E \cos(aZ) = E e^{iaZ} = e^{-a^{2}/2}$. 
Since $P_n(x), R_n (x)$ are Gaussian for any value of $x$, we have for $(a, b), (c, d)\subset \mj\cap [-1, 1]$,
\vspace{4mm} 
\begin{eqnarray}\label{eq:N2:cov}
&&\Cov \left (N^{(2)}_{P_n}  \left(a, b\right ) , N^{(2)}_{R_n}  \left(c, d\right ) \right )\nonumber\\
&&= \frac{1}{4\pi^{4}}\int_{a}^{b}\int_{c}^{d}\int_{\gamma}^{D^{2}}\int_{\gamma}^{D^{2}}\int_{-D}^{D}\int_{-D}^{D} \frac{1}{u^{2}v^{2}} (F_1+F_2+F_3+F_4) dtdsdvdudydx 
\end{eqnarray}
\vspace{4mm} 
where 
\begin{eqnarray}
F_1(x, y, u, v, s, t) &:=& \frac{1}{8} \sum \exp\left (-\frac{1}{2}\Var \left (sP_n(x)\pm uP_n'(x)\pm tR_n(y)\pm vR_n'(y)\right )\right )\nonumber\\
&-&  \frac{1}{4} \sum \exp\left (-\frac{1}{2}\Var \left (sP_n(x)\pm uP_n'(x)\right )-\frac{1}{2}\Var\left ( tR_n(y)\pm vR_n'(y)\right )\right )\nonumber
\end{eqnarray}
in which the sums are taken over all possible assignments of $+$ and $-$ signs in place of the $\pm$ and 
\begin{eqnarray}
F_2(x, y, u, v, s, t) &:=& -F_1(x, y, 0, v, s, t) \nonumber,\\
F_3(x, y, u, v, s, t) &:=& -F_1(x, y, u, 0, s, t) \nonumber,\\
F_4(x, y, u, v, s, t) &:=& F_1(x, y, 0, 0, s, t) \nonumber.
\end{eqnarray}

  These formulas follow directly from the definition of $N^{(2)}$; we provide the tedious derivation in Appendix \ref{app:N2:cov} for the reader's convenience.

 We now show that for $(a, b), (c, d)\subset \mj\cap [-1, 1]$ and for all $i=1, 2, 3, 4$, 
\begin{equation}\label{eq:6:int}
\int_{a}^{b}\int_{c}^{d}\int_{\gamma}^{D^{2}}\int_{\gamma}^{D^{2}}\int_{-D}^{D}\int_{-D}^{D} \frac{1}{u^{2}v^{2}} F_i dtdsdvdudydx = o(1).
\end{equation}
We will show it for $i=4$. The cases $i=1, 2, 3$ are completely similar. We have
\begin{equation}\label{eq:F4}
F_4(x, y, u, v, s, t) = \exp\left (-\frac{s^{2}}{2}\sum_{i=0}^{n} c_i^{2}x^{2i}\right )\exp\left (-\frac{t^{2}}{2}\sum_{i=0}^{n} \frac{c_i^{2}y^{2n-2i}}{c_n^{2}}\right )\left (\frac{e^{st\Delta}+e^{-st\Delta}}{2}-1\right )
\end{equation}
where
$$\Delta = \sum_{i=0}^{n} \frac{c_i^{2}}{c_n} x^{i}y^{n-i}.$$
Since $|x|, |y|\le 1-b_n$ and $nb_n \ge a_n^{-1}/2\gg C\log n$ for any constant $C$, we have
$$\Delta  \ll  n^{O(1)}\sum_{i=0}^{n}  (1-b_n)^{n}  \ll   n^{O(1)}\exp\left (-n b_n\right ) \ll \exp\left (-a_n^{-1}/4\right ).$$ 
Thus, bounding the first two exponents in \eqref{eq:F4} by 1, using $D = \exp\left (a_n^{-1}/100\right )$ and $|s|, |t|\le D$, we get that on the domain of integration in \eqref{eq:6:int},
$$F_4(x, y, u, v, s, t) \ll \exp\left (O(1) D^{2}\exp\left (-a_n^{-1}/4\right )\right )-1\ll D^{2}\exp\left (-a_n^{-1}/4\right )\ll \exp\left (-a_n^{-1}/5\right ).$$
Finally, using $\gamma = D^{-3}$, we have
\begin{equation} 
\int_{a}^{b}\int_{c}^{d}\int_{\gamma}^{D^{2}}\int_{\gamma}^{D^{2}}\int_{-D}^{D}\int_{-D}^{D} \frac{1}{u^{2}v^{2}} F_4 dtdsdvdudydx \ll D^{8}\exp\left (-a_n^{-1}/5\right ) = o(1)\nonumber
\end{equation}
proving \eqref{eq:6:int} and completing the proof of \eqref{covPR}.
\qed

{\bf Acknowledgements.} Oanh would like to thank Fran\c{c}ois Baccelli and Terence Tao for valuable conversations concerning random polynomials. We thank the anonymous referees for their helpful suggestions.

\bibliographystyle{plain}
\bibliography{polyref}

\section{Appendix}

\subsection{Proof of the Jensen's inequality \eqref{eq:Jensen}}\label{app:jensen}
By setting $g(w) = f\left (R(w+z)\right )$ and prove the corresponding inequality for $g$, it suffices to assume that $z = 0$ and $R=1$. Let $a_1, \dots, a_N$ be the zeros of $f$ in $\bar B(0, r)$. For each $a$ inside the unit disk $D$, consider the map 
$$T_a(w) = \frac{w-a}{\bar a w - 1}.$$

For $|a|\le r$ and $|w|\le r$, one can show by algebraic manipulation that 
$$|T_a(w)|\le \frac{2r}{1+r^{2}}<1.$$

Moreover, for all $|a|<1$ and $|w|=1$, we have
$$|T_a(w)| = |\bar w|\left |\frac{w-a}{\bar a w - 1}\right | = \left |\frac{1-a \bar w}{\bar a w - 1}\right |=1.$$

Let $h(w) = \frac{f(w)}{\prod_{k=1}^{N} T_{a_k}(w)}$. Then $h$ is an analytic function on $D$. By maximum principle, we have for every $w_0\in rD$,
\begin{eqnarray}
\frac{|f(w_0)|(1+r^{2})^{N}}{(2r)^{N}}\le \max_{w\in rD} |h(w)| \le \max_{w\in D} |h(w)| = \max_{w\in \partial D} |h(w)|=\max_{w\in \partial D} |f(w)|= M_1\nonumber.
\end{eqnarray}

Thus, $N\le \frac{\log \frac{M_1}{|f(w_0)|}}{\log\frac{1+r^{2}}{2r}}$ for all $w_0\in rD$, completing the proof.\qed

\subsection{Proof of \eqref{eq:S23}}\label{app:proof:S23}
We first reduce to the hyperbolic polynomials for which the Kac-Rice formula \eqref{eq:KacRice} is easier to handle. Consider the hyperbolic polynomial with coefficients $c_{j, \rho}:=\sqrt{\frac{(2\rho+1)\dots (2\rho+j)}{j!}}$, $0\le j\le n$. 

By condition \eqref{cond:ci:kac}, $c_{j, \rho} =\Theta (c_j)$ for all $j\ge N_0$. Using the Kac-Rice formula \eqref{eq:KacRice}, we have 
\begin{equation}\label{eq:KR1}
\E \left (\tilde N_n(S_2\cup S_3)\right )  \ll   \int_{S_2\cup S_3} \frac{\sqrt{\sum_{j=0}^{n}\sum_{k=j+1}^{n} c_{j, \rho}^{2} c_{k, \rho}^{2}(k-j)^{2} t^{2j+2k-2}}}{\sum_{j=0}^{n} c_{j, \rho}^{2}t^{2j}}dt.
\end{equation}   

We use \cite[Lemma 10.3]{DOV} with $h(k) = c_{k, \rho}^{2}$ which estimates the above integrand uniformly over the interval $\left (1-\frac{1}{C}, 1-\frac{C}{n}\right )$ for some sufficiently large constant $C$ and asserts that 
\begin{eqnarray}
\frac{\sqrt{\sum_{j=0}^{n}\sum_{k=j+1}^{n} c_{j, \rho}^{2} c_{k, \rho}^{2}(k-j)^{2} t^{2i+2k-2}}}{\sum_{j=0}^{n} c_{j, \rho}^{2}t^{2i}} &\ll & \frac{\sqrt{2\rho+1}}{2\pi(1-t)} +   (1-t)^{\rho-1/2} + \frac{1}{n(1-t)^{2}}\ll \frac{1}{1-t}\nonumber.		
\end{eqnarray}

This together with \eqref{eq:KR1} give \eqref{eq:S23} for $\delta_i\ge \frac{2C}{n}$ as in this case, $S_2\cup S_3\subset \left (1-\frac{1}{C}, 1-\frac{C}{n}\right )$.

If $\delta_i \le \frac{2C}{n}$, since $k-j\le n$, for all $t\in S_2\cup S_3$, we have
\begin{equation}\label{d2}
\frac{\sqrt{\sum_{j=0}^{n}\sum_{k=j+1}^{n} c_{j, \rho}^{2} c_{k, \rho}^{2}(k-j)^{2} t^{2i+2k-2}}}{\sum_{j=0}^{n} c_{j, \rho}^{2}t^{2i}} \ll n.
\end{equation}
Plugging this into \eqref{eq:KR1} and using the fact that $2C/n\ge \delta_i\ge \delta_M\ge 1/n$ give
$$\E \left (\tilde N_n(S_2\cup S_3)\right )  \ll n\delta_i^{1+\alpha}\ll n^{-\alpha}\ll \delta_i^{\alpha}$$
and hence \eqref{eq:S23} for $\delta_i \le \frac{2C}{n}$, completing the proof of \eqref{eq:S23} for all values of $\delta_i$.
\qed

\subsection{Proof of Lemma \ref{lm:log-universality-general}}\label{app:log_universality}
 In this section, we deduce Lemma \ref{lm:log-universality-general} from Lemma \ref{lm:log-universality-smooth}.

The constant $\alpha_{0}$ in this proof will be a small fraction of the $\alpha_{0}$ in Lemma \ref{lm:log-universality-smooth}. Let $\bar{K}(x_{ik})_{ik}:=K(x_{ik}+\frac{1}{2}\log V(w_{ik}))_{ik}$. Then, $\bar{K}$ still satisfies \eqref{eq:derivativebound} and we can reduce the problem to showing that
 	$$
 	\left|\E \bar{K}\left(\log\frac{|P(w_{ik})|}{\sqrt{V(w_{ik})}}\right)_{ik}-\E \bar{K}\left(\log\frac{|\tilde{P}(w_{ik})|}{\sqrt{V(w_{ik})}}\right)_{ik}\right|=O(\delta_0^{\alpha}).
 	$$
 	Ideally, we would like to set $L(z_{ik})_{ik}:= \bar K(\log |z_{ik}|)_{ik}$ and apply Lemma \ref{lm:log-universality-smooth} for this function $L$. However, the singularity of the log function at 0 prevents $L$ from satisfying \eqref{eq:derivativebound}. To handle this difficulty, we split the space of $(\log|z_{ik}|)_{ik}$ into two regions $\Omega_1$ and $\Omega_2$ where $\Omega_1$ is the image of the log function around 0 and show that the contribution from $\Omega_1$ is insignificant. On $\Omega_2$, the log function is well-behaved and we can then apply Lemma \ref{lm:log-universality-smooth} there.
 	
 	More specifically, for $M_{i}:=\log\left (\delta_{i}^{-12\alpha}\right )$, let 
 	$$\Omega_{1}=\{ (x_{ik})_{ik}\in \mathbb{R}^{m_{1}+\cdots+m_{M}} : x_{ik}\le-M_{i} \text{ for some } i,k\}$$ 
 	and
 	$$\Omega_{2}=\{(x_{ik})_{ik}\in \mathbb{R}^{m_{1}+\cdots+m_{M}} : x_{ik}\ge-M_{i}-1 \text{ for all }i,k\}.$$
 	
 	
 	Let $\psi : \mathbb{R}^{m_{1}+\cdots+m_{M}}\rightarrow[0, 1]$ be a smooth function taking values in $[0, 1]$ such that $\psi$ is supported in $\Omega_{2}$, $\psi=1$ on the complement of $\Omega_{1}$ and $\Vert\partial^{a}\psi\Vert_{\infty}=O(1)$ for all $0\leq a\leq 3$. Put $\phi:=1-\psi,  K_{1}:=\bar{K}.\phi$, and $ K_{2}:=\bar{K}.\psi$. We have $\bar{K}=K_{1}+K_{2}$ and both $K_{1}, K_{2}$ satisfy \eqref{eq:derivativebound} with $\supp K_1 \subset {\Omega}_{1}$, $\supp K_2\subset {\Omega}_{2}.$
 	
 	We now show that the contribution from $K_{1}$ is negligible. Set $\tilde{K}_{1}:=\Vert \bar{K}\Vert_{\infty}\phi$ and $$L_{1}(z_{ik})_{ik}:=\tilde{K}_{1}(\log|z_{ik}|)_{ik}.$$ 
 	Since $\Vert K_{1}\Vert_{\infty}\leq\Vert \bar{K}\Vert_{\infty}\ll 1$,  we observe that $L_{1}$ satisfies
 	\begin{itemize}
 		\item $|K_{1}(\log|z_{ik}|)_{ik}|\leq L_{1}(z_{ik})_{ik}$,
 		\item $\supp(L_{1})\subset \{ (z_{ik})_{ik}\in \mathbb{C}^{m_{1}+\cdots+m_{M}} : |z_{ik}|\leq e^{-M_{i}} \text{ for some } i,k\}$,
 		\item $L_1$ is constant on $ \{ (z_{ik})_{ik}\in \mathbb{C}^{m_{1}+\cdots+m_{M}} : |z_{ik}|\le e^{-M_{i}-1} \text{ for some } i,k\}$,
 		\item $L_{1}$ satisfies \eqref{eq:derivativebound} (with the power $ 2\alpha$ being replaced by $ 14\alpha$ but that doesn't affect the argument).
 	\end{itemize}
 	
 	Choose $\alpha_{0}$ to be small enough such that $C \alpha_{0}$ is at most the constant $\alpha_{0}$ in Lemma \ref{lm:log-universality-smooth} where $C $ is some sufficiently large absolute constant. Applying Lemma \ref{lm:log-universality-smooth}, we get
 	$$
 	\E \left|K_{1}\left(\log\frac{|P(w_{ik})_{ik}|}{\sqrt{V(w_{ik})_{ik}}}\right)\right| \leq \E L_{1}\left(\frac{P(w_{ik})}{\sqrt{V(w_{ik})}}\right)_{ik}\leq \E L_{1}\left(\frac{\tilde{P}(w_{ik})}{\sqrt{V(w_{ik})}}\right)_{ik}+O\left (\delta_0^{C \alpha}\right ).
 	$$
 	
 	Since the variables $\tilde{\xi}_{i}$ are Gaussian, we have
 	\begin{align*}
 	\E L_{1}\left(\frac{\tilde{P}(w_{ik})}{\sqrt{V(w_{ik})}}\right)_{ik} &\quad\ll \quad  \P \left(\exists ik: \frac{|\tilde{P}(w_{ik})|}{\sqrt{V(w_{ik})}}\leq e^{-M_{i}}\right) \ll \quad  \sum_{i=1}^{M}m_{i}\delta_{i}^{12\alpha} \ll \delta_0^{\alpha}.
 	\end{align*}
 	
 	Thus, $  \E \left|K_{1}\left (\log\frac{|P(w_{ik})|}{\sqrt{V(w_{ik})}}\right )_{ik}\right|\ll \delta_0^{\alpha}$. Finally, we will show that
 	$$
 	\left|\E K_{2} \left(\log\frac{|P(z_{1})|}{\sqrt{V(z_{1})}}, . . . , \log\frac{| P(z_{m})|}{\sqrt{V(z_{m})}}\right)-\E K_{2}\left(\log\frac{|\tilde P(z_{1})|}{\sqrt{V(z_{1})}}, \ldots , \log\frac{|\tilde P(z_{m})|}{\sqrt{V(z_{m})}}\right)\right|\ll \delta^{\alpha}_{0}.
 	$$
 	
 	Define $L_{2} : \mathbb{C}^{m_{1}+\cdots+m_{M}}\rightarrow \mathbb{R}$ by $L_{2}(z_{ik})=K_{2}(\log|z_{ik}|)$. Since $\supp  K_2 \subset {\Omega}_{2}$, $$\supp  L_2\subset \{ (z_{ik})_{ik}: |z_{ik}|\ge e^{-M_{i}-1}\gg  \delta_{i}^{12\alpha} \text{ for all } i,k\}.$$ 
 	Thus, $L_{2}$ is well-defined and  satisfies \eqref{eq:derivativebound} (with the power $2\alpha$ being replaced by $14\alpha$). Applying Lemma \ref{lm:log-universality-smooth} gives
 	\begin{align*}
 	& \E K_{2}\left(\log\frac{|P(w_{ik})|}{\sqrt{V(w_{ik})}}\right)_{ik}-\E K_{2}\left(\log\frac{|\tilde P(w_{ik})|}{\sqrt{V(w_{ik})}}\right)_{ik} \\
 	&\quad =\quad  \E L_{2}\left(\frac{|P(w_{ik})|}{\sqrt{V(w_{ik})}}\right)_{ik}-\E L_{2}\left(\frac{|\tilde P(w_{ik})|}{\sqrt{V(w_{ik})}}\right)_{ik} \ll \delta_0^{\alpha} .
 	\end{align*}
 	This completes the proof of Lemma \ref{lm:log-universality-general}.
 \qed

\subsection{Proof of \eqref{varbound}} \label{app:proof:varbound}
In this section, we prove \eqref{varbound}, namely, for a sufficiently large constant $C$, we have
\begin{equation}\label{varbound:2}
V(x) := \sum_{i=0}^{n}c_{i}^{2}x^{2i}=\frac{\Theta(1)}{(1-x+1/n)^{2\rho+1}}\quad \forall x\in(1-1/C, 1).
\end{equation}
To this end, we will repeatedly use \eqref{cond_ci_rho} and the assumption that $\rho>-1/2$.

If $x\ge 1-\frac{1}{n}$, we have 
\begin{equation}
V(x)\le \sum_{i=0}^{n}c_{i}^{2}\ll \sum_{i=0}^{N_0} 1 + \sum_{i=0}^{n}i^{2\rho} \ll n^{2\rho+1}\nonumber.
\end{equation}
For the lower bound, we have $x^{2i}\ge \left (1-\frac{1}{n}\right )^{2n}\gg 1$ and so
\begin{equation}
V(x)\gg \sum_{i=0}^{n}c_{i}^{2} \gg \sum_{i=N_0}^{n}i^{2\rho} \gg n^{2\rho+1}\nonumber.
\end{equation}
These bounds prove \eqref{varbound:2} for $x\ge 1-\frac{1}{n}$.

If $1-\frac{1}{n}<x<1-\frac{1}{C}$, letting $L = \frac{1}{1-x} \in (C, n)$, we have $\frac{1}{(1-x+1/n)^{2\rho+1}} =\Theta\left (L^{2\rho+1}\right )$ and
\begin{equation}
V(x)\gg \sum_{i=L}^{2L}c_{i}^{2} x^{2L} \gg \sum_{i=L}^{2L} i^{2\rho}  \gg L^{2\rho+1}\nonumber.
\end{equation}
As for the upper bound, we have for any constant $C'$,
\begin{equation}\label{eq:varbound:L:1}
V(x)\ll \sum_{i=0}^{N_0} 1+ \sum_{i=N_0}^{\infty} i^{2\rho} x^{2i}\ll 1 + \sum_{i=0}^{C'L}i^{2\rho} x^{2i} + \sum_{i=C'L}^\infty i^{2\rho} x^{2i}\ll L^{2\rho+1}+ \sum_{i=C'L}^\infty i^{2\rho} x^{2i}.
\end{equation}
Since $x=1-\frac{1}{L} \le e^{-1/L}$, the right-most sum is at most
\begin{equation}\label{eq:varbound:L:2}
\sum_{i=C'L}^\infty i^{2\rho} x^{2i} \le \sum_{i=C'L}^\infty i^{2\rho} e^{-2i/L} = L^{2\rho} \sum_{i=C'L}^\infty \left (\frac{i}{L}\right )^{2\rho}  e^{-2i/L}.
\end{equation}
By choosing $C'$ sufficiently large (depending only on $\rho$) such that the function $t\to t^{2\rho} e^{-2t}$ is decreasing on $(C'-1, \infty)$, we have
\begin{equation}\label{eq:varbound:L:3}
\sum_{i=C'L}^\infty \left (\frac{i}{L}\right )^{2\rho}  e^{-2i/L}\le \int_{C'L-1}^{\infty} \left (\frac{s}{L}\right )^{2\rho}  e^{-2s/L}ds = L\int_{C'-1/L}^{\infty} t^{2\rho} e^{-2t} dt\ll L.
\end{equation}
Plugging this into \eqref{eq:varbound:L:1} and \eqref{eq:varbound:L:2}, we obtain $V(x)\ll L^{2\rho+1}$ which is the desired upper bound.
\qed

\subsection{Proof of \eqref{eq:1-r:rho} and \eqref{boundvariance}}\label{proof:boundavariance}
\begin{proof} [Proof of \eqref{eq:1-r:rho}]
We need to show that
\begin{equation}\label{eq:1-r:rho:2}
\sum_{0\le i<k\le n} c_i^{2}c_k^{2} (x^{i}y^{k} - x^{k}y^{i})^{2} = \Theta\left (\sum_{0\le i<k\le n} c_{i, \rho}^{2}c_{k, \rho}^{2} (x^{i}y^{k} - x^{k}y^{i})^{2}\right ).
\end{equation}
	By Condition \eqref{cond_ci_rho}, $c_i\ll c_{i, \rho}$ for all $i\ge 0$, and so the left-hand side of \eqref{eq:1-r:rho:2} is at most the order of the right-hand side. To prove the reverse, by Condition \eqref{cond_ci_rho}, $c_i\gg c_{i, \rho}$ for all $i\ge N_0$, so 
	\begin{equation}
	\sum_{0\le i<k\le n} c_i^{2}c_k^{2} (x^{i}y^{k} - x^{k}y^{i})^{2} \ge \sum_{N_0\le i<k\le n} c_i^{2}c_k^{2} (x^{i}y^{k} - x^{k}y^{i})^{2} \gg \sum_{N_0\le i<k\le n} c_{i, \rho}^{2}c_{k, \rho}^{2} (x^{i}y^{k} - x^{k}y^{i})^{2} \nonumber.
	\end{equation}
	Thus, it remains to show that the remaining terms on the right-hand side of \eqref{eq:1-r:rho:2} are of smaller order, namely, for all $1\le i<N_0$,
	\begin{equation}\label{eq:1-r:rho:3}
	\sum_{k=0}^{n} c_{i, \rho}^{2}c_{k, \rho}^{2} (x^{i}y^{k} - x^{k}y^{i})^{2} \ll \sum_{N_0\le j\le n}\sum_{N_0\le k\le n} c_{j, \rho}^{2}c_{k, \rho}^{2} (x^{j}y^{k} - x^{k}y^{j})^{2}.
	\end{equation}
	Since $N_0$ is a constant, $c_{k, \rho} =\Theta (c_{k+N_0, \rho})$ for all $k\ge 0$ and since $xy = \Theta(1)$, we have for $j' = i+N_0$,
		\begin{eqnarray} \label{eq:1-r:rho:4}
	\sum_{k=0}^{n} c_{i, \rho}^{2}c_{k, \rho}^{2} (x^{i}y^{k} - x^{k}y^{i})^{2} &\ll& \sum_{k=0}^{n}  c_{j', \rho}^{2}c_{k+N_0, \rho}^{2} (x^{j'}y^{k+N_0} - x^{k+N_0}y^{j'})^{2}\nonumber\\
	&=& \sum_{k=N_0}^{n+N_0}  c_{j', \rho}^{2}c_{k, \rho}^{2} (x^{j'}y^{k} - x^{k}y^{j'})^{2}.
	\end{eqnarray}
	Assume without loss of generality that $x<y$. Using the simple observation that 
	$$0\le y^{j+1} - x^{j+1} \le 2(y^{j} - x^{j})\quad\forall j\ge 1,$$
	we have
	\begin{equation}
	\sum_{k=n+1}^{n+N_0}  c_{j', \rho}^{2}c_{k, \rho}^{2} (x^{j'}y^{k} - x^{k}y^{j'})^{2}\ll \sum_{k=n+1-N_0}^{n}  c_{j', \rho}^{2}c_{k, \rho}^{2} (x^{j'}y^{k} - x^{k}y^{j'})^{2}.\nonumber
	\end{equation}
	And so, the right-most side of \eqref{eq:1-r:rho:4} is of order at most the right-most side of \eqref{eq:1-r:rho:3}, proving \eqref{eq:1-r:rho:2}.
\end{proof}
\begin{proof}[Proof of \eqref{boundvariance}]
	We want to show that for every $x\in[1-a_n, 1-b_n]$,  
\begin{equation} 
\sum_{k=0}^{n} c_{k, \rho}^{2} x^{2k}=  \frac{1+O(\ep_0)}{(1-x^{2})^{2\rho+1}}.\nonumber
\end{equation}
where $ \varepsilon_0=\exp\left (-(\log\log n)^{1+2\varepsilon}\right )$. 
By Taylor's expansion, we have
 $$S: = \sum_{k=0}^{\infty} c_{k, \rho}^{2} x^{2k}=  \frac{1}{(1-x^{2})^{2\rho+1}}.$$
 Thus, it suffices to show that 
 $$\sum_{k=n+1}^{\infty} c_{k, \rho}^{2} x^{2k}\ll \ep_0 S.$$
 We have 
 \begin{eqnarray}
 \sum_{k=n+1}^{\infty} c_{k, \rho}^{2} x^{2k}  = x^{2n+2}\sum_{k=0}^{\infty} \frac{c_{n+1+k, \rho}^{2}}{c_{k, \rho}^{2}} c_{k, \rho}^{2} x^{2k}\nonumber
 \end{eqnarray}
and so, it is left to verify that for all $k\ge 0$, 
\begin{equation} 
x^{2n}\frac{c_{n+1+k, \rho}^{2}}{c_{k, \rho}^{2}}\ll \ep_0.\nonumber
\end{equation}
Indeed, we have
\begin{eqnarray}
x^{2n}\frac{c_{n+1+k, \rho}^{2}}{c_{k, \rho}^{2}} &=& x^{2n} \prod_{i=1}^{n+1}\frac{2\rho + k+i}{k+i}\le  x^{2n} \prod_{i=1}^{n+1}\frac{2\rho + i}{i} \le  x^{2n} \prod_{i=1}^{n+1}\frac{2\left[\rho\right] + i+1}{i}   \nonumber\\
&=& x^{2n} \frac{(n+2)\dots (n+2+2\left[\rho\right] )}{(2\left[\rho\right] +1)!}\ll x^{2n}n ^{2\rho+1}.\nonumber
\end{eqnarray}
Using $x\le 1-b_n\le 1 - \frac{(\log n)^{2}}{n}$ by the assumption \eqref{eq:cond:ab:CLT}, we obtain
$$x^{2n}\frac{c_{n+1+k, \rho}^{2}}{c_{k, \rho}^{2}} \ll  \left (1 - \frac{(\log n)^{2}}{n}\right )^{2n}n ^{2\rho+1}\ll \exp\left (-2(\log n)^{2}\right )n ^{2\rho+1}\ll \ep_0 .$$
\end{proof}

\subsection{Proof of \eqref{eq:corPR:multipleroot}}\label{app:corPR:multipleroot}
	Let $(c, d):= (a_j, a_{j+1})$. We want to show that for any interval $(c, d)\subset [1-1/C, 1]$ with $d-c \le D^{-1/2}$,
	\begin{equation}\label{eq:corPR:multipleroot:1}
	\P\left (\int_{c}^{d} \int_{c}^{d} |P_n''(u)| dudt\ge D^{p-1}\right )\ll D^{-1}.
	\end{equation}
	Let $I$ denote the above double integral. By Markov's inequality and H\H{o}lder's inequality, for a large constant $h$ to be chosen, we have
	$$\P(I\ge D^{p-1})\le D^{(1-p)h}\E I^{h}\ll D^{(1-p)h} (d-c)^{2(h-1)}\E \int_{c}^{d} \int_{c}^{d} |P_n''(u)|^{h} dudt$$
	and so, 
	$$\P(I\ge D^{p-1})\ll D^{(1-p)h} (d-c)^{2h}\max_{u\in [c, d]} \E |P_n''(u)|^{h} \le D^{-ph}\max_{u\in [c, d]} \E |P_n''(u)|^{h}.$$
	Since $P''(u)$ is a Gaussian random variable, by the hypercontractivity of Gaussian distribution, we have
	$$\E |P_n''(u)|^{h}\ll \left (\E |P_n''(u)|^{2}\right )^{\frac{2+h}{2}}\ll n^{O(1)}.$$ 
	Thus, by choosing $h = 2/p$,
	$$\P(I\ge D^{p-1})\ll D^{-ph} n^{O(1)} \ll D^{-1}$$
	where we used $D = \exp\left (a_n^{-1}/100\right ) \gg n^{C}$ for any constant $C$ as $a_n$ satisfies Condition \eqref{cond-a-log}.
	\qed
	
\subsection{Proof of \eqref{eq:N2:cov}}\label{app:N2:cov}	
We have
\begin{eqnarray}
\Cov \left (N^{(2)}_{P_n}  \left(a, b\right ) , N^{(2)}_{R_n}  \left(c, d\right ) \right )=\E N^{(2)}_{P_n}  \left(a, b\right ) N^{(2)}_{R_n}  \left(c, d\right ) - \E N^{(2)}_{P_n}  \left(a, b\right )\cdot \E N^{(2)}_{R_n}  \left(c, d\right ) \nonumber.
\end{eqnarray}
Thus, we have by definition of $N^{(2)}$ that
\begin{eqnarray} 
 &&\Cov \left (N^{(2)}_{P_n}  \left(a, b\right ) , N^{(2)}_{R_n}  \left(c, d\right ) \right ) =  \frac{1}{4\pi^{4}}\int_{a}^{b}\int_{c}^{d}\int_{\gamma}^{D^{2}}\int_{\gamma}^{D^{2}}\int_{-D}^{D}\int_{-D}^{D} \frac{1}{u^{2}v^{2}}\cdot \label{eq:cov:N2:1}\\
 && \qquad\qquad \qquad \cdot\left [\E\mathcal F_1(x, u, s) \mathcal F_2 (y, v, t) -\E\mathcal F_1(x, u, s) \E \mathcal F_2 (y, v, t) \right ]dtdsdvdudydx \nonumber
\end{eqnarray}
where 
\begin{eqnarray}
\mathcal F_1(x, u, s) :=  \left (1 - \cos\left (u P_n'(x)\right )\right )\cos(sP_n(x)) ,    \mathcal F_2 (y, v, t) := \left (1 - \cos\left (v R_n'(y)\right )\right )\cos(tR_n(y)).\nonumber
\end{eqnarray}
Note that we can use Fubini's theorem in the above calculation because the integrands are absolutely integrable.

We have
\begin{eqnarray}
 \mathcal F_1(x, u, s) \mathcal F_2 (y, v, t) &=&  \left (1 - \cos\left (u P_n'(x)\right )\right )\cos(sP_n(x))  \left (1 - \cos\left (v R_n'(y)\right )\right )\cos(tR_n(y))\nonumber\\
&=& \cos(sP_n(x))  \cos(tR_n(y)) -  \cos\left (u P_n'(x)\right )\cos(sP_n(x)) \cos(tR_n(y)) \nonumber\\
&&-\cos\left (v R_n'(y)\right )\cos(sP_n(x)) \cos(tR_n(y)) \nonumber\\
&&+ \cos\left (u P_n'(x)\right )\cos\left (v R_n'(y)\right )\cos(sP_n(x)) \cos(tR_n(y))\nonumber
\end{eqnarray}
and so
\begin{eqnarray}
\mathcal F_1(x, u, s) \mathcal F_2 (y, v, t) &=& \frac{1}{2} \sum \cos\left (sP_n(x) \pm tR_n(y) \right ) - \frac{1}{4} \sum \cos\left (u P_n'(x)\pm sP_n(x) \pm tR_n(y) \right )  \nonumber\\
&&- \frac{1}{4} \sum \cos\left (v R_n'(y)\pm sP_n(x) \pm tR_n(y) \right ) \nonumber\\
&&+ \frac{1}{8} \sum \cos\left (u P_n'(x)\pm v R_n'(y)\pm sP_n(x) \pm tR_n(y) \right ) \nonumber.
\end{eqnarray}

We recall that the random variables $\xi_i$ are iid standard Gaussian and for a standard Gaussian random variable $Z$ and any real number $a$, $\E \cos(aZ) = E e^{iaZ} = e^{-a^{2}/2} = \exp\left (-\frac{1}{2}\Var (aZ)\right )$. 
Thus, 
\begin{eqnarray}
&&\E \mathcal F_1(x, u, s) \mathcal F_2 (y, v, t) =   \frac{1}{2} \sum \exp\left (-\frac{1}{2}\Var \left (sP_n(x) \pm tR_n(y) \right ) \right )\nonumber\\
&&- \frac{1}{4} \sum \exp\left (-\frac{1}{2}\Var\left (u P_n'(x)\pm sP_n(x) \pm tR_n(y) \right )\right )  \nonumber\\
&& - \frac{1}{4} \sum \exp\left (-\frac{1}{2}\Var\left (v R_n'(y)\pm sP_n(x) \pm tR_n(y) \right )\right )\nonumber\\
&& + \frac{1}{8} \sum \exp\left (-\frac{1}{2}\Var\left (u P_n'(x)\pm v R_n'(y)\pm sP_n(x) \pm tR_n(y) \right ) \right )\nonumber.
\end{eqnarray}

Similarly, 
\begin{eqnarray}
&&\E \mathcal F_1(x, u, s)=   \exp\left (-\frac{1}{2}\Var \left (sP_n(x) \right ) \right ) - \frac{1}{2} \sum \exp\left (-\frac{1}{2}\Var\left (u P_n'(x)\pm sP_n(x)  \right )\right )  \nonumber
\end{eqnarray}
and 
\begin{eqnarray}
&&\E \mathcal F_2(y, v, t)=   \exp\left (-\frac{1}{2}\Var \left (tR_n(y) \right ) \right ) - \frac{1}{2} \sum \exp\left (-\frac{1}{2}\Var\left (v R_n'(y)\pm tR_n(y)  \right )\right )  \nonumber.
\end{eqnarray}
Plugging these formulas into \eqref{eq:cov:N2:1}, we obtain \eqref{eq:N2:cov}.
\qed

\end{document}